\title{\Large\bf 
A generalization of quantum Lakshmibai-Seshadri paths for arbitrary weights%
\footnote{Key words and phrases: quantum LS paths, quantum alcove model, level-zero Demazure module, semi-infinite flag manifold.
\newline
2020 Mathematics Subject Classification. Primary 05E10; Secondary 14N15, 14M15.}%
}
\author{%
Takafumi Kouno \\
 \small Waseda Research Institute for Science and Engineering, Waseda University, \\
 \small 3-4-1 Okubo, Shinjuku-ku, Tokyo 169-8555, Japan \\
 \small (e-mail: {\tt t.kouno@aoni.waseda.jp}) \\[5mm]
Satoshi Naito \\ 
 \small Department of Mathematics, Tokyo Institute of Technology, \\
 \small 2-12-1 Oh-okayama, Meguro-ku, Tokyo 152-8551, Japan \\
 \small (e-mail: {\tt naito@math.titech.ac.jp}) \\[5mm]
}
\date{}
\numberwithin{equation}{section}
\theoremstyle{plain}
\newtheorem{theorem}{Theorem}[section]
\newtheorem{itheorem}{Theorem}
\newtheorem*{theorem*}{Theorem}
\newtheorem{lemma}[theorem]{Lemma}
\newtheorem{proposition}[theorem]{Proposition}
\newtheorem{corollary}[theorem]{Corollary}
\theoremstyle{remark}
\newtheorem{remark}[theorem]{Remark}
\newtheorem{example}[theorem]{Example}
\theoremstyle{definition}
\newtheorem{definition}[theorem]{Definition}
\newcommand{\BZ}{\mathbb{Z}}
\newcommand{\BQ}{\mathbb{Q}}
\newcommand{\BR}{\mathbb{R}}
\newcommand{\BC}{\mathbb{C}}
\newcommand{\CA}{\mathcal{A}}
\newcommand{\CS}{\mathcal{S}}
\newcommand{\Fg}{\mathfrak{g}}
\newcommand{\Fh}{\mathfrak{h}}
\newcommand{\sL}{\mathsf{L}}
\newcommand{\q}{\mathsf{q}}
\newcommand{\bp}{\mathbf{p}}
\newcommand{\bchi}{\bm{\chi}}
\newcommand{\af}{\mathrm{af}}
\newcommand{\ex}{\mathrm{ex}}
\newcommand{\Waf}{W_{\af}}
\newcommand{\Wex}{W_{\ex}}
\newcommand{\vtl}{\vartriangleleft}
\newcommand{\vtr}{\vartriangleright}
\newcommand{\vpi}{\varpi}
\newcommand{\Dp}[1]{\Delta^{+}(#1)_{>0}}
\newcommand{\De}[1]{\Delta^{+}(#1)_{=0}}
\newcommand{\Dn}[1]{\Delta^{+}(#1)_{<0}}
\newcommand{\Daf}[1]{\Delta^{\vee, +}_{\af} \cap t_{#1}^{-1} \Delta^{\vee, -}_{\af}}
\newcommand{\wti}[1]{\widetilde{#1}}
\newcommand{\tXi}{\widetilde{\Xi}}
\newcommand{\RO}{\mathcal{RO}}
\newcommand{\Par}{\mathrm{Par}}
\newcommand{\bPar}{\overline{\Par}}
\newcommand{\bra}[1]{[\![ #1 ]\!]}
\newcommand{\pra}[1]{(\!( #1 )\!)}
\newcommand{\pair}[2]{\langle #1, #2 \rangle}
\DeclareMathOperator{\Hom}{Hom}
\DeclareMathOperator{\QBG}{QBG}
\DeclareMathOperator{\BG}{BG}
\DeclareMathOperator{\QLS}{QLS}
\DeclareMathOperator{\IQLS}{IQLS}
\DeclareMathOperator{\ILS}{ILS}
\DeclareMathOperator{\wt}{wt}
\DeclareMathOperator{\down}{down}
\DeclareMathOperator{\height}{height}
\DeclareMathOperator{\nega}{neg}
\DeclareMathOperator{\ed}{end}
\DeclareMathOperator{\im}{Im}
\DeclareMathOperator{\gch}{gch}
\DeclareMathOperator{\Deg}{Deg}
\DeclareMathOperator{\sgn}{sgn}
\newenvironment{enu}{%
 \begin{enumerate}%
}{\end{enumerate}}
\begin{document}

\maketitle

\begin{abstract}
We construct an injective weight-preserving map (called the forgetful map) from the set of all admissible subsets in the quantum alcove model associated to an arbitrary weight. The image of this forgetful map can be explicitly described by introducing the notion of ``interpolated quantum Lakshmibai-Seshadri (QLS for short) paths'', which can be thought of as a generalization of quantum Lakshmibai-Seshadri paths. 
As an application, we reformulate, in terms of interpolated QLS paths, an identity of Chevalley type for the graded characters of Demazure submodules of a level-zero extremal weight module over a quantum affine algebra, which is a representation-theoretic analog of the Chevalley formula for the torus-equivariant $K$-group of a semi-infinite flag manifold. 
\end{abstract}

\section{Introduction}

In \cite{LS}, the authors constructed a weight-preserving bijection from the set of all admissible subsets in the alcove model associated to the lex (lexicographic) $\lambda$-chain (of roots) onto the set of all Lakshmibai-Seshadri (LS for short) paths of shape $\lambda$ for a dominant weight $\lambda \in P^{+}$; such a bijection was constructed in order to reformulate the Chevalley formula for the torus-equivariant $K$-theory of a Kac-Moody thick flag manifold, originally stated in terms of LS paths, in terms of the alcove model. 

In \cite{LNSSS2}, the authors constructed a weight-preserving bijection (called the forgetful map) from the set of all admissible subsets in the quantum alcove model associated to the lex $\lambda$-chain onto the set of all quantum Lakshmibai-Seshadri (QLS for short) paths of shape $\lambda$ for a dominant weight $\lambda \in P^{+}$; this forgetful map was used in order to establish the equality between the specialization at $t = 0$ of the symmetric Macdonald polynomial $P_{\lambda}(x; q, t)$ and the graded character of a tensor product of ``single-column'' Kirillov-Reshetikhin modules over a quantum affine algebra.

Also, in \cite{NNS}, the authors considered the forgetful map, defined on the set of all admissible (but, with arrows reversed) subsets in the quantum alcove model associated to each reduced expression for the shortest element $m_{\lambda}$ in the coset $t_{\lambda} W \in \Wex^{\vee}/W$ for an arbitrary (not necessarily dominant) weight $\lambda \in P$, where $t_{\lambda}$ denotes the translation element of the extended affine Weyl group $\Wex^{\vee} = W \ltimes P$, with $W$ the finite Weyl group. This forgetful map is injective and weight-preserving, but its image is a subset (which is explicitly described in terms of the final direction of a QLS path) of the set of all QLS paths of shape $\lambda_{+}$, where $\lambda_{+} \in P^{+}$ is the unique dominant weight in the $W$-orbit of $\lambda$; note that if $\lambda = w_{\circ} \lambda_{+}$, with $w_{\circ} \in W$ the longest element, then $m_{\lambda} = t_{\lambda}$, and hence the subset above is identical to the whole of the set of all QLS paths of shape $\lambda_{+}$. 
By using this result, they obtained an explicit relationship between the specialization at $t = \infty$ of the nonsymmetric Macdonald polynomial $E_{w_{\circ} \lambda_{+}}(x; q, t)$ and the graded character of the Demazure submodule $V_{w_{\circ}}^{-}(\lambda_{+})$ of the level-zero extremal weight module $V(\lambda_{+})$ of extremal weight $\lambda_{+} \in P^{+}$ over the quantum affine algebra $U_{\q}(\Fg_{\af})$. 

In this paper, in order to reformulate (a representation-theoretic analog of) the general Chevalley formula, obtained in \cite{LNS}, for the torus-equivariant $K$-group of a semi-infinite flag manifold, we consider an analog of the forgetful map above, defined on the set of all admissible subsets in the quantum alcove model associated to a ``suitable'' (in the sense explained at the end of Section~\ref{subsec:reduced_expression_vs_chain_of_roots} below) reduced expression for the translation element $t_{\lambda} \in \Wex^{\vee}$ for an arbitrary weight $\lambda \in P$. Here we should remark that for a general weight $\lambda \in P$, the translation element $t_{\lambda}$ differs from the shortest element $m_{\lambda}$ in the coset $t_{\lambda} W$. 
Hence the image of our forgetful map (which is still injective and weight-preserving) cannot be described in terms of QLS paths, and we need to introduce the notion of \emph{interpolated QLS paths}, which can be thought of as a generalization of QLS paths (see Remarks~\ref{rem:IQLS=QLS} and \ref{rem:minuscule case}). To be a little more precise, an interpolated QLS path of shape $\lambda \in P$ is a triple consisting of 
\begin{itemize}
\item two sequences of elements of $W$, and 
\item an increasing sequence of rational numbers between $0$ and $1$
\end{itemize}
satisfying several conditions (which are similar to those in the definition of QLS paths); for the precise definition, see Definition~\ref{def:iQLS}. 
Here we remark that in contrast to QLS paths, we need \emph{two} sequences of elements in $W$, while a QLS path is a pair of \emph{one} sequence of elements in $W$ and a sequence of increasing rational numbers between $0$ and $1$. 
Let $\IQLS(\lambda)$ denote the set of all interpolated QLS paths of shape $\lambda \in P$. 

For an arbitrary weight $\lambda \in P$ and $w \in W$, let us take the suitable $\lambda$-chain $\Gamma_{\vtl}(\lambda)$, and let $\CA(w, \Gamma_{\vtl}(\lambda))$ denote the set of all $w$-admissible subsets in the quantum alcove model associated to $\Gamma_{\vtl}(\lambda)$. 

The main results of this paper are the following theorems.

\begin{itheorem}[= Definition~\ref{def:forgetful} + Theorem~\ref{thm:inj_forgetful} + Proposition~\ref{prop:forgetful_wt}]
Let $\lambda \in P$ be an arbitrary weight and $w \in W$. 
There exists an injective weight-preserving map $\tXi: \CA(w, \Gamma_{\vtl}(\lambda)) \rightarrow \IQLS(\lambda) \times W$. 
\end{itheorem}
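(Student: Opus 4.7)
The plan is to construct $\tXi$ block-by-block along the natural decomposition of the suitable $\lambda$-chain $\Gamma_{\vtl}(\lambda)$ coming from a reduced expression for $t_{\lambda}$, and then mimic the ``folding/collapsing'' procedure of \cite{LNSSS2} and \cite{NNS} within each block, with an extra bookkeeping element of $W$ recording the discrepancy between $t_{\lambda}$ and the shortest coset representative $m_{\lambda}$. More precisely, I would first recall that the reduced expression for $t_{\lambda}$ factors (compatibly with the construction of $\Gamma_{\vtl}(\lambda)$ in Section~\ref{subsec:reduced_expression_vs_chain_of_roots}) into consecutive pieces indexed by the fundamental weights appearing in $\lambda$, and that for each piece the associated sub-chain of roots behaves like a ``local $\varpi_{i}$-chain.'' Given $A \in \CA(w, \Gamma_{\vtl}(\lambda))$, restrict $A$ to each piece and take the sequence of Weyl group elements obtained by applying the reflections in $A$ up to (and just past) each piece boundary; the projections of these elements to $W$ yield the two $W$-valued sequences of the putative interpolated QLS path, and the positions of the boundaries, suitably rescaled, give the rational sequence in $[0,1]$.

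Next I would verify that the output lies in $\IQLS(\lambda)$: the chain conditions in the definition of admissible subsets translate into the requisite edges in the quantum Bruhat graph (or its parabolic version) between consecutive Weyl group elements in each of the two sequences, and the bruhat-covering condition at the boundaries yields the interface conditions linking the two sequences. This step is essentially local on each block and reduces, by a parabolic-reduction argument analogous to the one of \cite[\S4]{LNSSS2}, to the dominant-weight case already treated there. The auxiliary datum in $W$ is taken to be the final direction of the lift, which records the shift from $m_{\lambda}$ to $t_{\lambda}$; this is exactly what is lost by projecting to an interpolated QLS path.

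For injectivity, I would construct a candidate inverse by running the procedure backwards: starting from $(\eta, v) \in \IQLS(\lambda) \times W$, reconstruct within each block the unique sequence of reflections in the corresponding sub-chain of $\Gamma_{\vtl}(\lambda)$ that realizes a given pair of adjacent QBG-elements via a shortest directed path; this is the quantum-alcove analogue of the ``shortest path'' lift used in \cite{LNSSS2}, and uniqueness of such lifts along the suitable $\lambda$-chain is what forces injectivity. The bookkeeping element $v$ determines the initial direction needed to stitch the blocks together, so that no ambiguity remains. Weight-preservation then follows from a direct comparison: the weight of an admissible subset in the quantum alcove model (sum of signed coroots applied to $\lambda$, as in the definition of $\wt$) rewrites, block-by-block, into the weight of the associated interpolated QLS path via the same telescoping computation as in \cite[Proposition]{NNS}.

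The main obstacle I anticipate is correctly formulating the interface conditions between consecutive blocks so that the two $W$-sequences of the interpolated QLS path truly encode the difference between $t_{\lambda}$ and $m_{\lambda}$ in a way that is invertible. In particular, showing that the ``shortest-path'' lift respects the edge-labels of $\Gamma_{\vtl}(\lambda)$ (rather than of the lex chain) requires a careful analysis of how the quantum alcove model transforms under the passage from the lex $\lambda$-chain of \cite{LNSSS2} to the suitable chain $\Gamma_{\vtl}(\lambda)$ used here; this is precisely where the ``suitability'' of $\Gamma_{\vtl}(\lambda)$ must be exploited, and is likely the technical heart of the argument.
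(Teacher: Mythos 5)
Your overall architecture (forget to a path-model datum, keep $\ed(A)$ as an extra $W$-component, invert via uniqueness of label-increasing lifts, telescope for the weight) matches the paper's, but the core of your construction is wrong in a way that would not produce a well-defined map into $\IQLS(\lambda)$. The suitable chain $\Gamma_{\vtl}(\lambda)$ is \emph{not} a concatenation of ``local $\varpi_{i}$-chains'' indexed by the fundamental weights appearing in $\lambda$: for non-dominant $\lambda$ the factorization $t_{\lambda}=\prod_i t_{\varpi_i}^{m_i}$ is not length-additive, so no such piece structure exists inside a reduced expression, and this is precisely why the paper abandons the lex-chain philosophy of \cite{LNSSS2}. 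The correct block decomposition is by the level sets of the statistic $d_{j}=\deg((\beta_{j}^{\sL})^{\vee})/\pair{\lambda}{\overline{(\beta_{j}^{\sL})^{\vee}}}$, which the affine reflection order $<$ on $\Daf{\lambda}$ (built from $\Phi$ and a reflection order in $\RO(\lambda,w(\lambda)\Delta^{+})$) forces to be weakly increasing along the chain; the rational numbers of the interpolated QLS path are the values $1-d_{j_{m_a}}$, not ``positions of the boundaries, suitably rescaled,'' and it is the integrality $\sigma\pair{\lambda}{\gamma^{\vee}}\in\BZ$ coming from this ratio that makes $\Xi(A)$ satisfy Definition~\ref{def:integrality}. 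Likewise, your recipe ``elements up to and just past each piece boundary'' does not produce the second sequence $\underline{y}$: in the paper the $y$-entries are the elements $u_{n_a}$ at the \emph{internal} switch point of each constant-$d$ block, where the labels pass from $-\Dn{\lambda}$ to $\Dp{\lambda}$ (this ordering within a block is exactly what the condition defining $\RO(\lambda,w(\lambda)\Delta^{+})$ guarantees). Since the two-sequence structure is the whole novelty of interpolated QLS paths, a construction that does not isolate these switch points cannot land in $\IQLS(\lambda)$, and your proposed ``parabolic reduction to the dominant case already treated in \cite{LNSSS2}'' cannot recover it either, because in the dominant case the $(\lambda,-)$-part is trivial and there is nothing to reduce to.

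Finally, you explicitly defer the analysis of how the passage to the suitable chain interacts with the edge labels, but that is not a peripheral technicality: it is the content of Lemma~\ref{lem:reflection_coroot}, Proposition~\ref{prop:bij_RO}, and the proposition showing that $<$ is an affine reflection order, and without it neither the monotonicity of the $d_{j}$ nor the $-\Dn{\lambda}$-then-$\Dp{\lambda}$ pattern inside blocks is available, so the map, its injectivity (which in the paper rests on the shellability Theorem~\ref{thm:shellability} applied to label-increasing, not merely shortest, paths), and the weight computation all remain unproved in your outline.
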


Notice that in the theorem above, for the injectivity assertion, we need to modify the forgetful map by taking the direct product $\IQLS(\lambda) \times W$ instead of $\IQLS(\lambda)$ as its range. 

As for the image of the forgetful map $\tXi$, we have the following explicit description; $\iota(\eta) \in W$ and $\kappa(\eta) \in W$ are the initial and final directions of $\eta \in \IQLS(\lambda)$, respectively (see Definition~\ref{def:integrality} for the notation $\xRightarrow{(\lambda, +)}$ and $\xRightarrow{(\lambda, -)}$). 

\begin{itheorem}[= Theorem~\ref{thm:im_forgetful}]
Let $\lambda \in P$ be an arbitrary weight and $w \in W$. 
Then the following equality holds: 
\begin{equation}
\im(\tXi) = \left\{ (\eta, u) \in \IQLS(\lambda) \times W \ \middle| \begin{array}{ll} w \xRightarrow{(\lambda, +)} \kappa(\eta) \\ \iota(\eta) \xRightarrow{(\lambda, -)} u \end{array} \right\}. 
\end{equation}
\end{itheorem}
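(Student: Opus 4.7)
The plan is to prove the two inclusions $\im(\tXi) \subseteq \{\cdots\}$ and $\{\cdots\} \subseteq \im(\tXi)$ separately. The guiding principle in both directions is that admissibility of a subset in the quantum alcove model amounts to the existence of a directed walk in the quantum Bruhat graph $\QBG(W)$ whose edges are indexed by the coroots attached to the positions in $\Gamma_{\vtl}(\lambda)$, and that the relations $\xRightarrow{(\lambda, +)}$ and $\xRightarrow{(\lambda, -)}$ are, by design, the compressed forms of such walks along the ``positive'' and ``negative'' portions of a lex-type $\lambda$-chain.

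For the forward inclusion, I would take $A \in \CA(w, \Gamma_{\vtl}(\lambda))$, set $(\eta, u) = \tXi(A)$, and unwind the construction of $\tXi$. The subset $A$ selects a walk in $\QBG(W)$ whose endpoints and certain intermediate vertices are precisely the data read off by $\tXi$ to produce $\eta$ and $u$. Tracing through the construction, one sees that the restriction of this walk to the ``positive'' part of $\Gamma_{\vtl}(\lambda)$ is a $\QBG$-walk from $w$ to $\kappa(\eta)$, and its restriction to the ``negative'' part is a walk from $\iota(\eta)$ to $u$. Applying the standard tilting/deletion (compression) lemmas for $\QBG$-walks along a lex-subchain then collapses these walks to the required relations $w \xRightarrow{(\lambda, +)} \kappa(\eta)$ and $\iota(\eta) \xRightarrow{(\lambda, -)} u$.

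For the reverse inclusion, given $(\eta, u)$ satisfying the stated conditions, I would construct a preimage $A$ by reversing the forgetful procedure. The fact that $\eta \in \IQLS(\lambda)$ allows one to lift its two sequences of Weyl group elements canonically to a $\QBG$-walk supported on the ``interior'' positions of $\Gamma_{\vtl}(\lambda)$. The hypotheses $w \xRightarrow{(\lambda, +)} \kappa(\eta)$ and $\iota(\eta) \xRightarrow{(\lambda, -)} u$ then supply, by the usual lifting dictionary between the compressed relations and actual $\QBG$-walks along lex-subchains, extensions of this interior walk along the positive and negative extremal parts of $\Gamma_{\vtl}(\lambda)$. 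Collecting the positions used yields a subset $A$, and routine step-by-step checks verify both its $w$-admissibility and the identity $\tXi(A) = (\eta, u)$; uniqueness of the preimage is then automatic from Theorem~\ref{thm:inj_forgetful}.

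The step I expect to be the main obstacle is the coordination of the ``gluing'' in the reverse inclusion. Interpolated QLS paths carry \emph{two} sequences of Weyl group elements, and these must interface correctly with the bipartition of $\Gamma_{\vtl}(\lambda)$ into its positive and negative portions. Verifying that the boundary Weyl elements $\kappa(\eta)$ and $\iota(\eta)$---which mark the transitions between the extremal lifts and the interior walk---admit compatible lifts on both sides of the relevant break is a delicate combinatorial task, and I anticipate it is exactly where the suitability of $\Gamma_{\vtl}(\lambda)$ is invoked. This should amount to a careful application of compression/lifting lemmas for the tilted quantum Bruhat order, building on the techniques of \cite{LNSSS2} and \cite{NNS} but adapted to accommodate the novel two-sequence structure of $\IQLS(\lambda)$.
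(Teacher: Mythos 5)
Your overall skeleton coincides with the paper's: prove the two inclusions, obtaining the forward one by reading off segments of $\bp(A)$ and the reverse one by reconstructing $A$ from the walks underlying $\eta$ together with the two hypotheses. However, the mechanisms you invoke are not the ones that make this work. In the forward direction no tilting/deletion (compression) lemmas are needed or applicable: the relations $\xRightarrow{(\lambda,+)}$ and $\xRightarrow{(\lambda,-)}$ are pure existence statements for label-decreasing paths with labels in $\Dp{\lambda}$, resp.\ $\Dn{\lambda}$ (there is no shortest-path requirement to compress down to), and the relevant segments of $\bp(A)$ already have exactly this form. The actual content, which your sketch does not verify, is the check via the description \eqref{eq:Inv} of $\Daf{\lambda}$ and the definition of the order $<$ through $\Phi$ that the block of positions with $\deg((\beta_{j}^{\sL})^{\vee})/\pair{\lambda}{\overline{(\beta_{j}^{\sL})^{\vee}}}=0$ carries labels in $\Dp{\lambda}$, the block with this ratio equal to $1$ carries labels whose absolute values lie in $\Dn{\lambda}$, and within each block the labels decrease with respect to $\vtl$; once this is done the forward inclusion is immediate, and Deodhar-type compression is a red herring.

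In the reverse direction there is a genuine gap at the step you describe as ``collecting the positions used.'' The walks furnished by the hypotheses and by conditions (2), (3) of Definition~\ref{def:iQLS} live in the finite $\QBG(W)$ with finite root labels; they come with no positions of $\Gamma_{\vtl}(\lambda)$ attached, and producing them is the heart of the proof. The paper's construction is an explicit affine lift: a label $\gamma$ used at level $\sigma_{i}$ is sent to $\gamma^{\vee}+(1-\sigma_{i})\pair{\lambda}{\gamma^{\vee}}\,\wti{\delta}$; the integrality conditions built into the relations at level $\sigma_{i}$ make the degree an integer, \eqref{eq:Inv} shows (treating $\sigma_{i}=1$, $0<\sigma_{i}<1$, $\sigma_{i}=0$ separately) that the lift lies in $\Daf{\lambda}$, and the lexicographic description of $<$ via $\Phi$ shows the lifted coroots occur in strictly increasing order, so they are the positions of a subset $A$ whose $\bp(A)$ is literally the concatenation of the given walks; admissibility and $\tXi(A)=(\eta,u)$ then follow at once. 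This lift-and-order argument, not a compatibility issue at the boundary vertices $\kappa(\eta)$ and $\iota(\eta)$, is where suitability of $\Gamma_{\vtl}(\lambda)$ enters, and it is precisely what your appeal to a ``usual lifting dictionary'' leaves unproved---no such dictionary exists in the literature for $t_{\lambda}$ with $\lambda$ arbitrary; establishing it is the point of this theorem. A minor further point: uniqueness of the preimage is not needed for the image statement, and citing Theorem~\ref{thm:inj_forgetful} here would invert the paper's logical order, since injectivity is deduced afterwards from this very construction together with the shellability property (Theorem~\ref{thm:shellability}).
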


As an application of these results, we can reformulate, in terms of interpolated QLS paths, an \emph{identity of Chevalley type} for the graded characters of Demazure submodules of a level-zero extremal weight module over a quantum affine algebra; 
note that this identity is a representation-theoretic analog of the Chevalley formula for the torus-equivariant $K$-group of a semi-infinite flag manifold, given in \cite{LNS}. 

For $\mu \in P^{+}$, let $V(\mu)$ denote the extremal weight module of extremal weight $\mu$ over the quantum affine algebra $U_{\q}(\Fg_{\af})$ associated to the (untwisted) affine Lie algebra $\Fg_{\af}$ with the underlying finite-dimensional simple Lie algebra $\Fg$. Also, for an element $x$ of the affine Weyl group $\Waf = W \ltimes Q^{\vee}$ of $\Fg_{\af}$, with $Q^{\vee}$ the coroot lattice of $\Fg$, 
let $\gch V_{x}^{-}(\mu)$ denote the graded character of the Demazure submodule $V_{x}^{-}(\mu) \subset V(\mu)$ corresponding to $x$; for the precise notation and definitions, see Sections~\ref{sec:forgetful} and \ref{sec:equivariant}. 

\begin{itheorem}[= Theorem~\ref{thm:gch_Chevalley}]
Let $x \in \Waf$, and write it 
as $x = w t_{\xi}$, with $w \in W$ and $\xi \in Q^{\vee}$. 
Let $\mu \in P^{+}$ and $\lambda \in P$ be such that $\mu + \lambda \in P^{+}$. 
Then we have the following equality:
\begin{equation}
\begin{split}
& \gch V_{x}^{-}(\mu + \lambda) =\\ 
& \sum_{\substack{\eta \in \IQLS(\lambda) \\ w \xRightarrow{(\lambda, +)} \kappa(\eta)}} \sum_{\substack{u \in W \\ \iota(\eta) \xRightarrow{(\lambda, -)} u}} \sum_{\bchi \in \bPar(\lambda)} (-1)^{\nega(\eta) + \ell(u) - \ell(\iota(\eta))} q^{\Deg_{w}(\eta) - \pair{\lambda}{\xi} - |\bchi|} \\ 
& \hspace*{60mm} \times e^{\wt(\eta)} \gch V_{ut_{\xi + \xi(u, \eta, w) + \iota(\bchi)}}^{-}(\mu). 
\end{split}
\end{equation}
\end{itheorem}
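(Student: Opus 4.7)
The plan is to reduce the identity to a Chevalley-type formula already available at the level of the quantum alcove model and then transport it to the language of interpolated QLS paths via the forgetful map of Theorems~A and~B. First, I would combine the semi-infinite Chevalley formula of \cite{LNS} with the standard dictionary between the torus-equivariant $K$-group of the semi-infinite flag manifold and graded characters of level-zero Demazure submodules of $V(\mu)$ in order to obtain an intermediate identity expressing $\gch V_{x}^{-}(\mu+\lambda)$ as a sum over $\CA(w, \Gamma_{\vtl}(\lambda)) \times \bPar(\lambda)$, in which each pair $(\mathbf{A}, \bchi)$ contributes a term of the form
\begin{equation}
(-1)^{\nega(\mathbf{A})}\, q^{\Deg(\mathbf{A}) - \pair{\lambda}{\xi} - |\bchi|}\, e^{\wt(\mathbf{A})}\, \gch V^{-}_{\ed(\mathbf{A})\, t_{\xi + \zeta(\mathbf{A}) + \iota(\bchi)}}(\mu),
\end{equation}
where $\ed(\mathbf{A}) \in W$ is the end of $\mathbf{A}$ and $\zeta(\mathbf{A}) \in Q^{\vee}$ is the quantum coroot attached to $\mathbf{A}$ by the quantum alcove model on the chain $\Gamma_{\vtl}(\lambda)$.

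Next, I would reorganize the sum over $\CA(w, \Gamma_{\vtl}(\lambda))$ along the forgetful map $\tXi: \mathbf{A} \mapsto (\eta, u)$. By Theorem~B the image of $\tXi$ is exactly the set of pairs $(\eta, u) \in \IQLS(\lambda) \times W$ satisfying $w \xRightarrow{(\lambda,+)} \kappa(\eta)$ and $\iota(\eta) \xRightarrow{(\lambda,-)} u$, and by Theorem~A the map is a weight-preserving bijection onto that image. The core task is then to verify, on the level of the remaining statistics, the four identifications
\begin{equation}
\nega(\mathbf{A}) = \nega(\eta) + \ell(u) - \ell(\iota(\eta)), \quad \Deg(\mathbf{A}) = \Deg_{w}(\eta), \quad \ed(\mathbf{A}) = u, \quad \zeta(\mathbf{A}) = \xi(u, \eta, w).
\end{equation}
I would do this by tracking, step by step in the construction of $\tXi$, how each root of $\Gamma_{\vtl}(\lambda)$ is absorbed either into an edge of $\eta$ or into the terminal quantum Bruhat chain recorded by $u$; this is essentially the same bookkeeping used to set up the map in Definition~\ref{def:forgetful}, now carried out one extra statistic at a time.

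I expect the main obstacle to be the last of these four identities, namely the matching of the quantum coroot $\zeta(\mathbf{A})$ with $\xi(u, \eta, w)$. In the shortest-representative case $m_{\lambda} = t_{\lambda}$ treated in \cite{NNS} there is no second Weyl group sequence and $u = \iota(\eta)$, so $\xi(u, \eta, w)$ collapses to the quantum weight used there; but for a general $\lambda \in P$ the chain $\Gamma_{\vtl}(\lambda)$ carries additional ``interpolation'' roots coming from the difference between $t_{\lambda}$ and $m_{\lambda}$, and the down-arrows along these roots must be assigned to the second Weyl group sequence of $\eta$ and to $u$. The definition of $\xi(u, \eta, w)$ is engineered precisely to absorb these extra contributions, and the verification should proceed by induction on the length of a quantum Bruhat chain from $\iota(\eta)$ to $u$, combined with an analysis of how the ordering on $\Gamma_{\vtl}(\lambda)$ interacts with the second sequence of $\eta$. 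Once this identification is in place, the sign and degree formulas follow from the definitions of $\nega$ and $\Deg_{w}$ on $\IQLS(\lambda)$, and substituting everything back into the intermediate identity yields the stated formula for $\gch V_{x}^{-}(\mu+\lambda)$.
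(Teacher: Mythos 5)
Your proposal follows essentially the same route as the paper: your intermediate identity is exactly the quantum-alcove-model Chevalley identity (Theorem~\ref{thm:PC-type_formula}, quoted from \cite{KLN}), which the paper then transports along $\tXi$ via Theorems~\ref{thm:im_forgetful} and \ref{thm:inj_forgetful}, Proposition~\ref{prop:forgetful_wt}, Corollary~\ref{cor:forgetful_n}, and Lemmas~\ref{lem:down_to_xi} and \ref{lem:height_to_deg}, just as you outline (and the coroot matching $\down(A)=\xi(u,\eta,w)$ is immediate from the shellability property, no induction needed). The one small repair is your first identification: in general one only has $n(A)=\nega(\eta)+\ell(\iota(\eta)\Rightarrow u)$ together with $\ell(\iota(\eta)\Rightarrow u)\equiv \ell(u)-\ell(\iota(\eta)) \bmod 2$, not the strict equality you wrote, but the congruence is all the sign requires.
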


One advantage of the QLS path model over the quantum alcove model is that its canonical (affine) crystal structure is much easier to describe than that of the quantum alcove model;
compare \cite[\S 2.3]{LNSSS2} with \cite{LL1} (see also \cite[\S 2.4]{LL2}). 
Hence we expect that it would be possible to equip the set $\IQLS(\lambda)$ for an arbitrary weight $\lambda \in P$ with a ``signed'' crystal structure, and obtain a fairly simple description of it similar to that for QLS paths (cf.~\cite[\S 5.4]{KLN}). 

This paper is organized as follows. In Section~\ref{sec:notation}, we fix basic notation for root systems and recall the definition of the quantum Bruhat graph. Also, we define some specific reflection orders on the set of positive roots. In Section~\ref{sec:quantum_alcove_model}, we review the quantum alcove model. In Section~\ref{sec:def_IQLS}, we introduce the notion of interpolated QLS paths. In Section~\ref{sec:forgetful}, we construct the forgetful map, which is injective and preserves weights. In Section~\ref{sec:equivariant}, we reformulate, in terms of interpolated QLS paths, the identity of Chevalley type for the graded characters of level-zero Demazure submodules. 

The main results of this paper form a part of the doctoral thesis \cite{Kou} of T.K. 

\subsection*{Acknowledgments}
The authors would like to thank Cristian Lenart, Fumihiko Nomoto, and Daisuke Sagaki for valuable discussions on the definition of interpolated quantum Lakshmibai-Seshadri paths. T.K. was partly supported by JSPS Grants-in-Aid for Scientific Research 20J12058, 22J00874, 22KJ2908. S.N. was partly supported by JSPS Grant-in-Aid for Scientific Research (C) 21K03198.

\section{Basic notation and definitions}\label{sec:notation}

First we fix basic notation for root systems and affine root systems. 
Then we recall  the definition of the \emph{quantum Bruhat graph}, introduced in \cite{BFP}. 
Also, we define some specific reflection orders on $\Delta^{+}$. 

\subsection{Notation for root systems}

Let $\Fg$ be a complex simple Lie algebra with Cartan subalgebra $\Fh \subset \Fg$. 
We denote by $\pair{\cdot}{\cdot}$ the canonical pairing of $\Fh$ and $\Fh^{\ast} := \Hom_{\BC}(\Fh, \BC)$. 
Let $\Delta$ be the set of roots of $\Fg$, with $\Delta^{+} \subset \Delta$ the set of positive roots, 
and $\{\alpha_{i}\}_{i \in I}$ the set of simple roots; we denote by $\theta \in \Delta^{+}$ the highest root, and by $\vartheta \in \Delta^{+}$ the highest short root, and set 
$\rho := (1/2) \sum_{\alpha \in \Delta^{+}} \alpha$. 
For $\alpha \in \Delta^{+}$, we define $\sgn(\alpha) \in \{1, -1\}$ and $|\alpha| \in \Delta^{+}$ by 
\begin{align}
\sgn(\alpha) &:= \begin{cases} 1 & \text{if $\alpha \in \Delta^{+}$}, \\ -1 & \text{if $\alpha \in -\Delta^{+}$}, \end{cases} \\ 
|\alpha| &:= \sgn(\alpha) \alpha. 
\end{align}

We set $Q := \sum_{i \in I} \BZ \alpha_{i}$, 
$Q^{\vee} := \sum_{i \in I} \BZ \alpha_{i}^{\vee}$, 
where $\alpha^{\vee}$ is the coroot of $\alpha \in \Delta$, and 
$Q^{\vee, +} := \sum_{i \in I} \BZ_{\ge 0} \alpha_{i}^{\vee}$. 
Let $W = \langle s_{i} \mid i \in I \rangle$ be the Weyl group of $\Fg$, 
with $\ell: W \rightarrow \BZ_{\geq 0}$ the length function and 
$w_{\circ} \in W$ the longest element; here, for $\alpha \in \Delta^{+}$, $s_{\alpha} \in W$ denotes the reflection 
corresponding to $\alpha$, and $s_{i} = s_{\alpha_{i}}$ is the simple reflection for $i \in I$. 
Let $P := \sum_{i \in I} \BZ \vpi_{i}$ be the weight lattice of $\Fg$ and 
$P^{+} := \sum_{i \in I} \BZ_{\ge 0} \vpi_{i}$ the set of dominant weights, 
where $\vpi_{i}$ is the $i$-th fundamental weight for $i \in I$; 
also, we set $\Fh^{\ast}_{\BR} := P \otimes_{\BZ} \BR$.

\subsection{Affine root systems}

Let $\Fg_{\af} := \Fg \otimes_{\BC} \BC[t, t^{-1}] \oplus \BC c \oplus \BC d$ 
be the untwisted affine Lie algebra associated to $\Fg$, 
with Cartan subalgebra $\Fh_{\af} := \Fh \oplus \BC c \oplus \BC d$. 
Let $\Delta_{\af}$ be the set of roots of $\Fg_{\af}$, 
and $\Delta^{+}_{\af} \subset \Delta_{\af}$ the set of positive roots; 
we denote by $\delta \in \Delta_{\af}$ the (primitive) null root. 
We set $I_{\af} := I \sqcup \{ 0 \}$. 
Let $\{ \alpha_{i} \}_{i \in I_{\af}} \subset \Delta_{\af}^{+}$ be the set of simple roots, where $\alpha_{i}$, $i \in I$, are thought of as the simple roots of $\Fg$, and $\alpha_{0} := \delta - \theta$. 
Let $\Waf = \langle s_{i} \mid i \in I_{\af} \rangle$ denote the (affine) Weyl group of $\Fg_{\af}$, 
where $s_{i}$ for $i \in I$ are the simple reflections corresponding to $\alpha_{i}$, and $s_{0} = s_{\theta} t_{- \theta^{\vee}}$; 
we know (cf. \cite[Section~6.5]{Kac}) that $\Waf \cong \{t_{\xi} \mid \xi \in Q^{\vee}\} \rtimes W \simeq Q^{\vee} \rtimes W$, 
where $t_{\xi}$, $\xi \in Q^{\vee}$, denotes the translation by $\xi$. 

Also, let $\Fg^{\vee}$ be the (Langlans) dual Lie algebra of $\Fg$, 
and $\Fg_{\af}^{\vee}$ the untwisted affine Lie algebra associated to 
$\Fg^{\vee}$ with Cartan subalgebra $\Fh_{\af}^{\vee}$. 
Let $\Delta_{\af}^{\vee}$ be the set of roots of $\Fg_{\af}^{\vee}$ and 
$\Delta_{\af}^{\vee, +} \subset \Delta_{\af}^{\vee}$ the set of positive roots, 
with $\Delta_{\af}^{\vee, -} := - \Delta_{\af}^{\vee, +}$ the set of negative roots; 
we denote by $\widetilde{\delta} \in \Delta_{\af}^{\vee}$ the (primitive) null root. 
Each $\beta^{\vee} \in \Delta_{\af}^{\vee}$ is uniquely written as: $\beta^{\vee} = \gamma^{\vee} + k \widetilde{\delta}$ for $\gamma \in \Delta$ and $k \in \BZ$; 
we set $\overline{\beta^{\vee}} := \gamma^{\vee}$, and $\deg(\beta^{\vee}) := k$. 
Let $\Waf^{\vee} = \langle s_{i} \mid i \in I_{\af} \rangle$ denote the (affine) Weyl group of $\Fg_{\af}^{\vee}$, where 
by abuse of notation, we also denote by $s_{i}$ for $i \in I$ the simple reflections corresponding to $\alpha_{i}^{\vee}$, and $s_{0} = s_{\vartheta} t_{- \vartheta}$ is the simple reflection corresponding to the simple coroot $\alpha_{0}^{\vee} := \widetilde{\delta} - \vartheta^{\vee}$; 
we know (cf. \cite[Section~6.5]{Kac}) that $\Waf^{\vee} \cong \{ t_{\xi} \mid \xi \in Q \} \rtimes W \simeq Q \rtimes W$, 
where $t_{\xi}$, $\xi \in Q$, denotes the translation by $\xi$. 
In addition, let $\Wex^{\vee} \cong P \rtimes W$ be the extended affine Weyl group, where 
$t_{\mu} \in \Wex^{\vee}$ denotes the translation by $\mu \in P$. 
We denote by $\Omega^{\vee} \subset \Wex^{\vee}$ the (abelian) subgroup consisting of the elements of length zero; each element of $\Omega^{\vee}$ induces a graph automorphism of the Dynkin diagram of $\Fg_{\af}^{\vee}$. 
Note that we have $\Omega^{\vee} \cong P/Q \cong \Wex^{\vee}/\Waf^{\vee}$ and $\Wex^{\vee} \cong \Omega^{\vee} \ltimes \Waf^{\vee}$. 

\subsection{The quantum Bruhat graph}

\begin{definition}[{\cite[Definition~6.1]{BFP}}]
The \emph{quantum Bruhat graph} $\QBG(W)$ is the $\Delta^{+}$-labeled directed graph 
whose vertices are the elements of $W$ and whose edges are of the following form: 
$x \xrightarrow{\alpha} y$, with $x, y \in W$ and $\alpha \in \Delta^{+}$, such that $y = xs_{\alpha}$, 
and either of the following (B) or (Q) holds: 
\begin{itemize}
\item[(B)] $\ell(y) = \ell(x) + 1$; 
\item[(Q)] $\ell(y) = \ell(x) -2\pair{\rho}{\alpha^{\vee}} + 1$. 
\end{itemize}
If (B) (resp., (Q)) holds, then the edge $x \xrightarrow{\alpha} y$ is called a \emph{Bruhat edge} (resp., \emph{quantum edge}). 
\end{definition}

\begin{definition}[{cf. \cite[Section~2.1]{BB}}]
The \emph{Bruhat graph} $\BG(W)$ is the $\Delta^{+}$-labeled directed graph 
whose vertices are the elements of $W$ and whose edges are of the form $x \xrightarrow{\alpha} y$, with $x, y \in W$ 
and $\alpha \in \Delta^{+}$, such that $y = xs_{\alpha}$ and $\ell(y) = \ell(x) + 1$. 
Namely, the Bruhat graph $\BG(W)$ is the full subgraph of $\QBG(W)$ having only Bruhat edges. 
\end{definition}

Let $\bp: w_{0} \xrightarrow{\beta_{1}} w_{1} \xrightarrow{\beta_{2}} \cdots \xrightarrow{\beta_{r}} w_{r}$ be a directed path in $\QBG(W)$. 
We set 
\begin{align}
\ell(\bp) &:= r, \\ 
\ed(\bp) &:= w_{r}, \\ 
\wt(\bp) &:= \sum_{\substack{k \in \{1, \ldots, r\} \\ w_{k-1} \xrightarrow{\beta_{k}} w_{k} \text{ is a quantum edge}}} \beta_{k}^{\vee}.  
\end{align}

\begin{definition}[{cf. \cite[(2.2)]{D}}]
Let $w \in W$. 
A total order $\vtl$ on $w \Delta^{+}$ is a \emph{reflection order} 
if for all $\alpha, \beta \in w \Delta^{+}$ such that $\alpha + \beta \in w \Delta^{+}$, 
either $\alpha \vtl \alpha+\beta \vtl \beta$ or $\beta \vtl \alpha+\beta \vtl \alpha$ holds. 
\end{definition}

\begin{remark}
Let $w \in W$. If $\vtl$ is a reflection order on $w \Delta^{+}$, then the total order $\vtl^{\ast}$, 
defined by $\alpha \vtl^{\ast} \beta \Leftrightarrow \beta \vtl \alpha$, is also a reflection order on $w \Delta^{+}$. 
\end{remark}

We know from \cite[p.~662, Theorem]{Papi} that the set of reflection orders on $\Delta^{+}$ is in one-to-one correspondence 
with the set of reduced expressions for the longest element $w_{\circ}$ of $W$. 
More precisely, 
for each reduced expression $w_{\circ} = s_{i_{1}} \cdots s_{i_{r}}$, 
the total order $\vtl$ on $\Delta^{+}$, defined by $\alpha_{i_{1}} \vtl s_{i_{1}} \alpha_{i_{2}} \vtl \cdots \vtl s_{i_{1}} \cdots s_{i_{r-1}} \alpha_{i_{r}}$, 
is a reflection order. 
The assignment $(i_{1}, \ldots i_{r}) \mapsto {\vtl}$ gives a desired bijection. 
Using this fact, we can verify the following. 
%
%
\begin{lemma}\label{lem:reflection_coroot}
Take a reflection order $\vtl$ on $\Delta^{+}$. 
Let $\alpha, \beta \in \Delta^{+}$ be such that $\alpha \vtl \beta$. 
If $\alpha^{\vee} + \beta^{\vee} \in \Delta^{\vee, +} = \{ \gamma^{\vee} \mid \gamma \in \Delta^{+} \}$, 
then $\alpha \vtl (\alpha^{\vee} + \beta^{\vee})^{\vee} \vtl \beta$. 
\end{lemma}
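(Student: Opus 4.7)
The plan is to dualize the problem: transport the reflection order from $\Delta^{+}$ to $\Delta^{\vee, +}$ via the bijection $\alpha \mapsto \alpha^{\vee}$, and then apply the defining property of reflection orders on the coroot side.

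First, I would verify that if $\vtl$ is a reflection order on $\Delta^{+}$, then the total order $\vtl^{\vee}$ on $\Delta^{\vee, +}$ defined by $\alpha^{\vee} \vtl^{\vee} \beta^{\vee} \Leftrightarrow \alpha \vtl \beta$ is itself a reflection order. The cleanest route is via the Papi bijection recalled just above the lemma. By that theorem, $\vtl$ comes from some reduced expression $w_{\circ} = s_{i_{1}} \cdots s_{i_{r}}$ of the longest element of $W$, and enumerates $\Delta^{+}$ as $\alpha_{i_{1}}, s_{i_{1}} \alpha_{i_{2}}, \ldots, s_{i_{1}} \cdots s_{i_{r-1}} \alpha_{i_{r}}$. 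Since $W$ acts on $\Fh^{\ast}_{\BR}$ by isometries, one has $(w \alpha)^{\vee} = w (\alpha^{\vee})$ for all $w \in W$ and $\alpha \in \Delta$, so $\vtl^{\vee}$ enumerates $\Delta^{\vee, +}$ as $\alpha_{i_{1}}^{\vee}, s_{i_{1}} \alpha_{i_{2}}^{\vee}, \ldots, s_{i_{1}} \cdots s_{i_{r-1}} \alpha_{i_{r}}^{\vee}$. Since the dual root system $\Delta^{\vee}$ shares its Weyl group $W$ and longest element $w_{\circ}$ with $\Delta$, the Papi bijection applied to $\Delta^{\vee}$ identifies this enumeration as a reflection order on $\Delta^{\vee, +}$.

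With this in hand, the proof becomes immediate. Given $\alpha, \beta \in \Delta^{+}$ with $\alpha \vtl \beta$ and $\alpha^{\vee} + \beta^{\vee} \in \Delta^{\vee, +}$, write $\gamma^{\vee} := \alpha^{\vee} + \beta^{\vee}$, so $\gamma \in \Delta^{+}$. Then $\alpha^{\vee}, \beta^{\vee} \in \Delta^{\vee, +}$ satisfy $\alpha^{\vee} \vtl^{\vee} \beta^{\vee}$ and $\alpha^{\vee} + \beta^{\vee} \in \Delta^{\vee, +}$, so the defining property of the reflection order $\vtl^{\vee}$ (applied on the coroot side) forces $\alpha^{\vee} \vtl^{\vee} \gamma^{\vee} \vtl^{\vee} \beta^{\vee}$. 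Unwinding the definition of $\vtl^{\vee}$ yields $\alpha \vtl \gamma \vtl \beta$, which is exactly the claim.

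The only substantive point is the naturality of the Papi bijection under the duality $\Delta \leftrightarrow \Delta^{\vee}$; this is forced by the identity $(w \alpha)^{\vee} = w \alpha^{\vee}$ together with the fact that $W$ and $w_{\circ}$ are the same on both sides. Accordingly, I do not anticipate any serious obstacle.
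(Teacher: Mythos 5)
Your proof is correct, and it follows the route the paper intends (the paper omits the verification, pointing only to the Papi correspondence): transporting $\vtl$ along $\alpha \mapsto \alpha^{\vee}$ via the identity $(w\alpha)^{\vee} = w\alpha^{\vee}$ identifies it with the Papi reflection order on $\Delta^{\vee,+}$ attached to the same reduced expression for $w_{\circ}$, and the defining closure property on the coroot side then gives $\alpha \vtl (\alpha^{\vee}+\beta^{\vee})^{\vee} \vtl \beta$. No gaps.
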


We can easily generalize Lemma~\ref{lem:reflection_coroot} to the one for reflection orders $\prec$ on $w\Delta^{+}$ for an arbitrary $w \in W$ as follows. 
%
%
\begin{corollary}\label{cor:reflection_coroot_arbitrary}
Let $w \in W$. Take a reflection order $\prec$ on $w\Delta^{+}$. 
Let $\alpha, \beta \in w\Delta^{+}$ be such that $\alpha \prec \beta$. 
If $\alpha^{\vee} + \beta^{\vee} \in w\Delta^{\vee, +}$, 
then one has $\alpha \prec (\alpha^{\vee} + \beta^{\vee})^{\vee} \prec \beta$. 
\end{corollary}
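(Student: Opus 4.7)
The plan is to reduce the corollary to Lemma~\ref{lem:reflection_coroot} by pulling back the reflection order $\prec$ on $w\Delta^{+}$ to a reflection order on $\Delta^{+}$ via the action of $w$. Because $w \in W$ acts as a bijection $\Delta^{+} \to w\Delta^{+}$ (where the image may contain negative roots), the assignment $\alpha \mapsto w\alpha$ transports the order in a clean way.

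First I would define the total order $\prec'$ on $\Delta^{+}$ by declaring $\alpha \prec' \beta$ if and only if $w\alpha \prec w\beta$. To show that $\prec'$ is itself a reflection order on $\Delta^{+}$, I take $\alpha, \beta \in \Delta^{+}$ with $\alpha+\beta \in \Delta^{+}$ and observe that $w\alpha, w\beta, w(\alpha+\beta) = w\alpha + w\beta$ all lie in $w\Delta^{+}$. Applying the reflection order hypothesis to $\prec$ on $w\Delta^{+}$ yields either $w\alpha \prec w(\alpha+\beta) \prec w\beta$ or the reverse; translating through the definition of $\prec'$ gives the required property for $\prec'$.

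Next I apply Lemma~\ref{lem:reflection_coroot} to the reflection order $\prec'$ on $\Delta^{+}$. Write the given elements $\alpha, \beta \in w\Delta^{+}$ as $\alpha = w\alpha'$ and $\beta = w\beta'$ with $\alpha', \beta' \in \Delta^{+}$. The hypothesis $\alpha \prec \beta$ becomes $\alpha' \prec' \beta'$, and the coroot hypothesis $\alpha^{\vee} + \beta^{\vee} \in w\Delta^{\vee,+}$ becomes $(\alpha')^{\vee} + (\beta')^{\vee} \in \Delta^{\vee,+}$, using the $W$-equivariance of the coroot map $(w\gamma)^{\vee} = w\gamma^{\vee}$. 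Then Lemma~\ref{lem:reflection_coroot} gives
\begin{equation}
\alpha' \prec' ((\alpha')^{\vee} + (\beta')^{\vee})^{\vee} \prec' \beta'.
\end{equation}

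Finally, applying $w$ (i.e., unpacking the definition of $\prec'$) and using $W$-equivariance of the coroot map once more to identify $w\bigl(((\alpha')^{\vee} + (\beta')^{\vee})^{\vee}\bigr) = (\alpha^{\vee} + \beta^{\vee})^{\vee}$ yields the desired chain $\alpha \prec (\alpha^{\vee} + \beta^{\vee})^{\vee} \prec \beta$. I do not foresee any genuine obstacle here; the only point requiring care is the bookkeeping that the bijection $\alpha \mapsto w\alpha$ between $\Delta^{+}$ and $w\Delta^{+}$ respects both the additive structure used in the reflection-order axiom and the coroot map used in the statement, both of which are immediate from the linearity of the $W$-action.
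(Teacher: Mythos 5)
Your proposal is correct and follows essentially the same route as the paper: pull back $\prec$ along $w$ to a reflection order on $\Delta^{+}$, apply Lemma~\ref{lem:reflection_coroot}, and transport the conclusion back using the $W$-equivariance of the coroot map. The only difference is that you spell out the verification that the pulled-back order is a reflection order, which the paper simply asserts.
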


\begin{proof}
We define a total order $\vtl$ on $\Delta^{+}$ by: $\alpha \vtl \beta \Leftrightarrow w\alpha \prec w\beta$ for $\alpha, \beta \in \Delta^{+}$. 
Then $\vtl$ is a reflection order on $\Delta^{+}$. 
Let $\alpha, \beta \in \Delta^{+}$ be such that $w\alpha \prec w\beta$, and assume that $(w\alpha)^{\vee} + (w\beta)^{\vee} \in w\Delta^{\vee, +}$. 
Then we have $\alpha \vtl \beta$, and $\alpha^{\vee} + \beta^{\vee} = w^{-1}((w\alpha)^{\vee} + (w\beta)^{\vee}) \in \Delta^{\vee, +}$. 
Hence Lemma~\ref{lem:reflection_coroot} implies that $\alpha \vtl (\alpha^{\vee} + \beta^{\vee})^{\vee} \vtl \beta$. 
Therefore, we deduce that $w\alpha \prec ((w\alpha)^{\vee} + (w\beta)^{\vee})^{\vee} \prec w\beta$, as desired. 
This proves the corollary. 
\end{proof}

Let $\vtl$ be a reflection order on $\Delta^{+}$. 
A directed path $\bp$ in $\QBG(W)$ of the form: 
\begin{equation}
\bp: w_{0} \xrightarrow{\beta_{1}} w_{1} \xrightarrow{\beta_{2}} \cdots \xrightarrow{\beta_{r}} w_{r},
\end{equation}
with $\beta_{1} \vtl \cdots \vtl \beta_{r}$, is called a \emph{label-increasing} directed path with respect to $\vtl$. 
%
%
\begin{theorem}[{\cite[Theorem~6.4]{BFP}}]\label{thm:shellability}
Let $\vtl$ be a reflection order on $\Delta^{+}$. 
For all $v, w \in W$, there exists a unique label-increasing directed path from $v$ to $w$ in $\QBG(W)$ with respect to $\vtl$. 
Moreover, the unique label-increasing directed path from $v$ to $w$ has the minimum length. 
\end{theorem}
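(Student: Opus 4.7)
The plan is to establish existence, uniqueness, and the minimum-length assertion simultaneously by induction on the graph-theoretic distance $d(v,w)$ from $v$ to $w$ in $\QBG(W)$, which is well-defined and finite (the strong connectivity of $\QBG(W)$ being a basic auxiliary fact one would verify separately, for example via the quantum edges $1 \xrightarrow{\theta} s_{\theta}$ and its $W$-translates). The base case $d(v,w)=0$, i.e., $v=w$, is handled by the empty path. For the inductive step, existence will be obtained by a greedy construction: among all edges $v \xrightarrow{\beta} vs_{\beta}$ in $\QBG(W)$ that lie on a shortest path, meaning $d(vs_{\beta},w)=d(v,w)-1$, select the one whose label $\beta_{1}$ is minimal under $\vtl$. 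By the inductive hypothesis there is a unique label-increasing directed path $\bp'$ from $vs_{\beta_{1}}$ to $w$, and it has length $d(v,w)-1$; prepending the edge $v \xrightarrow{\beta_{1}} vs_{\beta_{1}}$ produces a path of length $d(v,w)$ from $v$ to $w$. The essential point is to verify that the first label of $\bp'$ is $\vtr \beta_{1}$, which will be done by contradiction using the diamond lemma described below; this simultaneously delivers the minimum-length assertion, since the construction produces a path realizing $d(v,w)$.

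For uniqueness, given two label-increasing paths $\bp_{1}$ and $\bp_{2}$ from $v$ to $w$, if their first edges coincide the inductive hypothesis applied to their tails finishes the argument. If instead the first labels $\beta_{1}$ and $\beta_{1}'$ are distinct, say with $\beta_{1} \vtl \beta_{1}'$, the minimal first label $\beta_{1}$ is forced to be the greedy choice, and the diamond lemma would allow $\bp_{2}$ to be rewritten so as to begin with $\beta_{1}$ without increasing its length, reducing to the case of coincident first edges. Combined with minimality of length, this then also shows that both paths have the same length, so the inductive step closes cleanly.

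The main obstacle, and where the real content sits, is the diamond lemma itself: given a two-edge directed path $v \xrightarrow{\beta} v' \xrightarrow{\gamma} v''$ in $\QBG(W)$ with $\gamma \vtl \beta$, one must construct an alternative directed path from $v$ to $v''$ (of length at most $2$) whose first label $\gamma'$ satisfies $\gamma' \vtl \beta$, and is compatible with the distance bookkeeping used in the greedy step above. The natural candidate for $\gamma'$ is obtained by commuting the reflections $s_{\beta}$ and $v^{\prime -1}s_{\gamma}v'$, and the verification would split into cases according to the Bruhat/quantum type of each of the two edges (B--B, B--Q, Q--B, Q--Q), with the length-change identities from the definition of $\QBG(W)$ dictating which filling is possible. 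The key ordering property needed to keep the new path label-increasing is precisely what Lemma~\ref{lem:reflection_coroot} and Corollary~\ref{cor:reflection_coroot_arbitrary} supply: whenever an interpolated coroot $(\gamma^{\vee}+\beta^{\vee})^{\vee}$ arises from the commutation, it automatically sits between $\gamma$ and $\beta$ in $\vtl$, which is exactly the compatibility required to preserve label-increasingness under diamond moves and thus to drive the induction.
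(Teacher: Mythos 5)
The paper does not prove Theorem~\ref{thm:shellability} at all: it is quoted verbatim from \cite[Theorem~6.4]{BFP}, where it is obtained from the Yang--Baxter relations satisfied by the (mixed/quantum) Bruhat operators, the combinatorial heart being the quadrilateral lemma of that paper. Measured against that, your proposal is not a proof but a reduction of the theorem to a ``diamond lemma'' that you explicitly leave unproved and yourself describe as ``where the real content sits.'' That lemma \emph{is} the content of the theorem; everything else in your sketch (induction on the distance in $\QBG(W)$, the greedy choice of the $\vtl$-minimal label on a geodesic edge) is routine bookkeeping around it. So there is a genuine gap, not a complete argument by a different route.

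Moreover, the way you describe the missing lemma suggests it would not go through as sketched. For a two-edge path $v \xrightarrow{\beta} vs_{\beta} \xrightarrow{\gamma} vs_{\beta}s_{\gamma}$, the alternative route around the diamond comes from $s_{\beta}s_{\gamma} = s_{s_{\beta}\gamma}\,s_{\beta}$, so its first label is $|s_{\beta}\gamma|$, not a root whose coroot is $\gamma^{\vee}+\beta^{\vee}$; Lemma~\ref{lem:reflection_coroot} and Corollary~\ref{cor:reflection_coroot_arbitrary} are convexity statements about coroot sums in a reflection order and do not supply what is actually needed. The genuine difficulty is not the ordering of the new labels but showing that the candidate edges of the rerouted path are edges of $\QBG(W)$ at all, i.e.\ satisfy (B) or (Q) in each of the four type combinations; this requires the length inequalities behind quantum edges (e.g.\ $\ell(ws_{\alpha}) \geq \ell(w) - 2\pair{\rho}{\alpha^{\vee}} + 1$) and a delicate case analysis --- precisely \cite[Lemma~6.7]{BFP}, or the diamond lemmas for $\QBG(W)$ proved in \cite{LNSSS1}. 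Finally, your uniqueness step (``the diamond lemma would allow $\bp_{2}$ to be rewritten so as to begin with $\beta_{1}$'') is unjustified: a single diamond move can destroy label-increasingness further along the path, and you give no termination or induction scheme for this rewriting, whereas the published proofs avoid such a rewriting argument altogether.
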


The assertion of Theorem~\ref{thm:shellability} is called the \emph{shellability property} of $\QBG(W)$. 
 
For all $v, w \in W$, 
there exists at least one shortest directed path $\bp$ in $\QBG(W)$ from $v$ to $w$; 
we set 
\begin{equation}
\ell(v \Rightarrow w) := \ell(\bp), \quad \wt(v \Rightarrow w) := \wt(\bp). 
\end{equation}
Note that by \cite[Lemma~1\,(2)]{P} (or \cite[Proposition~8.1]{LNSSS1}), $\wt(v \Rightarrow w)$ is well-defined. 
Also, note that $\ell(v \Rightarrow w) \equiv \ell(w) - \ell(v) \mod 2$ for all $v, w \in W$. 

\subsection{Some specific reflection orders}

For $\lambda \in P$, we set 
\begin{align}
\Dp{\lambda} &:= \{ \alpha \in \Delta^{+} \mid \pair{\lambda}{\alpha^{\vee}} > 0 \}, \\
\De{\lambda} &:= \{ \alpha \in \Delta^{+} \mid \pair{\lambda}{\alpha^{\vee}} = 0 \}, \\
\Dn{\lambda} &:= \{ \alpha \in \Delta^{+} \mid \pair{\lambda}{\alpha^{\vee}} < 0 \}; 
\end{align}
let $\RO(\lambda, \Delta^{+})$ denotes 
the set of all reflection orders $\vtl$ on $\Delta^{+}$ 
such that $\alpha \vtl \beta \vtl \gamma$ for all 
$\alpha \in \Dn{\lambda}$, $\beta \in \De{\lambda}$, and $\gamma \in \Dp{\lambda}$. 

\begin{remark}
The set $\RO(\lambda, \Delta^{+})$ is nonempty for each $\lambda \in P$. 
We will show this fact in Remark~\ref{rem:nonempty_RO}. 
\end{remark}

For $\lambda \in P$, there exists a unique $\lambda_{+} \in W \lambda$ such that $\lambda_{+} \in P^{+}$. 
Also, the set $\{ w \in W \mid w \lambda_{+} = \lambda \}$ has a unique maximal element (with respect to the Bruhat order), 
which we denote by $w(\lambda)$. 

\begin{lemma}
For $\lambda \in P$, we have $w(\lambda) \Delta^{+} = (\Dp{\lambda}) \sqcup (-\De{\lambda}) \sqcup (-\Dn{\lambda})$.
\end{lemma}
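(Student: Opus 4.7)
The plan is to describe $w(\lambda)$ explicitly as a product, and then analyze its action on $\Delta^{+}$ by splitting according to the parabolic subsystem stabilizing $\lambda_{+}$. Let $W_{\lambda_{+}} := \{u \in W \mid u\lambda_{+} = \lambda_{+}\}$; this is the parabolic subgroup whose root system $\Delta_{\lambda_{+}} := \{\alpha \in \Delta \mid \pair{\lambda_{+}}{\alpha^{\vee}} = 0\}$ has positive part $\De{\lambda_{+}}$. The set $\{w \in W \mid w\lambda_{+} = \lambda\}$ is the left coset $vW_{\lambda_{+}}$, where $v$ is its unique minimal-length element; a standard argument gives $v(\De{\lambda_{+}}) \subset \Delta^{+}$ and $\ell(vu) = \ell(v) + \ell(u)$ for all $u \in W_{\lambda_{+}}$. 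Hence the Bruhat-maximal element of this coset is
\begin{equation*}
w(\lambda) = v \cdot w_{\circ}^{\lambda_{+}},
\end{equation*}
where $w_{\circ}^{\lambda_{+}}$ denotes the longest element of $W_{\lambda_{+}}$.

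For an arbitrary $\alpha \in \Delta^{+}$, I would then compute
\begin{equation*}
\pair{\lambda}{(w(\lambda)\alpha)^{\vee}} = \pair{w(\lambda)^{-1}\lambda}{\alpha^{\vee}} = \pair{\lambda_{+}}{\alpha^{\vee}} \ge 0,
\end{equation*}
with equality if and only if $\alpha \in \De{\lambda_{+}}$, by dominance of $\lambda_{+}$. If $\alpha \in \Delta^{+} \setminus \De{\lambda_{+}}$, the above pairing is strictly positive, so $w(\lambda)\alpha$ lies in $\Dp{\lambda}$ when it is positive and in $-\Dn{\lambda}$ when it is negative; in either case $w(\lambda)\alpha \in (\Dp{\lambda}) \cup (-\Dn{\lambda})$. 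If $\alpha \in \De{\lambda_{+}}$, I would use that $w_{\circ}^{\lambda_{+}}$ sends $\De{\lambda_{+}}$ onto $-\De{\lambda_{+}}$ to write $w_{\circ}^{\lambda_{+}}\alpha = -\beta$ for some $\beta \in \De{\lambda_{+}}$; then $v\beta \in \Delta^{+}$ by the property of minimal coset representatives, and $\pair{\lambda}{(v\beta)^{\vee}} = \pair{\lambda_{+}}{\beta^{\vee}} = 0$, hence $v\beta \in \De{\lambda}$, giving $w(\lambda)\alpha = -v\beta \in -\De{\lambda}$.

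Combining the two cases yields the inclusion $w(\lambda)\Delta^{+} \subset (\Dp{\lambda}) \sqcup (-\De{\lambda}) \sqcup (-\Dn{\lambda})$; these three subsets are pairwise disjoint, being distinguished by the sign of the root together with its pairing against $\lambda$. Since $|\Dp{\lambda}| + |\De{\lambda}| + |\Dn{\lambda}| = |\Delta^{+}| = |w(\lambda)\Delta^{+}|$, the inclusion must be an equality, proving the lemma. The only potential obstacle is the identification $w(\lambda) = v \cdot w_{\circ}^{\lambda_{+}}$ together with the sign-preservation $v(\De{\lambda_{+}}) \subset \Delta^{+}$; both are standard facts about minimal coset representatives for parabolic subgroups of a finite Coxeter group and can be invoked with a brief reference.
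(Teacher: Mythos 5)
Your proof is correct, and it differs from the paper's in how the key step is organized. You share the paper's two main ingredients, namely the computation $\pair{\lambda}{(w(\lambda)\alpha)^{\vee}} = \pair{w(\lambda)^{-1}\lambda}{\alpha^{\vee}} = \pair{\lambda_{+}}{\alpha^{\vee}}$ together with dominance of $\lambda_{+}$, and the final cardinality count; but the treatment of the stabilizer part is genuinely different. The paper never factors $w(\lambda)$: it shows directly that each $\beta \in \Dp{\lambda}$ (resp.\ $-\beta$ for $\beta \in \Dn{\lambda}$) lies in $w(\lambda)\Delta^{+}$, and for the zero part it uses only the defining Bruhat-maximality of $w(\lambda)$ --- for $i \in I_{\lambda_{+}}$ one has $w(\lambda)s_{i} < w(\lambda)$, hence $w(\lambda)\alpha_{i} \in -\Delta^{+}$, which gives $w(\lambda)\Delta^{+}_{\lambda_{+}} = -\De{\lambda}$ --- before counting. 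You instead invoke the factorization $w(\lambda) = u(\lambda)\,w_{\circ}(I_{\lambda_{+}})$ (your $v\cdot w_{\circ}^{\lambda_{+}}$: minimal coset representative times the longest element of the parabolic stabilizer), together with the standard facts $\ell(u(\lambda)u) = \ell(u(\lambda)) + \ell(u)$ for $u \in W_{I_{\lambda_{+}}}$ and $u(\lambda)\,\De{\lambda_{+}} \subset \Delta^{+}$, and you prove the reverse inclusion $w(\lambda)\Delta^{+} \subset (\Dp{\lambda}) \sqcup (-\De{\lambda}) \sqcup (-\Dn{\lambda})$ before counting. Both routes work: the paper's is more self-contained, needing nothing beyond the definition of $w(\lambda)$, while yours outsources the work to parabolic-coset facts and in return gives a more explicit description, one that matches the factorization the paper itself uses later when constructing elements of $\RO(\lambda, w(\lambda)\Delta^{+})$. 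The only point to be careful about is the identification of the Bruhat-maximal element of the coset with $u(\lambda)w_{\circ}(I_{\lambda_{+}})$: the paper defines $w(\lambda)$ by Bruhat maximality, not by maximal length, so this identification should be cited or justified (e.g.\ via the subword property applied to the length-additive reduced expressions), as you indicate.
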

\begin{proof}
We set $I_{\lambda_{+}} := \{ i \in I \mid \pair{\lambda_{+}}{\alpha_{i}^{\vee}} = 0 \}$, and $\Delta_{\lambda_{+}}^{+} := \Delta^{+} \cap \sum_{i \in I_{\lambda_{+}}} \BZ_{\ge 0} \alpha_{i}$. 

If $\beta \in \Dp{\lambda}$, then $\pair{\lambda}{\beta^{\vee}} > 0$. Hence we have 
\begin{align}
\pair{\lambda_{+}}{w(\lambda)^{-1}\beta^{\vee}} = \pair{w(\lambda) \lambda_{+}}{\beta^{\vee}} = \pair{\lambda}{\beta^{\vee}} > 0. 
\end{align}
Since $\lambda_{+}$ is dominant, this implies that $w(\lambda)^{-1} \beta \in \Delta^{+} \setminus \Delta^{+}_{\lambda_{+}}$, and hence 
$\beta \in w(\lambda) \Delta^{+} \setminus w(\lambda) \Delta^{+}_{\lambda_{+}}$. 
If $\beta \in \Dn{\lambda}$, then $\pair{\lambda}{\beta^{\vee}} < 0$. Hence we have 
\begin{align}
\pair{\lambda_{+}}{w(\lambda)^{-1}\beta^{\vee}} = \pair{w(\lambda) \lambda_{+}}{\beta^{\vee}} = \pair{\lambda}{\beta^{\vee}} < 0. 
\end{align}
This implies that $w(\lambda)^{-1}\beta \in -(\Delta^{+} \setminus \Delta^{+}_{\lambda_{+}})$, and hence 
$-\beta \in w(\lambda) \Delta^{+} \setminus w(\lambda) \Delta^{+}_{\lambda_{+}}$. 
Thus, we have shown that $(\Dp{\lambda}) \sqcup (-\Dn{\lambda}) \subset w(\lambda) \Delta^{+} \setminus w(\lambda) \Delta^{+}_{\lambda_{+}}$. 

Next, we consider the relation between $\De{\lambda}$ and $w(\lambda)\Delta^{+}_{\lambda_{+}}$. 
We claim that $w(\lambda) \Delta^{+}_{\lambda_{+}} \subset - \Delta^{+}$. 
Let $i \in I_{\lambda_{+}}$. Since $\pair{\lambda_{+}}{\alpha_{i}^{\vee}} = 0$, or equivalently, $s_{i} \lambda_{+} = \lambda_{+}$, 
we have $w(\lambda) s_{i} \lambda_{+} = w(\lambda) \lambda_{+} = \lambda$. 
By the maximality of $w(\lambda)$, it follows that $w(\lambda) s_{i} < w(\lambda)$. 
This implies that $w(\lambda) \alpha_{i} \in -\Delta^{+}$. 
Thus we obtain $w(\lambda) \Delta^{+}_{\lambda_{+}} \subset - \Delta^{+}$, as desired. 
Let $\beta \in \Delta^{+}_{\lambda_{+}}$. 
Since $\pair{\lambda_{+}}{w(\lambda)^{-1}\beta^{\vee}} = \pair{\lambda}{\beta^{\vee}}$, 
we see that $\beta \in w(\lambda) \Delta^{+}_{\lambda_{+}}$ if and only if $\beta \in -\De{\lambda}$. 
Hence we obtain $w(\lambda) \Delta^{+}_{\lambda_{+}} = -\De{\lambda}$. 

From the above, we have $(\Dp{\lambda}) \sqcup (-\De{\lambda}) \sqcup (-\Dn{\lambda}) \subset w(\lambda) \Delta^{+}$. 
Since 
\begin{equation}
|\Dp{\lambda}| + |-\De{\lambda}| + |-\Dn{\lambda}| = |\Delta^{+}| = |w(\lambda) \Delta^{+}| (< \infty), 
\end{equation} 
we conclude that $(\Dp{\lambda}) \sqcup (-\De{\lambda}) \sqcup (-\Dn{\lambda}) = w(\lambda) \Delta^{+}$, as desired. 
This proves the lemma. 
\end{proof}

Based on this lemma, we consider the set of reflection orders on $w(\lambda) \Delta^{+}$ 
satisfying certain additional conditions. 

For $\lambda \in P$, let $\RO(\lambda, w(\lambda)\Delta^{+})$ denotes 
the set of all reflection orders $\prec$ on $w(\lambda) \Delta^{+}$ 
such that $\gamma \prec -\alpha \prec -\beta$ for all 
$\gamma \in \Dp{\lambda}$, $\alpha \in \Dn{\lambda}$, and $\beta \in \De{\lambda}$. 

\begin{example}
Assume that $\Fg$ is of type $A_{3}$. Let $\lambda = -\vpi_{1}+\vpi_{3}$. Then we see that $\lambda^{+} = \vpi_{2}$, $w(\lambda) = s_{1}s_{2}s_{1}s_{3}$, and 
\begin{equation}
w(\lambda)\Delta^{+} = \{ \underbrace{\alpha_{2}+\alpha_{3}, \alpha_{3}}_{\Delta^{+}(\lambda)_{>0}}, \underbrace{-\alpha_{1}, -\alpha_{1}-\alpha_{2}}_{-\Delta^{+}(\lambda)_{<0}}, \underbrace{-\alpha_{1}-\alpha_{2}-\alpha_{3}, -\alpha_{2}}_{-\Delta^{+}(\lambda)_{=0}} \}. 
\end{equation}
If we define a total order $\prec$ on $w(\lambda)\Delta^{+}$ by: 
\begin{equation}
\alpha_{2}+\alpha_{3} \prec \alpha_{3} \prec -\alpha_{1} \prec -\alpha_{1}-\alpha_{2} \prec -\alpha_{1}-\alpha_{2}-\alpha_{3} \prec -\alpha_{2}, 
\end{equation}
then we can verify that $\prec \, \in \RO(\lambda, w(\lambda)\Delta^{+})$. 
\end{example}

\begin{remark}
We can construct an element of $\RO(\lambda, w(\lambda)\Delta^{+})$ explicitly as follows. 
Let $u(\lambda) \in W$ be the unique minimal element of the set $\{w \in W \mid w\lambda_{+} = \lambda\}$. 
Also, we denote by $w_{\circ}(I_{\lambda_{+}})$ the longest element of the parabolic subgroup $W_{I_{\lambda_{+}}} := \langle s_{i} \mid i \in I_{\lambda_{+}} \rangle$ of $W$. 
Then we have the following length-additive decomposition: 
\begin{equation}
w_{\circ} = u(-\lambda)^{-1} u(\lambda) w_{\circ}(I_{\lambda_{+}}). 
\end{equation}
Hence, by taking reduced expressions 
\begin{align}
u(-\lambda) &= s_{i_{0}} s_{i_{-1}} \cdots s_{i_{-p}}, \\ 
u(\lambda) &= s_{i_{1}} s_{i_{2}} \cdots s_{i_{q}}, \\ 
w_{\circ}(I_{\lambda_{+}}) &= s_{i_{q+1}} s_{i_{q+2}} \cdots s_{i_{r}}, 
\end{align}
we obtain the following reduced expression: 
\begin{equation}
w_{\circ} = \underbrace{s_{i_{-p}} \cdots s_{i_{-1}} s_{i_{0}}}_{u(-\lambda)^{-1}} \underbrace{s_{i_{1}} s_{i_{2}} \cdots s_{i_{q}}}_{u(\lambda)} \underbrace{s_{i_{q+1}} s_{i_{q+2}} \cdots s_{i_{r}}}_{w_{\circ}(I_{\lambda_{+}})}. 
\end{equation}
Therefore, if we set 
\begin{equation}
\beta_{k} := s_{i_{r}} s_{i_{r-1}} \cdots s_{i_{k+1}} \alpha_{i_{k}}
\end{equation}
for $-p \le k \le r$, then the total order $\vtl$ on $\Delta^{+}$ defined by: $\beta_{-p} \vtl \beta_{-p+1} \vtl \cdots \vtl \beta_{r-1} \vtl \beta_{r}$ is a reflection order,
and hence the total order $\prec$ on $w(\lambda)\Delta^{+}$ defined by: $w(\lambda)\beta_{-p} \prec w(\lambda)\beta_{-p+1} \prec \cdots \prec w(\lambda)\beta_{r-1} \prec w(\lambda)\beta_{r}$ is a reflection order on $w(\lambda)\Delta^{+}$. 
By direct computation, we see that 
\begin{equation}
w(\lambda)\beta_{k} = \begin{cases}
s_{i_{0}} s_{i_{-1}} \cdots s_{i_{k+1}} \alpha_{i_{k}} & \text{if $-p \le k \le 0$}, \\ 
-s_{i_{1}} s_{i_{2}} \cdots s_{i_{k-1}} \alpha_{i_{k}} & \text{if $1 \le k \le q$}, \\ 
-u(\lambda) s_{i_{q+1}} s_{i_{q+2}} \cdots s_{i_{k-1}} \alpha_{i_{k}} & \text{if $q+1\le k \le r$}. 
\end{cases}
\end{equation}
Hence we have 
\begin{align}
\{ w(\lambda)\beta_{k} \mid -p \le k \le 0 \} &= \Delta^{+} \cap u(-\lambda)\Delta^{-} = \Dp{\lambda}, \\ 
\{ w(\lambda)\beta_{k} \mid 1 \le k \le q \} &= -(\Delta^{+} \cap u(\lambda) \Delta^{-}) = -\Dn{\lambda}, \\ 
\{ w(\lambda)\beta_{k} \mid q+1 \le k \le r \} &= -u(\lambda)\Delta_{\lambda_{+}}^{+}. 
\end{align}
From these, we conclude that ${\prec} \in \RO(\lambda, w(\lambda)\Delta^{+})$. 
\end{remark}

Let ${\vtl} \in \RO(\lambda, \Delta^{+})$. We write the elements of $\Delta^{+}$ as:  
\begin{equation}
\underbrace{\gamma_{-r} \vtl \cdots \vtl \gamma_{-q-2} \vtl \gamma_{-q-1}}_{\Dn{\lambda}} 
\vtl \underbrace{\gamma_{-q} \vtl \cdots \vtl \gamma_{-1} \vtl \gamma_{0}}_{\De{\lambda}} 
\vtl \underbrace{\gamma_{1} \vtl \gamma_{2} \vtl \cdots \vtl \gamma_{p}}_{\Dp{\lambda}}, 
\end{equation}
where $\Dn{\lambda} = \{ \gamma_{-r}, \ldots, \gamma_{-q-2}, \gamma_{-q-1} \}$, 
$\De{\lambda} = \{ \gamma_{-q}, \ldots, \gamma_{-1}, \gamma_{0} \}$, 
and $\Dp{\lambda} = \{ \gamma_{1}, \gamma_{2}, \ldots, \gamma_{p} \}$. 
Then, we define a total order ${\prec} = {\prec_{\vtl}}$ on $w(\lambda) \Delta^{+}$ by: 
\begin{equation}
\begin{split}
& \underbrace{\gamma_{1} \prec \gamma_{2} \prec \cdots \prec \gamma_{p}}_{\Dp{\lambda}} 
\prec \underbrace{-\gamma_{-r} \prec \cdots \prec -\gamma_{-q-2} \prec -\gamma_{-q-1}}_{-\Dn{\lambda}} \\ 
& \hspace{60mm} \prec \underbrace{-\gamma_{-q} \prec \cdots \prec -\gamma_{-1} \prec -\gamma_{0}}_{-\De{\lambda}}. 
\end{split} \label{eq:def_RO_2} 
\end{equation}


\begin{proposition} \label{prop:bij_RO}
The assignment ${\vtl} \mapsto {\prec_{\vtl}}$ gives a bijection between $\RO(\lambda, \Delta^{+})$ and $\RO(\lambda, w(\lambda) \Delta^{+})$. 
\end{proposition}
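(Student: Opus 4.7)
The plan is to prove the proposition by directly verifying that $\vtl \mapsto \prec_{\vtl}$ is a well-defined map and then constructing an explicit inverse. The key technical ingredient is the following coroot-positivity fact: if $\alpha, \beta \in \Delta$ with $\alpha + \beta \in \Delta$, then $(\alpha+\beta)^{\vee}$ is a strictly positive $\BQ$-linear combination of $\alpha^{\vee}$ and $\beta^{\vee}$, namely with coefficients $(\alpha,\alpha)/(\alpha+\beta,\alpha+\beta)$ and $(\beta,\beta)/(\alpha+\beta,\alpha+\beta)$ for any $W$-invariant inner product $(\cdot,\cdot)$ on $\Fh^{\ast}_{\BR}$. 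Consequently, the sign of $\pair{\lambda}{(\alpha+\beta)^{\vee}}$ is controlled by those of $\pair{\lambda}{\alpha^{\vee}}$ and $\pair{\lambda}{\beta^{\vee}}$; this pins down which of the three blocks $\Dp{\lambda}$, $\De{\lambda}$, $\Dn{\lambda}$ the sum $\alpha+\beta$ belongs to in most configurations.

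First I would check well-definedness: given $\vtl \in \RO(\lambda, \Delta^{+})$, verify that $\prec_{\vtl}$ is a reflection order on $w(\lambda)\Delta^{+} = \Dp{\lambda} \sqcup (-\Dn{\lambda}) \sqcup (-\De{\lambda})$ (the block condition $\Dp{\lambda} \prec -\Dn{\lambda} \prec -\De{\lambda}$ required for membership in $\RO(\lambda, w(\lambda)\Delta^{+})$ is built into the construction). This is a case analysis on which of the three blocks the elements $\alpha, \beta, \alpha+\beta \in w(\lambda)\Delta^{+}$ lie in. The coroot-positivity observation rules out several combinations (for example, $\alpha, \beta \in -\Dn{\lambda}$ with $\alpha+\beta \in -\De{\lambda}$ cannot happen since the pairing would have to be strictly positive, not zero; and two negative roots cannot sum to a positive root). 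In each remaining case, I would identify a sum relation among positive roots---for instance, when $\alpha \in \Dp{\lambda}$ and $\beta = -\beta' \in -\Dn{\lambda}$ with $\alpha+\beta \in \Dp{\lambda}$, the relation $\alpha = (\alpha+\beta) + \beta'$ in $\Delta^{+}$---and apply the reflection order property of $\vtl$ together with the block ordering $\Dn{\lambda} \vtl \De{\lambda} \vtl \Dp{\lambda}$ to force the desired betweenness $\alpha \prec_{\vtl} \alpha+\beta \prec_{\vtl} \beta$.

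For the inverse, given $\prec \in \RO(\lambda, w(\lambda)\Delta^{+})$, I would define $\vtl$ on $\Delta^{+}$ by the reverse recipe: blocks in the order $\Dn{\lambda} \vtl \De{\lambda} \vtl \Dp{\lambda}$, the $\prec$-order on $\Dp{\lambda}$ kept as is, and the $\prec$-orders on $-\Dn{\lambda}$, $-\De{\lambda}$ pulled back via negation. A symmetric case analysis---again using coroot-positivity to determine which block a root sum lies in, and lifting a sum relation from $w(\lambda)\Delta^{+}$ (replacing $\gamma_{1} + \gamma_{2} = \gamma$ inside $\Dn{\lambda}$ by $(-\gamma_{1}) + (-\gamma_{2}) = -\gamma$ inside $-\Dn{\lambda}$, or introducing a sign-crossing term such as $\gamma_{2} = (\gamma_{1}+\gamma_{2}) + (-\gamma_{1})$ when one summand lies in a different block)---shows that this $\vtl$ lies in $\RO(\lambda, \Delta^{+})$. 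Injectivity of the forward map is immediate from the definition since the blocks and within-block orders of $\prec_{\vtl}$ determine $\vtl$ uniquely, and the two constructions are mutually inverse by design.

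The main obstacle is the case analysis itself: roughly six block configurations in each direction, each requiring selection of the correct auxiliary sum relation in $\Delta^{+}$ or $w(\lambda)\Delta^{+}$, and careful bookkeeping of which comparison is supplied by the block order versus the within-block order. The coroot-positivity lemma is indispensable for trimming the case list to something manageable; without it one would waste effort on configurations that never actually arise.
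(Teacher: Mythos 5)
Your proposal is correct and follows essentially the same route as the paper: a block-by-block case analysis on $(\Dp{\lambda}) \sqcup (-\Dn{\lambda}) \sqcup (-\De{\lambda})$ showing that $\prec_{\vtl}$ is a reflection order, with the inverse map given by the symmetric construction. The only cosmetic difference is that in the two mixed cases ($\alpha \in \Dp{\lambda}$ and $\beta \in (-\Dn{\lambda}) \sqcup (-\De{\lambda})$) the paper argues by contradiction from the relation $-\beta = (-\alpha-\beta)+\alpha$ in $\Delta^{+}$, so your coroot-positivity lemma, while valid and useful for locating $\alpha+\beta$, is convenient rather than indispensable.
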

\begin{proof}
We show that ${\prec} = {\prec_{\vtl}}$ is a reflection order on $w(\lambda) \Delta^{+}$. 
Let $\alpha, \beta \in w(\lambda) \Delta^{+}$ be such that $\alpha \prec \beta$ and $\alpha + \beta \in w(\lambda) \Delta^{+}$. 
Then $\alpha$ and $\beta$ satisfy one of the following:
\begin{enu}
\item $\alpha, \beta \in \Dp{\lambda}$; 
\item $\alpha, \beta \in -\De{\lambda}$; 
\item $\alpha, \beta \in -\Dn{\lambda}$; 
\item $\alpha \in \Dp{\lambda}$ and $\beta \in -\Dn{\lambda}$; 
\item $\alpha \in \Dp{\lambda}$ and $\beta \in -\De{\lambda}$; 
\item $\alpha \in -\Dn{\lambda}$ and $\beta \in -\De{\lambda}$. 
\end{enu}
In cases (1)--(3) and (6), it is obvious that $\alpha \prec \alpha + \beta \prec \beta$ 
since $\vtl$ is a reflection order on $\Delta^{+}$. 

Assume case (4) or (5). Suppose, for a contradiction, that $\alpha \prec \alpha + \beta \prec \beta$ does not hold. 
If $\alpha \prec \beta \prec \alpha + \beta$, then $\alpha + \beta \in (-\Dn{\lambda}) \sqcup (-\De{\lambda})$. 
By the definition of ${\prec} = {\prec_{\vtl}}$, we deduce that $-\beta \vtl -\alpha - \beta \vtl \alpha$. 
However, since $-\beta = (-\alpha - \beta) + \alpha$, 
this contradicts the assumption that $\vtl$ is a reflection order on $\Delta^{+}$. 
Similarly, we can show that $\alpha + \beta \prec \alpha \prec \beta$ does not hold. 
Hence we have $\alpha \prec \alpha + \beta \prec \beta$, as desired. 
This completes the proof that ${\prec} \in \RO(\lambda, w(\lambda) \Delta^{+})$. 

Now, we claim that the assignment ${\vtl} \mapsto {\prec_{\vtl}}$ gives a bijection from $\RO(\lambda, \Delta^{+})$ to $\RO(\lambda, w(\lambda) \Delta^{+})$. 
Indeed, the construction of the inverse map ${\prec_{\vtl}} \mapsto {\vtl}$ is obvious, and it is easy to verify that the resulting total order $\vtl$ is a reflection order on $\Delta^{+}$ by the same argument as above. This proves the proposition. 
\end{proof}

\begin{remark} \label{rem:nonempty_RO}
Through the bijection $\RO(\lambda, \Delta^{+}) \xrightarrow{\sim} \RO(\lambda, w(\lambda)\Delta^{+}) (\not= \emptyset)$ given by Proposition~\ref{prop:bij_RO}, we deduce that $\RO(\lambda, \Delta^{+})$ is also nonempty. 
\end{remark}


\section{The quantum alcove model}\label{sec:quantum_alcove_model}
We review the quantum alcove model, introduced in \cite{LL1} and \cite{LNS}. 

\subsection{Alcove paths and admissible subsets}
First, we recall from \cite{LP1} the definition of alcove paths. 
For $\alpha \in \Delta$ and $k \in \BZ$, we set $H_{\alpha, k} := \{\nu \in \Fh_{\BR}^{\ast} \mid \pair{\nu}{\alpha^{\vee}} = k\}$; 
$H_{\alpha, k}$ is a hyperplane in $\Fh_{\BR}^{\ast}$. 
Also, for $\alpha \in \Delta$ and $k \in \BZ$, we denote by $s_{\alpha, k}$ the (affine) reflection with respect to $H_{\alpha, k}$. 
Note that $s_{\alpha, k}(\nu) = \nu - (\pair{\nu}{\alpha^{\vee}} - k)\alpha$ for $\nu \in \Fh_{\BR}^{\ast}$. 
Each connected component of the space 
\begin{equation}
\Fh_{\BR}^{\ast} \setminus \bigcup_{\alpha \in \Delta^{+}, k \in \BZ} H_{\alpha, k}
\end{equation}
is called an \emph{alcove}. 
If two alcoves $A$ and $B$ have a common wall, then we say that $A$ and $B$ are \emph{adjacent}. 
For adjacent alcoves $A$ and $B$, we write $A \xrightarrow{\beta} B$, $\beta \in \Delta$, 
if the common wall of $A$ and $B$ is contained in $H_{\beta, k}$ for some $k \in \BZ$, 
and $\beta$ points in the direction from $A$ to $B$. 

\begin{definition}[{\cite[Definition~5.2]{LP1}}]
A sequence $(A_{0}, \ldots, A_{r})$ of alcoves is called an \emph{alcove path} if $A_{i-1}$ and $A_{i}$ are adjacent for all $i = 1, \ldots, r$. 
If the length $r$ of an alcove path $(A_{0}, \ldots, A_{r})$ is minimal among all alcove paths from $A_{0}$ to $A_{r}$, 
we say that $(A_{0}, \ldots, A_{r})$ is \emph{reduced}. 
\end{definition}

The \emph{fundamental alcove} $A_{\circ}$ is defined by 
\begin{equation}
A_{\circ} := \{ \nu \in \mathfrak{h}_{\mathbb{R}}^{\ast} \mid \text{$0 < \langle \nu, \alpha^{\vee} \rangle < 1$ for all  $\alpha \in \Delta^{+}$} \}. 
\end{equation}
Also, for $\lambda \in P$, we define $A_{\lambda}$ by 
\begin{equation}
A_{\lambda} := A_{\circ} + \lambda = \{ \nu + \lambda \mid \nu \in A_{\circ} \}. 
\end{equation}

\begin{definition}[{\cite[Definition~5.4]{LP1}}]
Let $\lambda \in P$. 
A sequence $(\beta_{1}, \ldots, \beta_{r})$ of roots $\beta_{1}, \ldots, \beta_{r} \in \Delta$ 
is called a \emph{$\lambda$-chain} 
if there exists an alcove path $(A_{0}, \ldots, A_{r})$, with $A_{0} = A_{\circ}$ and $A_{r} = A_{-\lambda}$, such that 
\begin{equation}
A_{\circ} = A_{0} \xrightarrow{-\beta_{1}} A_{1} \xrightarrow{-\beta_{2}} \cdots \xrightarrow{-\beta_{r}} A_{r} = A_{-\lambda}. 
\end{equation}
If such an alcove path $(A_{0}, \ldots, A_{r})$ is reduced, then we call $(\beta_{1}, \ldots, \beta_{r})$ a \emph{reduced} $\lambda$-chain. 
\end{definition}

Now, following \cite[Section~3.2]{LNS}, we review the quantum alcove model. 
\begin{definition}[{\cite[Definition~17]{LNS}}]
Let $\lambda \in P$, and let $\Gamma = (\beta_{1}, \ldots, \beta_{r})$ be a $\lambda$-chain. 
Fix $w \in W$. A subset $A = \{j_{1} < \cdots < j_{p}\} \subset \{1, \ldots, r\}$ is said to be \emph{$w$-admissible} if 
\begin{equation}
\bp(A): w = w_{0} \xrightarrow{|\beta_{j_{1}}|} w_{1} \xrightarrow{|\beta_{j_{2}}|} \cdots \xrightarrow{|\beta_{j_{p}}|} w_{p}
\end{equation}
is a directed path in $\QBG(W)$. 
Let $\CA(w, \Gamma)$ denote the set of all $w$-admissible subsets of $\{1, \ldots, r\}$. 
\end{definition}
\begin{remark}
The original definition of admissible subsets in \cite{LL1} is only for $w = e \in W$. 
The notion of $w$-admissible subsets for an arbitrary $w \in W$ is introduced in \cite{LNS}. 
\end{remark}

We also consider the ``$q = 0$ part'' of $\CA(w, \Gamma)$. 
For $\lambda \in P$, let $\Gamma = (\beta_{1}, \ldots, \beta_{r})$ be a $\lambda$-chain. Fix $w \in W$. 
We set 
\begin{equation}
\CA|_{q = 0}(w, \Gamma) := \{ A \in \CA(w, \Gamma) \mid \text{$\bp(A)$ is a directed path in $\BG(W)$} \}. 
\end{equation}

For $\lambda \in P$, let $\Gamma = (\beta_{1}, \ldots, \beta_{r})$ be a $\lambda$-chain.
By the definition of $\lambda$-chains, there exists an alcove path $(A_{\circ} = A_{0}, \ldots, A_{r} = A_{-\lambda})$ such that 
\begin{equation}
A_{\circ} = A_{0} \xrightarrow{-\beta_{1}} A_{1} \xrightarrow{-\beta_{2}} \cdots \xrightarrow{-\beta_{r}} A_{r} = A_{-\lambda}. 
\end{equation}
For $k = 1, \ldots, r$, we take $l_{k} \in \BZ$ such that $H_{\beta_{k}, -l_{k}}$ contains the common wall of $A_{k-1}$ and $A_{k}$, 
and set $\wti{l_{k}} := \pair{\lambda}{\beta_{k}^{\vee}} - l_{k}$. 

Fix $w \in W$. For $A = \{j_{1} < \cdots < j_{p}\} \in \CA(w, \Gamma)$, we set 
\begin{equation}
\ed(A) := ws_{|\beta_{j_{1}}|} \cdots s_{|\beta_{j_{p}}|}, \quad \wt(A) := -ws_{\beta_{j_{1}}, -l_{j_{1}}} \cdots s_{\beta_{j_{p}}, -l_{j_{p}}} (-\lambda); 
\end{equation} 
we call $\wt(A)$ the \emph{weight} of $A$.
Also, we define a subset $A^{-} \subset A$ by 
\begin{equation}
A^{-} := \left\{ j_{k} \in A \ \middle| \ \text{$ws_{|\beta_{j_{1}}|} \cdots s_{|\beta_{j_{k-1}}|} \xrightarrow{|\beta_{j_{k}}|} ws_{|\beta_{j_{1}}|} \cdots s_{|\beta_{j_{k}}|}$ is a quantum edge} \right\},  
\end{equation}
and set 
\begin{equation}
\down(A) := \sum_{j \in A^{-}} |\beta_{j}|^{\vee}, \quad \height(A) := \sum_{j \in A^{-}} \sgn(\beta_{j})\wti{l_{j}}; 
\end{equation}
note that $\ed(A) = \ed(\bp(A))$ and $\down(A) = \wt(\bp(A))$. 
In addition, we define $n(A) \in \BZ_{\ge 0}$ by $n(A) := \# \{ j \in A \mid \beta_{j} \in -\Delta^{+} \}$.

\begin{remark}
If $A \in \CA|_{q=0}(w, \Gamma)$, then we have $A^{-} = \emptyset$, and hence $\down(A) = 0$, $\height(A) = 0$. 
\end{remark}

\begin{example}
Assume that $\Fg$ is of type $A_{2}$. Let $\Gamma = (\alpha_{2}, \alpha_{1}+\alpha_{2}, \alpha_{2}, -\alpha_{1})$. Then $\Gamma$ is a $(-\vpi_{1}+2\vpi_{2})$-chain. 
In Table~\ref{tab:admissible}, we give the list of all $A \in \CA(s_{1}, \Gamma)$, together with $\ed(A)$, $\down(A)$, $\wt(A)$, and $\height(A)$. 
\begin{table}[ht]
\centering
\caption{The list of $\ed(A)$, $\down(A)$, $\wt(A)$, and $\height(A)$ for all $A \in \CA(s_{1}, \Gamma)$}
\label{tab:admissible}
\begin{tabular}{|c||cccc|} \hline
$A$ & $\ed(A)$ & $\down(A)$ & $\wt(A)$ & $\height(A)$ \\ \hline 
$\emptyset$ & $s_{1}$ & $0$ & $\vpi_{1}+\vpi_{2}$ & $0$ \\ 
$\{1\}$ & $s_{1}s_{2}$ & $0$ & $-\vpi_{1}-\vpi_{2}$ & $0$ \\ 
$\{2\}$ & $s_{2}s_{1}$ & $0$ & $2\vpi_{1}-\vpi_{2}$ & $0$ \\ 
$\{3\}$ & $s_{1}s_{2}$ & $0$ & $0$ & $0$ \\
$\{4\}$ & $e$ & $\alpha_{1}^{\vee}$ & $\vpi_{1}+\vpi_{2}$ & $0$ \\ 
$\{1, 3\}$ & $s_{1}$ & $\alpha_{2}^{\vee}$ & $0$ & $1$ \\ 
$\{1, 4\}$ & $s_{1}s_{2}s_{1}$ & $0$ & $-\vpi_{1}-\vpi_{2}$ & $0$ \\ 
$\{2, 3\}$ & $s_{1}s_{2}s_{1}$ & $0$ & $0$ & $0$ \\ 
$\{2, 4\}$ & $s_{2}$ & $\alpha_{1}^{\vee}$ & $2\vpi_{1}-\vpi_{2}$ & $0$ \\ 
$\{3, 4\}$ & $s_{1}s_{2}s_{1}$ & $0$ & $0$ & $0$ \\ 
$\{1, 3, 4\}$ & $e$ & $\alpha_{1}^{\vee}+\alpha_{2}^{\vee}$ & $0$ & $1$ \\ 
$\{2, 3, 4\}$ & $s_{1}s_{2}$ & $\alpha_{1}^{\vee}$ & $0$ & $0$ \\ \hline
\end{tabular}
\end{table}
\end{example}

\subsection{Reduced chains of roots}\label{subsec:reduced_expression_vs_chain_of_roots}
Let $\lambda \in P$. We describe the relation between reduced $\lambda$-chains 
and reduced expressions for $t_{\lambda} \in \Wex^{\vee}$.

Take a reduced expression $t_{-\lambda} = \pi^{\vee} s_{i_1} s_{i_2} \cdots s_{i_r} \in \Wex^{\vee}$, 
where $i_1, \ldots, i_r \in I_{\af}$ and $\pi^{\vee} \in \Omega^{\vee}$ is an element of length zero. For $k \in \{1, \ldots, r \}$, (borrowing notation from \cite[Section~3.1]{NNS}) we set 
\begin{align}
(\beta_{k}^{\sL})^{\vee} := \pi^{\vee} s_{i_1} \cdots s_{i_{k-1}} \alpha_{i_k}^\vee,
\end{align}
and $\gamma_{k}^{\sL} := \overline{\beta_{k}^{\sL}} (=\overline{((\beta_{k}^{\sL})^{\vee})^{\vee}})$. 
Note that $\{ (\beta_{1}^{\sL})^{\vee}, \ldots, (\beta_{r}^{\sL})^{\vee} \} = \Daf{\lambda}$.
Then, \cite[Lemma~5.3]{LP1} implies the following.
\begin{lemma}
The sequence $(\gamma_{1}^{\sL}, \ldots, \gamma_{r}^{\sL})$ is a reduced $\lambda$-chain.
\end{lemma}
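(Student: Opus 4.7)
The plan is to build a reduced alcove path from $A_{\circ}$ to $A_{-\lambda}$ directly from the given reduced expression and then read off the sequence of crossing directions. I would define $A_{k} := \pi^{\vee} s_{i_{1}} \cdots s_{i_{k}} A_{\circ}$ for $k = 0, 1, \ldots, r$. Since $\pi^{\vee} \in \Omega^{\vee}$ has length zero, it stabilizes $A_{\circ}$ set-wise, so $A_{0} = A_{\circ}$; and $A_{r} = t_{-\lambda} A_{\circ} = A_{-\lambda}$. Each pair $(A_{k-1}, A_{k})$ is the image of $(A_{\circ}, s_{i_{k}} A_{\circ})$ under the isometry $\pi^{\vee} s_{i_{1}} \cdots s_{i_{k-1}}$, and therefore adjacent, so $(A_{0}, A_{1}, \ldots, A_{r})$ is a genuine alcove path.

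Next I would verify that the crossing at step $k$ goes in the direction $-\gamma_{k}^{\sL}$. The fundamental step $A_{\circ} \to s_{i_{k}} A_{\circ}$ crosses a wall of $A_{\circ}$ in the direction $-\alpha_{i_{k}}$ when $i_{k} \in I$, and in the direction $+\vartheta$ when $i_{k} = 0$ (since $s_{0} = s_{\vartheta} t_{-\vartheta}$ acts as the reflection in $H_{\vartheta, 1}$, and $A_{\circ}$ lies on the side where $\pair{\nu}{\vartheta^{\vee}} < 1$). Pushing forward by the isometry $\pi^{\vee} s_{i_{1}} \cdots s_{i_{k-1}}$ and tracking the finite part through $\overline{\cdot}$ (which is compatible with the $W$-action and annihilates the translation part of $\Wex^{\vee}$), this direction becomes $-\gamma_{k}^{\sL}$, exactly as required by the $\lambda$-chain condition.

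Finally, reducedness follows because $\ell(t_{-\lambda}) = r$ in $\Wex^{\vee}$ (as the expression is reduced and $\ell(\pi^{\vee}) = 0$), which equals the minimum number of hyperplanes separating $A_{\circ}$ from $A_{-\lambda}$; hence the constructed path of length $r$ is reduced, and $(\gamma_{1}^{\sL}, \ldots, \gamma_{r}^{\sL})$ is a reduced $\lambda$-chain. All of the above is essentially the content of \cite[Lemma~5.3]{LP1}; the only additional ingredient here is that the length-zero factor $\pi^{\vee}$ acts as an isometry permuting alcoves without crossing any walls, so it contributes no extra steps. The main point that requires care is the crossing-direction calculation in the $i_{k} = 0$ case, where the identification $\overline{\alpha_{0}^{\vee}} = -\vartheta^{\vee}$ and the placement of $A_{\circ}$ relative to $H_{\vartheta, 1}$ must be tracked explicitly, but this is a direct and standard verification.
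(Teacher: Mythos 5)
Your proposal is correct and takes essentially the same route as the paper, which establishes this lemma simply by invoking \cite[Lemma~5.3]{LP1}; you have just unfolded that citation into an explicit verification (reduced word gives a reduced alcove path, the crossing direction at step $k$ is $-\gamma_{k}^{\sL}$ via the finite part $\overline{\,\cdot\,}$, the length-zero factor $\pi^{\vee}$ stabilizes $A_{\circ}$ and contributes no crossings, and reducedness follows from $\ell(t_{-\lambda}) = r$ being the number of separating hyperplanes). Your sign checks in both the $i_{k} \in I$ and $i_{k} = 0$ cases are accurate, so no gap remains.
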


In addition, for the alcove path $(A_{0}, \ldots, A_{r})$, with $A_{0} = A_{\circ}$ and $A_{r} = A_{-\lambda}$, corresponding to the $\lambda$-chain $(\gamma_{1}^{\sL}, \ldots, \gamma_{r}^{\sL})$, i.e., for the alcove path 
\begin{align}
A_{\circ} = A_0 \xrightarrow{-\gamma_{1}^{\sL}} A_{1} \xrightarrow{-\gamma_{2}^{\sL}} \cdots \xrightarrow{-\gamma_{r}^{\sL}} A_{r} = A_{-\lambda}, 
\end{align}
it is easy to verify that the hyperplane containing the common wall of $A_{k-1}$ and $A_{k}$ 
is identical to $H_{\gamma_{k}^{\sL}, -\deg((\beta_{k}^{\sL})^{\vee})}$ 
for each $k = 1, \ldots, r$. 

Conversely, take a reduced $\lambda$-chain $(\beta_{1}, \ldots, \beta_{r})$. 
Then there exists a reduced alcove path $(A_{0}, \ldots, A_{r})$, with $A_{0} = A_{\circ}$ and $A_{r} = A_{-\lambda}$, such that 
\begin{align}
A_{\circ} = A_0 \xrightarrow{-\beta_{1}} A_{1} \xrightarrow{-\beta_{2}} \cdots \xrightarrow{-\beta_{r}} A_{r} = A_{-\lambda}. 
\end{align}
By \cite[Lemma~5.3]{LP1}, there exists $v \in \Waf^{\vee}$ with reduced expression 
$v = s_{i_{1}} \cdots s_{i_{r}}$ such that $v(A_{\circ}) = A_{-\lambda}$, 
and such that $A_{k} = s_{i_{1}} \cdots s_{i_{k}}(A_{\circ})$ for $k = 0, \ldots, r$. Since $t_{-\lambda}(A_{\circ}) = A_{-\lambda}$, 
there exists an element $\pi^{\vee} \in \Omega^{\vee}$ of length zero such that $t_{-\lambda} = \pi^{\vee} v$. 
Hence we obtain a reduced expression $t_{-\lambda} = \pi^{\vee} s_{i_{1}} \cdots s_{i_{r}}$ for $t_{-\lambda}$. 
Thus, the set of reduced $\lambda$-chains and 
the set of reduced expressions for $t_{\lambda} \in \Wex^{\vee}$ are in one-to-one correspondence. 

Next, we construct a ``suitable'' reduced expression $t_{\lambda} = s_{i_{r}} \cdots s_{i_{2}}s_{i_{1}} (\pi^{\vee})^{-1}$ 
and a ``suitable'' $\lambda$-chain $\Gamma_{\vtl}(\lambda)$; 
see the paragraph at the end of this subsection.
For this purpose, we need the following lemmas. 


\begin{lemma}\label{lem:map_for_forgetful_map1}
For $\beta^{\vee} \in \Daf{\lambda}$, we have $\deg(\beta^{\vee})/\pair{\lambda}{\overline{\beta^{\vee}}} \in \BQ \cap [0, 1]$.
\end{lemma}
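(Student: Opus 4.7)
The plan is to unpack the definition and proceed by a short case analysis on $k := \deg(\beta^{\vee})$ and on whether $\gamma := $ the finite-type root underlying $\overline{\beta^{\vee}}$ is positive or negative. I would write $\beta^{\vee} = \gamma^{\vee} + k\widetilde{\delta}$ with $\gamma \in \Delta$ and $k \in \BZ$, so that $\overline{\beta^{\vee}} = \gamma^{\vee}$ and $\deg(\beta^{\vee}) = k$, and set $m := \pair{\lambda}{\gamma^{\vee}} \in \BZ$. The goal becomes: $m \neq 0$ and $0 \le k/m \le 1$.

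The key computation I would record is the action of the translation on affine coroots,
\begin{equation*}
t_{\lambda}(\gamma^{\vee} + k\widetilde{\delta}) = \gamma^{\vee} + (k - \pair{\lambda}{\gamma^{\vee}})\widetilde{\delta},
\end{equation*}
which can be verified directly from the defining relation $s_{0} = s_{\vartheta} t_{-\vartheta}$ in $\Waf^{\vee}$ by testing on $\alpha_{0}^{\vee} = \widetilde{\delta} - \vartheta^{\vee}$. Using this formula, the hypothesis $\beta^{\vee} \in \Delta^{\vee, +}_{\af} \cap t_{\lambda}^{-1}\Delta^{\vee, -}_{\af}$ decomposes into two constraints: (a) (positivity of $\beta^{\vee}$) either $k > 0$, or $k = 0$ and $\gamma \in \Delta^{+}$; (b) (negativity of $t_{\lambda}\beta^{\vee}$) either $k - m < 0$, or $k - m = 0$ and $\gamma \in \Delta^{-}$.

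I would then enumerate the only three combinations of $(k, \sgn(\gamma))$ compatible with (a) and (b): (i) $k = 0$ and $\gamma \in \Delta^{+}$; (ii) $k > 0$ and $\gamma \in \Delta^{+}$; (iii) $k > 0$ and $\gamma \in \Delta^{-}$. In case (i), (b) forces $-m < 0$, i.e., $m > 0$, so $k/m = 0 \in [0,1]$. In case (ii), $\gamma \in \Delta^{+}$ rules out the equality subcase of (b), so $k < m$ and hence $0 < k/m < 1$. In case (iii), (b) allows $k = m$ or $k < m$, giving $0 < k/m \le 1$. In each case $m \neq 0$, so $k/m \in \BQ \cap [0,1]$.

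The argument is essentially mechanical; the only delicate point is fixing the correct sign in the translation formula, and this is resolved by the direct check on $s_{0}$ and $\alpha_{0}^{\vee}$ indicated above. Once the sign is right, the lemma reduces to bookkeeping on positive versus negative affine coroots.
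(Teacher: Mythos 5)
Your proof is correct, but it takes a different route from the paper. The paper does not argue from the definition of $\Daf{\lambda}$ directly: it quotes Macdonald's explicit description of this inversion set, namely $\Daf{\lambda} = \{ \alpha^{\vee} + k \wti{\delta} \mid \chi(\alpha^{\vee}) \leq k < \chi(\alpha^{\vee}) + \pair{\lambda}{\alpha^{\vee}} \}$ with $\chi$ the characteristic function of $-\Delta^{\vee,+}$, and then reads off $0 \le \deg(\beta^{\vee}) \le \pair{\lambda}{\overline{\beta^{\vee}}}$ (and $\pair{\lambda}{\overline{\beta^{\vee}}} \neq 0$) in one line. Your argument instead unwinds $\Delta^{\vee,+}_{\af} \cap t_{\lambda}^{-1}\Delta^{\vee,-}_{\af}$ using the level-zero translation action $t_{\lambda}(\gamma^{\vee} + k\wti{\delta}) = \gamma^{\vee} + (k - \pair{\lambda}{\gamma^{\vee}})\wti{\delta}$ and does the three-case bookkeeping by hand; in effect you re-derive exactly the inequality content of Macdonald's formula, and your case analysis (including the exclusion of the equality subcases when $\gamma \in \Delta^{+}$) is complete and sound. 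What the paper's choice buys is reuse: the cited description (its equation \eqref{eq:Inv}) is invoked repeatedly later (Lemma~\ref{lem:map_for_forgetful_map2}, the affine reflection order proposition, the proof of Theorem~\ref{thm:im_forgetful}), so stating it once as a black box is economical; what your version buys is self-containedness, proving the lemma from the definition plus the standard translation formula of Kac, Section~6.5, which the paper already cites. One small caveat: testing the sign of the translation formula on $\alpha_{0}^{\vee}$ alone only calibrates the convention rather than proving the formula for all affine coroots, so you should either cite the standard formula or note that it follows for all $\gamma^{\vee} + k\wti{\delta}$ by the level-zero action of $t_{\lambda} \in \Wex^{\vee}$; this is a presentational point, not a gap.
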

\begin{proof}
Let $\chi: \Delta^{\vee} = \{ \gamma^{\vee} \mid \gamma \in \Delta \} \rightarrow \{0, 1\}$ denote the characteristic function of $-\Delta^{\vee, +}$; i.e., 
\begin{equation}
\chi(\gamma^{\vee}) := \begin{cases} 0 & \text{if $\gamma^{\vee} \in \Delta^{\vee, +}$}, \\ 1 & \text{if $\gamma^{\vee} \in -\Delta^{\vee, +}$}. \end{cases} 
\end{equation}
By \cite[equation~(1) in the proof of (2.4.1)]{Mac}, 
we have ($\pair{\lambda}{\overline{\beta^{\vee}}} \not= 0$ and) 
\begin{align}
\Daf{\lambda} = \{ \alpha^{\vee} + k \wti{\delta} \mid k \in \BZ, \ \chi(\alpha^{\vee}) \leq k < \chi(\alpha^{\vee}) + \pair{\lambda}{\alpha^{\vee}} \}. \label{eq:Inv}
\end{align}
Since $\chi(\overline{\beta^{\vee}}) = 0$ or $1$, 
we see that $0 \leq \deg(\beta^{\vee}) \leq \pair{\lambda}{\overline{\beta^{\vee}}}$, 
and hence $0 \leq \deg(\beta^{\vee})/\pair{\lambda}{\overline{\beta^{\vee}}} \leq 1$. 
Also, it is obvious that $\deg(\beta^{\vee})/\pair{\lambda}{\overline{\beta^{\vee}}} \in \BQ$ 
since $\deg(\beta^{\vee}) \in \BZ$ and $\pair{\lambda}{\overline{\beta^{\vee}}} \in \BZ$. This proves the lemma. 
\end{proof}


\begin{lemma}\label{lem:map_for_forgetful_map2}
Let $\beta^\vee \in \Daf{\lambda}$. 
Then we have $\overline{\beta} \in (\Delta^{+}(\lambda)_{>0}) \sqcup (- \Delta^{+}(\lambda)_{<0})$. 
\end{lemma}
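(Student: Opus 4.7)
The plan is to derive this directly from the explicit description of $\Daf{\lambda}$ recalled in the proof of Lemma~\ref{lem:map_for_forgetful_map1} (the Macdonald formula \eqref{eq:Inv}), by case-splitting on the sign of the finite part $\overline{\beta^{\vee}}$.

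First, I would write $\beta^{\vee} = \alpha^{\vee} + k\wti{\delta}$ with $\alpha \in \Delta$ and $k \in \BZ$, so that $\overline{\beta^{\vee}} = \alpha^{\vee}$ and $\overline{\beta} = \alpha$. The membership $\beta^{\vee} \in \Daf{\lambda}$ together with \eqref{eq:Inv} yields the two-sided bound
\begin{equation}
\chi(\alpha^{\vee}) \le k < \chi(\alpha^{\vee}) + \pair{\lambda}{\alpha^{\vee}}.
\end{equation}
In particular, this interval is nonempty, which forces $\pair{\lambda}{\alpha^{\vee}} \ge 1$ when $\chi(\alpha^{\vee}) = 0$, and $\pair{\lambda}{\alpha^{\vee}} \le -1$ when $\chi(\alpha^{\vee}) = 1$ (the pairing cannot be zero). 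This already forbids the case $\alpha \in \De{\lambda} \cup (-\De{\lambda})$.

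Next I would split into the two cases according to the value of $\chi$. If $\alpha^{\vee} \in \Delta^{\vee,+}$ (so $\alpha \in \Delta^{+}$ and $\chi(\alpha^{\vee}) = 0$), the inequality above gives $\pair{\lambda}{\alpha^{\vee}} > 0$, hence $\alpha \in \Dp{\lambda}$, placing $\overline{\beta}$ in the first component of the disjoint union. If instead $\alpha^{\vee} \in -\Delta^{\vee,+}$ (so $-\alpha \in \Delta^{+}$ and $\chi(\alpha^{\vee}) = 1$), the inequality gives $\pair{\lambda}{\alpha^{\vee}} > 0$ as well, i.e., $\pair{\lambda}{(-\alpha)^{\vee}} < 0$, so $-\alpha \in \Dn{\lambda}$ and $\overline{\beta} = \alpha \in -\Dn{\lambda}$.

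There is no real obstacle here: the content is entirely contained in the Macdonald description \eqref{eq:Inv} that was already cited in the previous lemma; the only care needed is to interpret the shorthand $\overline{\beta}$ as the finite root $\alpha$ whose coroot is $\overline{\beta^{\vee}}$, and to observe that the bound on $k$ simultaneously encodes both the nonemptiness of the defining interval and the sign of $\pair{\lambda}{\alpha^{\vee}}$.
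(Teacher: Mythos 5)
Your core argument---extracting from \eqref{eq:Inv} that $\pair{\lambda}{\overline{\beta^{\vee}}} > 0$ and then splitting on whether $\overline{\beta}$ is a positive or a negative root---is exactly the paper's proof, and the case analysis in your final paragraph is correct.

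However, the sign claim in your first paragraph is false and should be fixed. Nonemptiness of the interval $\chi(\alpha^{\vee}) \le k < \chi(\alpha^{\vee}) + \pair{\lambda}{\alpha^{\vee}}$ forces its length $\pair{\lambda}{\alpha^{\vee}}$ to be at least $1$ \emph{regardless} of whether $\chi(\alpha^{\vee})$ equals $0$ or $1$; the assertion that $\chi(\alpha^{\vee}) = 1$ forces $\pair{\lambda}{\alpha^{\vee}} \le -1$ is wrong (it would make the interval empty), and it contradicts your own later, correct statement that in this case ``the inequality gives $\pair{\lambda}{\alpha^{\vee}} > 0$ as well,'' which is the one you actually use. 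Note that if one followed the first-paragraph claim in the case $\alpha^{\vee} \in -\Delta^{\vee,+}$, one would conclude $\overline{\beta} \in -\Dp{\lambda}$ rather than $-\Dn{\lambda}$, i.e.\ the wrong component. Delete or correct that sentence; with the uniform bound $\pair{\lambda}{\overline{\beta^{\vee}}} \ge 1$ in hand, the rest of your argument goes through and coincides with the paper's.
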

\begin{proof}
By \eqref{eq:Inv}, we see that $\pair{\lambda}{\overline{\beta^{\vee}}} > 0$. 
If $\overline{\beta} \in \Delta^{+}$, then we have $\overline{\beta} \in \Delta^{+}(\lambda)_{>0}$. 
If $\overline{\beta} \in -\Delta^{+}$, then we have $-\overline{\beta} \in \Delta^{+}(\lambda)_{<0}$. 
This proves the lemma. 
\end{proof}

By Lemmas~\ref{lem:map_for_forgetful_map1} and \ref{lem:map_for_forgetful_map2}, we can define a map $\Phi : \Daf{\lambda} \rightarrow \BQ_{\ge 0} \times ((\Delta^{+}(\lambda)_{>0}) \sqcup (-\Delta^{+}(\lambda)_{<0}))$ by
\begin{align}
\Phi(\beta^{\vee}) := \left( \frac{\deg(\beta^{\vee})}{\pair{\lambda}{\overline{\beta^{\vee}}}}, \overline{\beta} \right); 
\end{align}
it is obvious that $\Phi$ is injective. 

Take a reflection order ${\vtl} \in \RO(\lambda, \Delta^{+})$, and let 
${\prec} \in \RO(\lambda, w(\lambda)\Delta^{+})$ be the reflection order on $w(\lambda)\Delta^{+}$ corresponding to $\vtl$ 
by the bijection $\RO(\lambda, \Delta^{+}) \rightarrow \RO(\lambda, w(\lambda)\Delta^{+})$, given by Proposition~\ref{prop:bij_RO}. 
We define a total order $\prec^{*}$ on $w(\lambda)\Delta^{+}$ as follows: $\alpha \prec^{*} \beta$ if and only if $\beta \prec \alpha$. 
Then, $\prec^{*}$ is also a reflection order on $w(\lambda) \Delta^{+}$, under which we regard $(\Dp{\lambda}) \sqcup (-\Dn{\lambda}) \subset w(\lambda)\Delta^{+}$ as a totally ordered set. 
By using the usual total order on $\BQ_{\ge 0}$, we can define the lexicographic order on $\BQ_{\ge 0} \times ((\Dp{\lambda}) \sqcup (-\Dn{\lambda}))$, 
by which $\BQ_{\ge 0} \times ((\Dp{\lambda}) \sqcup (-\Dn{\lambda}))$ is a totally ordered set. 
Thus, the injection $\Phi$ induces a total order on $\Daf{\lambda}$, denoted by $<$. 
We see that $<$ is an affine reflection order on $\Daf{\lambda}$ in the sense below. 

\begin{definition}[{cf. \cite[p.~662, Theorem]{Papi}}] \label{def:reflection_order}
Let $L \subset \Delta_{\af}^{\vee, +}$ be a totally ordered set, and $<$ a total order on $L$. 
An order $<$ is called an \textit{affine reflection order} if the following hold: 
\begin{enu}
\item If $\alpha^{\vee} + \beta^{\vee} \in \Delta_{\af}^{\vee, +}$ for $\alpha^{\vee}, \beta^{\vee} \in L$, then $\alpha^{\vee} + \beta^{\vee} \in L$. In addition, either $\alpha^{\vee} < \alpha^{\vee} + \beta^{\vee} < \beta^{\vee}$ or $\beta^{\vee} < \alpha^{\vee} + \beta^{\vee} < \alpha^{\vee}$ holds; 
\item If $\alpha^{\vee} + \beta^{\vee} \in L$ for $\alpha^{\vee}, \beta^{\vee} \in \Delta_{\af}^{\vee}$, then either of the following holds: (a) $\alpha^{\vee} \in L$ and $\alpha^{\vee} < \alpha^{\vee} + \beta^{\vee}$, or (b) $\beta^{\vee} \in L$ and $\beta^{\vee} < \alpha^{\vee} + \beta^{\vee}$. 
\end{enu}
\end{definition}

\begin{proposition}[{cf. \cite[Proposition~3.1.8]{NNS}}]
The total order $<$ on $\Daf{\lambda}$ induced by the injection $\Phi$ is an affine reflection order. 
\end{proposition}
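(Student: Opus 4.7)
The plan is to verify the two axioms of Definition~\ref{def:reflection_order} for the total order $<$ on $L := \Daf{\lambda}$ induced by $\Phi$. For each $\alpha^{\vee} \in L$ write $\alpha^{\vee} = \overline{\alpha^{\vee}} + \deg(\alpha^{\vee}) \wti{\delta}$, and set $a_{\alpha} := \pair{\lambda}{\overline{\alpha^{\vee}}}$, which is strictly positive by Lemma~\ref{lem:map_for_forgetful_map1}, so that $\Phi(\alpha^{\vee}) = (\deg(\alpha^{\vee})/a_{\alpha}, \overline{\alpha})$. Throughout, the lexicographic order on the target of $\Phi$ uses the usual order on $\BQ_{\ge 0}$ in the first coordinate and the reversed reflection order $\prec^{*}$ in the second.

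For axiom~(1), fix $\alpha^{\vee}, \beta^{\vee} \in L$ with $\alpha^{\vee}+\beta^{\vee} \in \Delta_{\af}^{\vee, +}$. First I would verify $\alpha^{\vee}+\beta^{\vee} \in L$: the finite part $\overline{\alpha^{\vee}}+\overline{\beta^{\vee}}$ cannot vanish, since otherwise $a_{\alpha} = -a_{\beta}$, contradicting positivity of both; so it equals some nonzero coroot $\eta^{\vee}$ with $\pair{\lambda}{\eta^{\vee}} = a_{\alpha}+a_{\beta} > 0$, and the degree bounds from \eqref{eq:Inv} for $\alpha^{\vee}$ and $\beta^{\vee}$ add to the analogous bound for their sum. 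Next, assume WLOG $\alpha^{\vee} < \beta^{\vee}$ and split on whether the first $\Phi$-coordinates coincide. When $\deg(\alpha^{\vee})/a_{\alpha} < \deg(\beta^{\vee})/a_{\beta}$, the mediant inequality places $(\deg(\alpha^{\vee})+\deg(\beta^{\vee}))/(a_{\alpha}+a_{\beta})$ strictly between, which is exactly $\alpha^{\vee} < \alpha^{\vee}+\beta^{\vee} < \beta^{\vee}$. When the first coordinates agree, their common value equals that of $\Phi(\alpha^{\vee}+\beta^{\vee})$ as well, so the comparison reduces to the second coordinate; the inequality $\alpha^{\vee} < \beta^{\vee}$ translates to $\overline{\beta} \prec \overline{\alpha}$ in $w(\lambda)\Delta^{+}$, and since $\pair{\lambda}{\eta^{\vee}} > 0$ forces $\eta \in \Dp{\lambda} \subset w(\lambda)\Delta^{+}$ (so $\eta^{\vee} \in w(\lambda)\Delta^{\vee, +}$), Corollary~\ref{cor:reflection_coroot_arbitrary} applied to $\prec$ yields $\overline{\beta} \prec \eta \prec \overline{\alpha}$, and reversing to $\prec^{*}$ gives the required lex inequality.

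For axiom~(2), suppose $\gamma^{\vee} := \alpha^{\vee}+\beta^{\vee} \in L$ with $\alpha^{\vee}, \beta^{\vee} \in \Delta_{\af}^{\vee, +}$, and split on imaginary versus real summands. If $\alpha^{\vee} = m\wti{\delta}$ is imaginary with $m \ge 1$, then $\beta^{\vee}$ is real with $\overline{\beta^{\vee}} = \overline{\gamma^{\vee}}$ and $\deg(\beta^{\vee}) = \deg(\gamma^{\vee})-m$; its positivity combined with the bound in \eqref{eq:Inv} for $\gamma^{\vee}$ forces $\beta^{\vee} \in L$, and $\Phi(\beta^{\vee})$ shares the second coordinate of $\Phi(\gamma^{\vee})$ while having strictly smaller first coordinate, giving $\beta^{\vee} < \gamma^{\vee}$. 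When both $\alpha^{\vee}, \beta^{\vee}$ are real, the case $\overline{\alpha^{\vee}}+\overline{\beta^{\vee}} = 0$ forces $\overline{\gamma^{\vee}} = 0$, contradicting $\gamma^{\vee} \in L$. So $\overline{\alpha^{\vee}}, \overline{\beta^{\vee}}$ are nonzero coroots summing to $\overline{\gamma^{\vee}}$, and since $a_{\alpha}+a_{\beta} = \pair{\lambda}{\overline{\gamma^{\vee}}} > 0$, at least one, say $a_{\alpha}$, is strictly positive. I would then use \eqref{eq:Inv} to show $\alpha^{\vee} \in L$ (the degree bound for $\alpha^{\vee}$ follows by subtracting the admissibility constraint on $\beta^{\vee}$ from that on $\gamma^{\vee}$), and handle the ordering $\alpha^{\vee} < \gamma^{\vee}$ by the same mediant-or-Corollary~\ref{cor:reflection_coroot_arbitrary} dichotomy as in axiom~(1).

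The hard part will be the real-real subcase of axiom~(2): one must extract from $\gamma^{\vee} \in L$ alone which summand lies in $L$ and satisfies the sharp degree bound of \eqref{eq:Inv}, in all sign configurations of $\overline{\alpha^{\vee}}, \overline{\beta^{\vee}}$. The key technical input is the strict positivity of $\pair{\lambda}{\overline{\cdot}}$ on $L$ (Lemma~\ref{lem:map_for_forgetful_map1}), which together with the interaction between $\chi$ and positivity isolates the admissible summand; verifying the ordering against $\gamma^{\vee}$ then reuses the same mediant / reflection-order machinery developed for axiom~(1).
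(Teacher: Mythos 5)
Your treatment of axiom (1) is essentially the paper's argument (mediant inequality when the ratios differ, Corollary~\ref{cor:reflection_coroot_arbitrary} when they tie), up to one small slip: $\pair{\lambda}{\eta^{\vee}}>0$ does \emph{not} force $\eta\in\Dp{\lambda}$, since $\eta$ may be a negative root with $-\eta\in\Dn{\lambda}$; the fact you actually need, $\eta \in (\Dp{\lambda})\sqcup(-\Dn{\lambda})\subset w(\lambda)\Delta^{+}$, follows instead from Lemma~\ref{lem:map_for_forgetful_map2} applied to $\alpha^{\vee}+\beta^{\vee}\in\Daf{\lambda}$. The genuine gap is in the real--real subcase of axiom (2), which you yourself flag as the hard part but whose proposed route fails. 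You select the summand with $\pair{\lambda}{\overline{\,\cdot\,}}>0$ and claim its membership in $\Daf{\lambda}$ follows ``by subtracting the admissibility constraint on $\beta^{\vee}$ from that on $\gamma^{\vee}$.'' This is false: in type $A_{2}$ with $\lambda=\rho$, the element $\gamma^{\vee}=\theta^{\vee}+\wti{\delta}\in\Daf{\lambda}$ decomposes as $(\alpha_{1}^{\vee}+\wti{\delta})+\alpha_{2}^{\vee}$ with both summands positive real affine coroots and both pairings strictly positive, yet $\alpha_{1}^{\vee}+\wti{\delta}\notin\Daf{\lambda}$ because its degree $1$ violates the bound $\deg<\pair{\rho}{\alpha_{1}^{\vee}}=1$ in \eqref{eq:Inv}; only $\alpha_{2}^{\vee}$ lies in $\Daf{\lambda}$. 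So positivity of the pairing does not isolate the admissible summand. The correct reduction, as in the paper, is that the complement $\Delta_{\af}^{\vee,+}\cap t_{\lambda}^{-1}\Delta_{\af}^{\vee,+}$ is closed under addition, so at least one summand lies in $\Daf{\lambda}$; one then fixes $\alpha^{\vee}\in\Daf{\lambda}$, $\beta^{\vee}\notin\Daf{\lambda}$ and proves $\alpha^{\vee}<\alpha^{\vee}+\beta^{\vee}$.

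Moreover, that ordering step is not ``the same mediant-or-corollary dichotomy as in axiom (1)'': since $\beta^{\vee}\notin\Daf{\lambda}$ you have no sign control on $\pair{\lambda}{\overline{\beta^{\vee}}}$, and the delicate cases are precisely the ties in the first $\Phi$-coordinate, namely $\overline{\beta}\in\De{\lambda}$ with $\deg(\beta^{\vee})=0$, and the case where both ratios equal $1$ (so that $\overline{\alpha}$ and $(\overline{\alpha^{\vee}+\beta^{\vee}})^{\vee}$ lie in $-\Dn{\lambda}$ while $\overline{\beta}\in\Dp{\lambda}$). There the paper uses, besides Lemma~\ref{lem:reflection_coroot} and Corollary~\ref{cor:reflection_coroot_arbitrary}, the specific block structure \eqref{eq:def_RO_2} of ${\prec}\in\RO(\lambda,w(\lambda)\Delta^{+})$ --- that $\Dp{\lambda}$ and $-\Dn{\lambda}$ precede $-\De{\lambda}$ --- and this is exactly where the hypothesis ${\vtl}\in\RO(\lambda,\Delta^{+})$ (rather than an arbitrary reflection order) enters. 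Your axiom-(1) machinery never invokes this structure, so it cannot close axiom (2) as sketched; you would need to add both the closedness reduction and the tie-breaking arguments based on \eqref{eq:def_RO_2}.
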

\begin{proof}
First, we show condition (1) in Definition~\ref{def:reflection_order}. 
Let $\alpha^{\vee}, \beta^{\vee} \in \Daf{\lambda}$, 
and assume that $\alpha^{\vee} + \beta^{\vee} \in \Delta_{\af}^{\vee, +}$. Then it is obvious that $\alpha^{\vee} + \beta^{\vee} \in \Daf{\lambda}$. 
Assume that $\alpha^{\vee} < \beta^{\vee}$. 
By the definition of $<$ on $\Daf{\lambda}$, we have either of the following: 
\begin{enu}
\item $\deg(\alpha^{\vee})/\pair{\lambda}{\overline{\alpha^{\vee}}} < \deg(\beta^{\vee})/\pair{\lambda}{\overline{\alpha^{\vee}}}$, 
\item $\deg(\alpha^{\vee})/\pair{\lambda}{\overline{\alpha^{\vee}}} = \deg(\beta^{\vee})/\pair{\lambda}{\overline{\beta^{\vee}}}$ and $\overline{\beta} \prec \overline{\alpha}$ (i.e., $\overline{\alpha} \prec^{*} \overline{\beta}$). 
\end{enu}
If (1) holds, then we have 
\begin{equation}
\frac{\deg(\alpha^{\vee})}{\pair{\lambda}{\overline{\alpha^{\vee}}}} < \frac{\deg(\alpha^{\vee}) + \deg(\beta^{\vee})}{\pair{\lambda}{\overline{\alpha^{\vee}}} + \pair{\lambda}{\overline{\beta^{\vee}}}} = \frac{\deg(\alpha^{\vee}+\beta^{\vee})}{\pair{\lambda}{\overline{\alpha^{\vee}+\beta^{\vee}}}} < \frac{\deg(\beta^{\vee})}{\pair{\lambda}{\overline{\beta^{\vee}}}}, 
\end{equation}
which implies that $\alpha^{\vee} < \alpha^{\vee}+\beta^{\vee} < \beta^{\vee}$. 
If (2) holds, then we have 
\begin{equation}
\frac{\deg(\alpha^{\vee})}{\pair{\lambda}{\overline{\alpha^{\vee}}}} = \frac{\deg(\alpha^{\vee}) + \deg(\beta^{\vee})}{\pair{\lambda}{\overline{\alpha^{\vee}}} + \pair{\lambda}{\overline{\beta^{\vee}}}} = \frac{\deg(\alpha^{\vee}+\beta^{\vee})}{\pair{\lambda}{\overline{\alpha^{\vee}+\beta^{\vee}}}} = \frac{\deg(\beta^{\vee})}{\pair{\lambda}{\overline{\beta^{\vee}}}}, 
\end{equation}
and $\overline{\beta} \prec (\overline{\alpha^{\vee}+\beta^{\vee}})^{\vee} \prec \overline{\alpha}$ by Corollary~\ref{cor:reflection_coroot_arbitrary}, since $\prec$ is a reflection order. 
Hence we deduce that $\alpha^{\vee} < \alpha^{\vee}+\beta^{\vee} < \beta^{\vee}$, as desired. 

Next, we show condition (2) in Definition~\ref{def:reflection_order}. 
Assume that $\alpha^{\vee} + \beta^{\vee} \in \Daf{\lambda}$. 
If $\alpha^{\vee}, \beta^{\vee} \in \Daf{\lambda}$, then the assertion is obvious by condition (1) in Definition~\ref{def:reflection_order}. 
If $\alpha^{\vee}, \beta^{\vee} \in \Delta_{\af}^{\vee, +} \setminus t_{\lambda}^{-1}\Delta_{\af}^{\vee, -}$ and $\alpha^{\vee} + \beta^{\vee} \in \Delta_{\af}^{\vee}$, 
then it is obvious that $\alpha^{\vee} + \beta^{\vee} \in \Delta_{\af}^{\vee, +} \setminus t_{\lambda}^{-1}\Delta_{\af}^{\vee, -}$, which is 
a contradiction. 
Hence we may assume that $\alpha^{\vee} \in \Daf{\lambda}$ and $\beta^{\vee} \notin \Daf{\lambda}$. 
We will show that $\alpha^{\vee} < \alpha^{\vee} + \beta^{\vee}$. 

First, consider the case that $\overline{\beta} \in \Delta^{+}$. 
Recall \eqref{eq:Inv}. It follws that $0 \le \deg(\alpha^{\vee})/\pair{\lambda}{\overline{\alpha^{\vee}}} \le 1$ 
by the assumption that $\alpha^{\vee} \in \Daf{\lambda}$. 
Also, by the assumption that $\beta^{\vee} \notin \Daf{\lambda}$, we have the following possibilities: 
(a): $\pair{\lambda}{\overline{\beta^{\vee}}} < 0 \le \deg(\beta^{\vee})$; 
(b-1): $0 \le \pair{\lambda}{\overline{\beta^{\vee}}} \le \deg(\beta^{\vee})$ and $\deg(\beta^{\vee}) \not= 0$; 
(b-2): $\pair{\lambda}{\overline{\beta^{\vee}}} = \deg(\beta^{\vee}) = 0$. 

Assume that (a) holds. Then we see that 
\begin{equation}
\frac{\deg(\alpha^{\vee} + \beta^{\vee})}{\pair{\lambda}{\overline{\alpha^{\vee} + \beta^{\vee}}}} = \frac{\deg(\alpha^{\vee}) + \deg(\beta^{\vee})}{\pair{\lambda}{\overline{\alpha^{\vee}}} + \pair{\lambda}{\overline{\beta^{\vee}}}} > \frac{\deg(\alpha^{\vee}) + \deg(\beta^{\vee})}{\pair{\lambda}{\overline{\alpha^{\vee}}}} \ge \frac{\deg(\alpha^{\vee})}{\pair{\lambda}{\overline{\alpha^{\vee}}}}. 
\end{equation}
Hence, by the definition of $<$ on $\Daf{\lambda}$, it follows that $\alpha^{\vee} < \alpha^{\vee} + \beta^{\vee}$. 

Next, assume that (b-1) holds. If $\pair{\lambda}{\overline{\beta^{\vee}}} = 0$, then the assertion is obvious since 
\begin{equation}
\frac{\deg(\alpha^{\vee} + \beta^{\vee})}{\pair{\lambda}{\overline{\alpha^{\vee} + \beta^{\vee}}}} = \frac{\deg(\alpha^{\vee}) + \deg(\beta^{\vee})}{\pair{\lambda}{\overline{\alpha^{\vee}}}} > \frac{\deg(\alpha^{\vee})}{\pair{\lambda}{\overline{\alpha^{\vee}}}}. 
\end{equation}
Assume that $\pair{\lambda}{\overline{\beta^{\vee}}} > 0$. Since 
\begin{equation}
\frac{\deg(\alpha^{\vee})}{\pair{\lambda}{\overline{\alpha^{\vee}}}} \le 1 \le \frac{\deg(\beta^{\vee})}{\pair{\lambda}{\overline{\beta^{\vee}}}}, 
\end{equation}
we deduce that 
\begin{equation}
\frac{\deg(\alpha^{\vee}) + \deg(\beta^{\vee})}{\pair{\lambda}{\overline{\alpha^{\vee}}} + \pair{\lambda}{\overline{\beta^{\vee}}}} \ge \frac{\deg(\alpha^{\vee})}{\pair{\lambda}{\overline{\alpha^{\vee}}}}, 
\end{equation}
where the equality holds if and only if 
\begin{equation}
\frac{\deg(\alpha^{\vee})}{\pair{\lambda}{\overline{\alpha^{\vee}}}} = 1 = \frac{\deg(\beta^{\vee})}{\pair{\lambda}{\overline{\beta^{\vee}}}}. \label{eq:equal_condition} 
\end{equation}
If \eqref{eq:equal_condition} fails to hold, then the assertion follows by the definition of $<$ on $\Daf{\lambda}$. 
Let us assume that \eqref{eq:equal_condition} holds. 
We have 
\begin{equation}
\deg(\alpha^{\vee}) = \pair{\lambda}{\overline{\alpha^{\vee}}}, \label{eq:equal_1}
\end{equation}
\begin{equation}
\deg(\alpha^{\vee} + \beta^{\vee}) = \pair{\lambda}{\overline{\alpha^{\vee} + \beta^{\vee}}}, \label{eq:equal_2}
\end{equation}
\begin{equation}
\deg(\beta^{\vee}) = \pair{\lambda}{\overline{\beta^{\vee}}} > 0. \label{eq:equal_3}
\end{equation}
Suppose, for a contradiction, that $\overline{\alpha} \prec (\overline{\alpha^{\vee}+\beta^{\vee}})^{\vee}$. 
Then, \eqref{eq:equal_1} and \eqref{eq:Inv} imply that $\overline{\alpha} \in -\Dn{\lambda}$ since $\alpha^{\vee} \in \Daf{\lambda}$. 
Similarly, we have $(\overline{\alpha^{\vee}+\beta^{\vee}})^{\vee} \in -\Dn{\lambda}$ by \eqref{eq:equal_2} and \eqref{eq:Inv}. 
Here, recall that $\overline{\beta} \in \Delta^{+}$. 
Hence we see that $\overline{\beta} \in \Dp{\lambda}$. 
By \eqref{eq:def_RO_2}, we deduce that $\overline{\beta} \prec \overline{\alpha} \prec (\overline{\alpha^{\vee}+\beta^{\vee}})^{\vee}$. 
This contradicts that $\prec$ is a reflection order by Corollary~\ref{cor:reflection_coroot_arbitrary}. 
Thus we have $(\overline{\alpha^{\vee}+\beta^{\vee}})^{\vee} \prec \overline{\alpha}$. Therefore, we conclude that $\overline{\alpha} \prec^{*} (\overline{\alpha^{\vee}+\beta^{\vee}})^{\vee}$ 
and $\alpha^{\vee} < \alpha^{\vee} + \beta^{\vee}$, as desired. 

Now, assume that (b-2) holds. Then we have 
\begin{equation}
\frac{\deg(\alpha^{\vee} + \beta^{\vee})}{\pair{\lambda}{\overline{\alpha^{\vee}+\beta^{\vee}}}} = \frac{\deg(\alpha^{\vee})}{\pair{\lambda}{\overline{\alpha^{\vee}}}}. 
\end{equation}
Hence we need to show that $(\overline{\alpha^{\vee}+\beta^{\vee}})^{\vee} \prec \overline{\alpha}$. 
Since $\pair{\lambda}{\overline{\beta^{\vee}}} = 0$ and $\overline{\beta} \in \Delta^{+}$, 
we have $\overline{\beta} \in \De{\lambda}$. 
Also, since $\alpha^{\vee}+\beta^{\vee} \in \Daf{\lambda}$, we deduce from Lemma~\ref{lem:map_for_forgetful_map2} that $(\overline{\alpha^{\vee}+\beta^{\vee}})^{\vee} \in (\Dp{\lambda}) \sqcup (-\Dn{\lambda})$. 
By \eqref{eq:def_RO_2}, we see that $(\overline{\alpha^{\vee}+\beta^{\vee}})^{\vee} \prec -\overline{\beta}$. 
Since $\prec$ is a reflection order, Lemma~\ref{lem:reflection_coroot} implies that $(\overline{\alpha^{\vee}+\beta^{\vee}})^{\vee} \prec \overline{\alpha} \prec -\overline{\beta}$, as desired. 

Finally, we consider the case that $\overline{\beta} \in \Delta^{-}$. 
In this case, \eqref{eq:Inv} implies that $\deg(\beta^{\vee}) > \pair{\lambda}{\overline{\beta^{\vee}}}$. 
Note that $\deg(\beta^{\vee}) > 0$ since $\beta^{\vee} \in \Delta_{\af}^{\vee, +}$. 
Hence the assertion follows by the same argument as in the case that (a) holds, or (b-1) holds, with $\deg(\beta^{\vee}) \not= \pair{\lambda}{\overline{\beta^{\vee}}}$. 
This proves the proposition. 
\end{proof}

Thanks to this proposition, we can apply \cite[p.~662, Theorem]{Papi} to deduce that there exists a (unique) reduced expression $t_{\lambda} = s_{i_r} \cdots s_{i_2} s_{i_1} (\pi^\vee)^{-1}$ 
such that $\beta_{1}^{\sL} < \cdots < \beta_{r}^{\sL}$; 
such a reduced expression is said to be \textit{suitable} for $\vtl$. 
Also, we can take the $\lambda$-chain $\Gamma_{\vtl}(\lambda) = (\gamma_{1}^{\sL}, \ldots, \gamma_{r}^{\sL})$, which corresponds to the suitable reduced expression $t_{\lambda} = s_{i_r} \cdots s_{i_2} s_{i_1} (\pi^\vee)^{-1}$ 
by the relation between reduced expressions and reduced $\lambda$-chains, described in Section~\ref{subsec:reduced_expression_vs_chain_of_roots}; 
we also say that $\Gamma_{\vtl}(\lambda)$ is \emph{suitable} for $\vtl$. 

\section{Interpolated QLS paths}\label{sec:def_IQLS}

We introduce interpolated QLS paths, which generalize quantum Lakshmibai-Seshadri paths (QLS paths for short), 
defined in \cite[Section~3]{LNSSS2}. 
We fix $\lambda \in P$ and ${\vtl} \in \RO(\lambda, \Delta^{+})$. 

\subsection{Integrality conditions} 

In order to introduce interpolated QLS paths, we need the following.

\begin{definition} \label{def:integrality}
Let $x, y \in W$, and $\sigma \in \BQ$. 
\begin{enu}
\item If there exists a directed path $x = x_{0} \xrightarrow{\gamma_{1}} x_{1} \xrightarrow{\gamma_{2}} \cdots \xrightarrow{\gamma_{r}} x_{r} = y$ in $\QBG(W)$
such that $\gamma_{1}, \ldots, \gamma_{r} \in \Dp{\lambda}$, $\gamma_{1} \vtr \cdots \vtr \gamma_{r}$, 
and $\sigma \pair{\lambda}{\gamma_{k}^{\vee}} \in \BZ$ for all $k = 1, \ldots, r$, 
then we write $x \xRightarrow{(\lambda, +)}_{\raisebox{1ex}{\scriptsize{$\sigma$}}} y$. 
\item If there exists a directed path $x = x_{0} \xrightarrow{\gamma_{1}} x_{1} \xrightarrow{\gamma_{2}} \cdots \xrightarrow{\gamma_{r}} x_{r} = y$ in $\BG(W)$
such that $\gamma_{1}, \ldots, \gamma_{r} \in \Dp{\lambda}$, $\gamma_{1} \vtr \cdots \vtr \gamma_{r}$, 
and $\sigma \pair{\lambda}{\gamma_{k}^{\vee}} \in \BZ$ for all $k = 1, \ldots, r$, 
then we write $x \xRightarrow{(\lambda, +, q=0)}_{\raisebox{1ex}{\scriptsize{$\sigma$}}} y$. 
\item If there exists a directed path $x = x_{0} \xrightarrow{\gamma_{1}} x_{1} \xrightarrow{\gamma_{2}} \cdots \xrightarrow{\gamma_{r}} x_{r} = y$ in $\QBG(W)$
such that $\gamma_{1}, \ldots, \gamma_{r} \in \Dn{\lambda}$, $\gamma_{1} \vtr \cdots \vtr \gamma_{r}$, 
and $\sigma \pair{\lambda}{\gamma_{k}^{\vee}} \in \BZ$ for all $k = 1, \ldots, r$, 
then we write $x \xRightarrow{(\lambda, -)}_{\raisebox{1ex}{\scriptsize{$\sigma$}}} y$. 
\item If there exists a directed path $x = x_{0} \xrightarrow{\gamma_{1}} x_{1} \xrightarrow{\gamma_{2}} \cdots \xrightarrow{\gamma_{r}} x_{r} = y$ in $\BG(W)$
such that $\gamma_{1}, \ldots, \gamma_{r} \in \Dn{\lambda}$, $\gamma_{1} \vtr \cdots \vtr \gamma_{r}$, 
and $\sigma \pair{\lambda}{\gamma_{k}^{\vee}} \in \BZ$ for all $k = 1, \ldots, r$, 
then we write $x \xRightarrow{(\lambda, -, q=0)}_{\raisebox{1ex}{\scriptsize{$\sigma$}}} y$. 
\end{enu}
If $\sigma = 1$, then we omit $\sigma$, and write as $x \xRightarrow{(\lambda, \pm)} y$ or $x \xRightarrow{(\lambda, \pm, q=0)} y$. 
\end{definition}

\subsection{Definition of interpolated QLS paths}

\begin{definition} \label{def:iQLS}
A triple $(\underline{x}; \underline{y}; \underline{\sigma})$ of 
sequences $\underline{x}: x_{1}, \ldots, x_{s}$ of elements in $W$ such that $x_{i} \not= x_{i+1}$ for all $1 \le i \le s-1$, 
$\underline{y}: y_{1}, \ldots, y_{s-1}$ of elements in $W$ such that $y_{i} \not= y_{i+1}$ for all $1 \le i \le s-2$, 
and $\underline{\sigma}: \sigma_{0}, \ldots, \sigma_{s}$ of rational numbers, 
is called an \textit{interpolated quantum Lakshmibai-Seshadri} (QLS) \emph{path} of shape $\lambda$ 
if the following conditions hold: 
\begin{enu}
\item $0 = \sigma_{0} < \sigma_{1} < \cdots < \sigma_{s} = 1$; 
\item for all $1 \le i \le s-1$, we have $x_{i+1} \xRightarrow{(\lambda, -)}_{\raisebox{1ex}{\scriptsize{$\sigma_{i}$}}} y_{i}$; 
\item for all $1 \le i \le s-1$, we have $y_{i} \xRightarrow{(\lambda, +)}_{\raisebox{1ex}{\scriptsize{$\sigma_{i}$}}} x_{i}$. 
\end{enu}
Let $\IQLS(\lambda)$ denote the set of all interpolated QLS paths of shape $\lambda$. 
\end{definition}

For $\eta = (\underline{x}; \underline{y}; \underline{\sigma}) \in \IQLS(\lambda)$, 
a sequence $\underline{y}$ may be empty; 
in such a case, we write $\eta = (\underline{x}; ; \underline{\sigma})$. 

\begin{remark} \label{rem:IQLS=QLS}
Let $\lambda \in P^{+}$ be a \emph{regular} dominant weight, i.e., $\pair{\lambda}{\alpha_{i}^{\vee}} > 0$ for all $i \in I$. 
Then we can identify $\IQLS(\lambda)$ with $\QLS(\lambda)$ as follows. 
Since $\Dn{\lambda} = \emptyset$, all paths of the form $w \xRightarrow{(\lambda, -)}_{\raisebox{1ex}{\scriptsize{$\sigma$}}} u$, with $w, u \in W$ and $\sigma \in \BQ$, are the trivial one. 
Hence $\IQLS(\lambda)$ is the set of all elements $\eta = (x_{1}, \ldots, x_{s}; x_{2}, \ldots, x_{s}; \sigma_{0}, \ldots, \sigma_{s})$, with $x_{1}, \ldots, x_{s} \in W$ and $\sigma_{0}, \ldots, \sigma_{s} \in \BQ$, satisfying: 
\begin{enu}
\item $0 = \sigma_{0} < \sigma_{1} < \cdots < \sigma_{s} = 1$, 
\item for all $1 \le i \le s-1$, $x_{i+1} \xRightarrow{(\lambda, +)}_{\raisebox{1ex}{\scriptsize{$\sigma_{i}$}}} x_{i}$. 
\end{enu}
In addition, since $\Dp{\lambda} = \Delta^{+}$, we can use all the positive roots as labels of paths of the form $w \xRightarrow{(\lambda, +)}_{\raisebox{1ex}{\scriptsize{$\sigma$}}} u$, with $w, u \in W$ and $\sigma \in \BQ$. 
Therefore, the path $\eta' := (x_{1}, \ldots, x_{s}; \sigma_{0}, \ldots, \sigma_{s})$ is a QLS path, and hence we obtain a bijection $\IQLS(\lambda) \xrightarrow{\sim} \QLS(\lambda)$, given by $\eta \mapsto \eta'$. 
\end{remark}

\begin{remark} \label{rem:minuscule case}
If $\lambda \in P$ is a \emph{minuscule} weight, i.e., $\pair{\lambda}{\alpha^{\vee}} \in \{-1, 0, 1\}$ for all $\alpha \in \Delta$, then we see that 
\begin{equation}
\IQLS(\lambda) = \{ (w; ; 0, 1) \mid w \in W \}; 
\end{equation}
that is, $\IQLS(\lambda)$ consists only of the paths of the form $(w; ; 0, 1)$, with $w \in W$, i.e., straight-line paths. 
Indeed, since $\pair{\lambda}{\alpha^{\vee}} = \pm 1$ for all $\alpha \in \Dp{\lambda} \sqcup \Dn{\lambda}$, 
there is no $\sigma \in \BQ$, with $0 < \sigma < 1$, such that $\sigma \pair{\lambda}{\alpha^{\vee}} \in \BZ$. 
\end{remark}

As for an (ordinary) QLS path, we can define the initial (or final) direction and the weight for an interpolated QLS path. 

\begin{definition}
Let $\eta = (x_{1}, \ldots, x_{s}; y_{1}, \ldots, y_{s-1}; \sigma_{0}, \ldots, \sigma_{s}) \in \IQLS(\lambda)$ 
be an interpolated QLS path of shape $\lambda$. 
\begin{enu}
\item We set $\iota(\eta) := x_{1}$ and $\kappa(\eta) := x_{s}$; 
we call $\iota(\eta)$ the \textit{initial direction} of $\eta$, 
and $\kappa(\eta)$ the \textit{final direction} of $\eta$. 
\item We set 
\begin{equation}
\wt(\eta) := \sum_{k=1}^{s} (\sigma_{k} - \sigma_{k-1}) x_{k} \lambda, 
\end{equation}
and call it the \textit{weight} of $\eta$. 
\end{enu}
\end{definition}

Also, we define the negativity length of an interpolated QLS path, 
which is not defined for an (ordinary) QLS path. 

\begin{definition}
Let $\eta = (x_{1}, \ldots, x_{s}; y_{1}, \ldots, y_{s-1}; \sigma_{0}, \ldots, \sigma_{s}) \in \IQLS(\lambda)$ 
be an interpolated QLS path of shape $\lambda$. 
We set 
\begin{equation}
\nega(\eta) := \sum_{k=1}^{s-1} \ell(x_{k+1} \Rightarrow y_{k}), 
\end{equation}
and call it the \textit{negativity length} of $\eta$. 
\end{definition}


\begin{example} \label{ex:IQLS_n=2}
Assume that $\Fg$ is of type $A_{2}$. 
Let $\lambda = -\vpi_{1} + 2\vpi_{2}$. 
Let us consider the set $\IQLS(\lambda)$. 
Note that $\pair{\lambda}{\alpha_{1}^{\vee}} = -1$, $\pair{\lambda}{\alpha_{1}^{\vee}+\alpha_{2}^{\vee}} = 1$, and $\pair{\lambda}{\alpha_{2}^{\vee}} = 2$. 
Hence $\alpha_{1} \in \Dn{\lambda}$ and $\alpha_{1}+\alpha_{2}, \alpha_{2} \in \Dp{\lambda}$. 
If we define a total order $\vtl$ on $\Delta^{+}$ 
by: $\alpha_{1} \vtl \alpha_{1} + \alpha_{2} \vtl \alpha_{2}$, 
then we have ${\vtl} \in \RO(\lambda, \Delta^{+})$. 

First, we see that $\IQLS(\lambda)$ contains all straight-line paths $(x; ; 0, 1)$, $x \in W$. 
Let us consider the other paths. 
Observe that $\alpha_{2}$ is the unique positive root $\alpha \in \Delta^{+}$ such that $\sigma \pair{\lambda}{\alpha^{\vee}} \in \BZ$ for some $\sigma \in \BQ \cap (0, 1)$; note that we necessarily have $\sigma = 1/2$. 
Hence the paths $\eta \in \IQLS(\lambda)$ which are not straight-line paths are of the form 
$(xs_{2}, x; x; 0, 1/2, 1)$ for some $x \in W$ such that $x \xrightarrow{\alpha_{2}} x s_{2}$. 
Since $\alpha_{2}$ is a simple root, we have the directed edge $x \xrightarrow{\alpha_{2}} xs_{2}$ for all $x \in W$. 
Therefore, we conclude that 
\begin{equation}
\IQLS(\lambda) = \{ (x; ; 0, 1) \mid x \in W \} \sqcup \{ (xs_{2}, x; x; 0, 1/2, 1) \mid x \in W \}. 
\end{equation}
For $x \in W$, we have $\wt((x; ; 0, 1)) = x\lambda = x(-\vpi_{1}+2\vpi_{2})$. 
Also, since $s_{2}\lambda = -\lambda$, we have for $x \in W$, 
\begin{equation}
\wt((xs_{2}, x; x; 0, 1/2, 1)) = \frac{1}{2}xs_{2}\lambda + \frac{1}{2}x\lambda = -\frac{1}{2}x\lambda + \frac{1}{2}x\lambda = 0. 
\end{equation}

\end{example}

\begin{example} \label{ex:IQLS_n=2_2}
Assume that $\Fg$ is of type $A_{2}$. Let $\lambda = -\vpi_{1} + 3\vpi_{2}$. 
We consider the set $\IQLS(\lambda)$. 
Note that $\pair{\lambda}{\alpha_{1}^{\vee}} = -1$, $\pair{\lambda}{\alpha_{1}^{\vee} + \alpha_{2}^{\vee}} = 2$, and $\pair{\lambda}{\alpha_{2}^{\vee}} = 3$. Hence $\alpha_{1} \in \Dn{\lambda}$ and $\alpha_{1}+\alpha_{2}, \alpha_{2} \in \Dp{\lambda}$. 
If we define a total order $\vtl$ on $\Delta^{+}$ by: $\alpha_{1} \vtl \alpha_{1}+\alpha_{2} \vtl \alpha_{2}$, then we have ${\vtl} \in \RO(\lambda, \Delta^{+})$. 

As in Example~\ref{ex:IQLS_n=2}, we see that $\IQLS(\lambda)$ contains all straight-line paths $(x; ; 0, 1)$, $x \in W$. 
Let us consider the other paths; for each $\gamma \in \Delta^{+}$, we need to determine those $\sigma \in \BQ \cap (0, 1)$ for which $\sigma \pair{\lambda}{\gamma^{\vee}} \in \BZ$. 
If $\gamma = \alpha_{1}$, then there is no such $\sigma$. 
For $\gamma = \alpha_{1}+\alpha_{2}$, we see that $\sigma = 1/2$. 
If $\gamma = \alpha_{2}$, then $\sigma = 1/3, 2/3$. 

Now, we consider $\eta = (x_{1}, \ldots, x_{s}; y_{1}, \ldots, y_{s-1}; \sigma_{0}, \ldots, \sigma_{s}) \in \IQLS(\lambda)$, with $s \ge 2$. 
Let us consider the case that $\sigma_{s-1} = 1/3$. In this case, there is no $\sigma \in \BQ \cap (0,1)$, with $\sigma < 1/3$, such that $\sigma \pair{\lambda}{\gamma^{\vee}} \in \BZ$ for some $\gamma \in \Delta^{+}$. Hence it follows that $s = 2$. 
Then, we have $y_{1} = x_{2}$ since there is no $(1/3)$-path of the form $x \xRightarrow{(\lambda, -)}_{\raisebox{1ex}{\scriptsize{$1/3$}}} y$. Also, we see that $w_{1} \xrightarrow{\gamma_{1}} w_{2} \xrightarrow{\gamma_{2}} \cdots \xrightarrow{\gamma_{r}} w_{r+1}$ is a $(1/3)$-path if and only if $w_{1} \in W$, $r = 1$, $\gamma_{1} = \alpha_{2}$, and $w_{2} = w_{1}s_{2}$. 
Therefore, we deduce that 
\begin{equation}
\eta = (xs_{2}, x; x; 0,1/3,1), \quad x \in W. 
\end{equation}

Next, consider the case that $\sigma_{s-1} = 1/2$. In this case, $s = 2, 3$; if $s = 3$, then $\sigma_{s-2} = \sigma_{1} = 1/3$. 
Assume that $s = 2$. Then, by the same argument as above, we see that $y_{1} = x_{2}$ and $y_{1} \xrightarrow{\alpha_{1}+\alpha_{2}} x_{1}$. 
Hence we deduce that 
\begin{equation}
\eta = \underbrace{(s_{1}s_{2}, s_{2}; s_{2}; 0, 1/2, 1)}_{=: \, \eta_{1}}, \underbrace{(s_{2}s_{1}, s_{1}; s_{1}; 0, 1/2, 1)}_{=: \, \eta_{2}}, \underbrace{(e, s_{1}s_{2}s_{1}; s_{1}s_{2}s_{1}; 0, 1/2, 1)}_{=: \, \eta_{3}}. 
\end{equation}
If $s = 3$, then $y_{1} = x_{2}$ and $y_{1} \xrightarrow{\alpha_{2}} x_{1}$. 
By combining this fact with the result for $s = 2$, we deduce that 
\begin{equation}
\begin{split}
\eta &= \underbrace{(s_{1}, s_{1}s_{2}, s_{2}; s_{1}s_{2}, s_{2}; 0, 1/3, 1/2, 1)}_{=: \, \eta_{4}}, \underbrace{(s_{1}s_{2}s_{1}, s_{2}s_{1}, s_{1}; 0, 1/3, 1/2, 1)}_{=: \, \eta_{5}}, \\ 
& \quad \ \underbrace{(s_{2}, e, s_{1}s_{2}s_{1}; e, s_{1}s_{2}s_{1}; 0, 1/3, 1/2, 1)}_{=: \, \eta_{6}}. 
\end{split}
\end{equation}

Let us consider the remaining case that $\sigma_{s-1} = 2/3$; we necessarily have $s = 2, 3, 4$. 
Assume that $s = 2$. Then, by the same argument as for $\sigma_{s-1} = 1/3$, we see that 
\begin{equation}
\eta = (xs_{2}, x; x; 0,2/3,1), \quad x \in W. 
\end{equation}
Assume that $s = 3$. Then, we have $\sigma_{s-2} = \sigma_{1} = 1/3, 1/2$; note that $y_{1} = x_{2}$ in both cases. 
If $\sigma_{1} = 1/3$, then $y_{1} \xrightarrow{\alpha_{2}} x_{1}$. 
Hence we see that 
\begin{equation}
\eta = (x, xs_{2}, x; xs_{2}, x; 0, 1/3, 2/3, 1), \quad x \in W. 
\end{equation}
If $\sigma_{1} = 2/3$, then the argument for $\sigma_{s-1} = 1/2$ shows that 
\begin{equation}
(y_{1}, x_{1}) = (s_{2}, s_{1}s_{2}), (s_{1}, s_{2}s_{1}), (s_{1}s_{2}s_{1}, e). 
\end{equation}
Therefore, we deduce that 
\begin{equation}
\begin{split}
\eta &= \underbrace{(s_{1}s_{2}, s_{2}, e; s_{2}, e; 0, 1/2, 2/3, 1)}_{=: \, \eta_{7}}, \underbrace{(s_{2}s_{1}, s_{1}, s_{1}s_{2}; s_{1}, s_{1}s_{2}; 0, 1/2, 2/3, 1)}_{=: \, \eta_{8}}, \\ 
& \quad \ \underbrace{(e, s_{1}s_{2}s_{1}, s_{2}s_{1}; s_{1}s_{2}s_{1}, s_{2}s_{1}; 0, 1/2, 2/3, 1)}_{=: \, \eta_{9}}. 
\end{split}
\end{equation}
Finally, we consider the case $s = 4$. Then, $\sigma_{s-2} = \sigma_{2} = 1/2$ and $\sigma_{s-3} = \sigma_{1} = 1/3$. 
By combining the result for $s = 3$ and the one for $\sigma_{s-1} = 1/3$, we deduce that 
\begin{equation}
\begin{split}
\eta &= \underbrace{(s_{1}, s_{1}s_{2}, s_{2}, e; s_{1}s_{2}, s_{2}, e; 0, 1/3, 1/2, 2/3, 1)}_{=: \, \eta_{10}}, \\ 
& \quad \ \underbrace{(s_{1}s_{2}s_{1}, s_{2}s_{1}, s_{1}, s_{1}s_{2}; s_{2}s_{1}, s_{1}, s_{1}s_{2}; 0, 1/3, 1/2, 2/3, 1)}_{=: \, \eta_{11}}, \\ 
& \quad \ \underbrace{(s_{2}, e, s_{1}s_{2}s_{1}, s_{2}s_{1}; e, s_{1}s_{2}s_{1}, s_{2}s_{1}; 0, 1/3, 1/2, 2/3, 1)}_{=: \, \eta_{12}}. 
\end{split}
\end{equation}

Thus, we conclude that 
\begin{equation}
\begin{split}
\IQLS(\lambda) &= \{(x; ; 0, 1) \mid x \in W\} \\ 
& \quad \sqcup \{(xs_{2}, x; x; 0, 1/3, 1) \mid x \in W\} \\ 
& \quad \sqcup \{(xs_{2}, x; x; 0, 2/3, 1) \mid x \in W\} \\ 
& \quad \sqcup \{(x, xs_{2}, x; xs_{2}, x; 0, 1/3, 2/3, 1) \mid x \in W\} \\ 
& \quad \sqcup \{\eta_{1}, \eta_{2}, \ldots, \eta_{12}\}. 
\end{split}
\end{equation}
\end{example}

\subsection{Interpolated LS paths}

We also introduce interpolated Lakshmibai-Seshadri (LS) paths, 
which are the ``$q = 0$ version'' of interpolated QLS paths. 
Interpolated LS paths generalize (ordinary) LS paths. 

\begin{definition}
A triple $(\underline{x}; \underline{y}; \underline{\sigma})$ of sequences $\underline{x}: x_{1}, \ldots, x_{s}$ of elements in $W$, 
$\underline{y}: y_{1}, \ldots, y_{s-1}$ of elements in $W$, 
and $\underline{\sigma}: \sigma_{0}, \ldots, \sigma_{s}$ of rational numbers, 
is called an \textit{interpolated Lakshmibai-Seshadri} (LS) \emph{path} of shape $\lambda$ 
if the following conditions hold: 
\begin{enu}
\item $0 = \sigma_{0} < \sigma_{1} < \cdots < \sigma_{s} = 1$; 
\item for all $1 \le i \le s-1$, we have $x_{i+1} \xRightarrow{(\lambda, -, q=0)}_{\raisebox{1ex}{\scriptsize{$\sigma_{i}$}}} y_{i}$; 
\item for all $1 \le i \le s-1$, we have $y_{i} \xRightarrow{(\lambda, +, q=0)}_{\raisebox{1ex}{\scriptsize{$\sigma_{i}$}}} x_{i}$. 
\end{enu}
Let $\ILS(\lambda)$ denote the set of all interpolated LS paths of shape $\lambda$. 
\end{definition}

By the definition of interpolated LS paths, it is obvious that $\ILS(\lambda) \subset \IQLS(\lambda)$. 

\begin{example}
Assume that $\Fg$ is of type $A_{2}$, and let $\lambda = -\vpi_{1} + 2\vpi_{2}$. 
Recall Example~\ref{ex:IQLS_n=2}. 
It is obvious that $(x; ; 0, 1) \in \ILS(\lambda)$ for all $x \in W$. 
For $x \in W$, we see that $(xs_{2}, x; x; 0, 1/2, 1) \in \ILS(\lambda)$ 
if and only if $x \xrightarrow{\alpha_{2}} xs_{2}$ is a Bruhat edge. 
Since $x \xrightarrow{\alpha_{2}} xs_{2}$ is a Bruhat edge for $x = e, s_{1}, s_{2}s_{1}$, 
and it is a quantum edge for $x = s_{2}, s_{1}s_{2}, s_{1}s_{2}s_{1}$, 
we conclude that 
\begin{equation}
\ILS(\lambda) = \{ (x; ; 0, 1) \mid x \in W \} \sqcup \{ (xs_{2}, x; x; 0, 1/2, 1) \mid x = e, s_{1}, s_{2}s_{1} \}. 
\end{equation}
\end{example}

\begin{example}
Assume that $\Fg$ is of type $A_{2}$. Let $\lambda = -\vpi_{1} + 3\vpi_{2}$. From Example~\ref{ex:IQLS_n=2_2}, we can verify that 
\begin{equation}
\begin{split}
\ILS(\lambda) &= \{(x; ; 0,1) \mid x \in W\} \\ 
& \quad \sqcup \{(xs_{2}, x; x; 0, 1/3, 1) \mid x = e, s_{1}, s_{2}s_{1}\} \\ 
& \quad \sqcup \{(xs_{2}, x; x; 0, 2/3, 1) \mid x = e, s_{1}, s_{2}s_{1}\} \\ 
& \quad \sqcup \{\eta_{1}, \eta_{2}, \eta_{5}, \eta_{7}\}. 
\end{split}
\end{equation}
\end{example}


\section{The forgetful map} \label{sec:forgetful}

We construct a map (called the forgetful map) from $\CA(w, \Gamma)$ to $\IQLS(\lambda)$ 
for $w \in W$ and $\lambda \in P$, where $\Gamma$ is the suitable $\lambda$-chain corresponding to a fixed ${\vtl} \in \RO(\lambda, \Delta^{+})$. 
Note that our construction of the forgetful map is a generalization of the construction of the forgetful map for dominant integral weights explained in \cite{LNSSS2}. 
Also, the proof of properties of the forgetful map are similar to those in \cite{NNS}. 
In this section, we take and fix an arbitrary $\lambda \in P$. 

\subsection{Construction of the forgetful map} 

Let us fix ${\vtl} \in \RO(\lambda, \Delta^{+})$, and take the corresponding suitable $\lambda$-chain $\Gamma_{\vtl}(\lambda)$, 
or equivalently, the suitable reduced expression $t_{\lambda} = s_{i_{r}} \cdots s_{i_{2}} s_{i_{1}} (\pi^\vee)^{-1}$ for $\vtl$. 
We define a map from $\CA(w, \Gamma_{\vtl}(\lambda))$ to $\IQLS(\lambda)$ for each $w \in W$. 
Let $A = \{j_{1}, \ldots, j_{s}\} \in \CA(w, \Gamma_{\vtl}(\lambda))$. Then, by the definition of admissible subsets, we obtain the following directed path in $\QBG(W)$: 
\begin{equation}
\bp(A): w = u_{0} \xrightarrow{|\gamma_{j_{1}}^{\sL}|} u_{1} \xrightarrow{|\gamma_{j_{2}}^{\sL}|} \cdots \xrightarrow{|\gamma_{j_{s}}^{\sL}|} u_{s}, 
\end{equation}
where $u_{a} = ws_{|\gamma_{j_{1}}^{\sL}|} \cdots s_{|\gamma_{j_{a}}^{\sL}|}$ for $a = 0, \ldots, s$. 
We set $d_{i} := \deg((\beta_{i}^{\sL})^{\vee})/\pair{\lambda}{\overline{(\beta_{i}^{\sL})^{\vee}}}$ for $i = 1, \ldots, r$. 

Since $\beta_{1}^{\sL} < \cdots < \beta_{r}^{\sL}$, we see from definition of the order $<$ that there exists $m_{1} < \cdots < m_{t}$ such that 
\begin{align}
0 &= d_{j_{1}} = \cdots = d_{j_{m_{1}}} \\
&< d_{j_{m_{1}+1}} = \cdots = d_{j_{m_{2}}} \\
&< \cdots \\
&< d_{j_{m_{t}+1}} = \cdots = d_{j_{s}} = 1. 
\end{align}
If $d_{j_{1}} > 0$, then we set $m_{1} := 0$. 
Also, if $d_{j_{s}} < 1$, then we set $m_{t} := s$. 

For each $a = 1, \ldots, t$, we consider the directed path in $\QBG(W)$: 
\begin{equation}
u_{m_{a}} \xrightarrow{|\gamma_{j_{m_{a}+1}}^{\sL}|} u_{m_{a}+1} \xrightarrow{|\gamma_{j_{m_{a}+2}}^{\sL}|} \cdots \xrightarrow{|\gamma_{j_{m_{a+1}}}^{\sL}|} u_{m_{a+1}}. 
\end{equation}
By the definition of $<$ on $\Daf{\lambda}$ and the assumption that $\beta_{1}^{\sL} < \cdots < \beta_{r}^{\sL}$, 
we have $\gamma_{m_{a}+1}^{\sL} \succ \gamma_{m_{a}+2}^{\sL} \succ \cdots \succ \gamma_{m_{a+1}}^{\sL}$. 
Therefore, we have the following cases: 
\begin{enu}
\item $\gamma_{p}^{\sL} \in \Dp{\lambda}$ for all $m_{a}+1 \le p \le m_{a+1}$ (in this case, we set $n_{a} := m_{a}$); 
\item there exists $n_{a} \in \{m_{a}+1, \ldots, m_{a+1}-1\}$ such that $\gamma_{p}^{\sL} \in -\Dn{\lambda}$ for all $m_{a}+1 \le p \le n_{a}$, 
and $\gamma_{p}^{\sL} \in \Dp{\lambda}$ for all $n_{a} < p \le m_{a+1}$; 
\item $\gamma_{p}^{\sL} \in -\Dn{\lambda}$ for all $m_{a}+1 \le p \le m_{a+1}$ (in this case, we set $n_{a} := m_{a+1}$). 
\end{enu}

We wet 
\begin{equation}
\begin{split}
\Xi(A) &:= (u_{m_{t}}, u_{m_{t-1}}, \ldots, u_{m_{1}}; u_{n_{t-1}}, u_{n_{t-2}}, \ldots, u_{n_{1}}; \\ 
& \hspace{10mm} 0, 1-d_{j_{m_{t}}}, 1-d_{j_{m_{t-1}}}, \ldots, 1-d_{j_{m_{2}}}, 1). 
\end{split}
\end{equation}
The following lemma shows that $\Xi$ defines a map $\CA(w, \Gamma_{\vtl}(\lambda)) \rightarrow \IQLS(\lambda)$. 

\begin{lemma}
For $A \in \CA(w, \Gamma_{\vtl}(\lambda))$, we have $\Xi(A) \in \IQLS(\lambda)$. 
\end{lemma}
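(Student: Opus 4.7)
The plan is to unwind the structure of $\Xi(A) = (\underline{x}; \underline{y}; \underline{\sigma})$ in terms of $\bp(A)$, then verify the three numbered conditions of Definition~\ref{def:iQLS} together with the consecutive-distinctness requirements. The key observation is that, because the reduced expression is suitable for $\vtl$, the coroots $(\beta^{\sL}_{j_1})^{\vee} < \cdots < (\beta^{\sL}_{j_s})^{\vee}$ are $<$-increasing, and $<$ is the lexicographic order on pairs $(d, \overline{\beta})$ with $\prec^{\ast}$ on the second coordinate. Hence each block $[m_a + 1, m_{a+1}]$ collects indices sharing a common value $d^{(a)} := d_{j_{m_{a+1}}}$, and inside the block the roots $\gamma^{\sL}_{j_p} = \overline{\beta^{\sL}_{j_p}}$ are $\prec^{\ast}$-increasing; by the explicit description of $\prec$ in \eqref{eq:def_RO_2}, this forces the $-\Dn{\lambda}$-valued labels to appear first and the $\Dp{\lambda}$-valued ones afterwards, with $n_a$ marking the transition, which is exactly the trichotomy used in the construction.

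For condition~(1), the inequality chain $0 = \sigma_0 < \sigma_1 < \cdots < \sigma_t = 1$ amounts to $0 \le d_{j_{m_t}} < \cdots < d_{j_{m_2}} \le 1$; successive strict inequalities follow from the strict increase of $d$-values across blocks, while the strict endpoint inequalities $\sigma_1 > 0$ and $\sigma_{t-1} < 1$ come from the conventions defining $m_1$ and $m_t$ (handling both the cases where the $d = 0$ and $d = 1$ blocks do or do not appear in the partition).

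For conditions~(2) and~(3), I would reindex by $a := t - i$, rewriting them as
\[
u_{m_a} \xRightarrow{(\lambda, -)}_{\raisebox{1ex}{\scriptsize{$1 - d^{(a)}$}}} u_{n_a} \xRightarrow{(\lambda, +)}_{\raisebox{1ex}{\scriptsize{$1 - d^{(a)}$}}} u_{m_{a+1}},
\]
realized by the two sub-paths of $\bp(A)$ inside the $a$-th block. The sign condition on the labels is immediate from the trichotomy. For the $\vtl$-decreasing requirement, I would transport the $\prec^{\ast}$-increasing order on the $\gamma^{\sL}_{j_p}$'s inside each sub-half into a $\vtl$-decreasing order on $|\gamma^{\sL}_{j_p}|$, using that by \eqref{eq:def_RO_2} the order $\prec$ agrees with $\vtl$ on $\Dp{\lambda}$ and, under the sign flip, on $-\Dn{\lambda}$. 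Integrality follows from
\[
(1 - d^{(a)}) \pair{\lambda}{(\gamma^{\sL}_{j_p})^{\vee}} = \pair{\lambda}{(\gamma^{\sL}_{j_p})^{\vee}} - \deg((\beta^{\sL}_{j_p})^{\vee}) \in \BZ,
\]
since $d^{(a)} \pair{\lambda}{(\gamma^{\sL}_{j_p})^{\vee}} = \deg((\beta^{\sL}_{j_p})^{\vee})$ and a sign flip does not affect integrality.

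The remaining and subtlest step is the consecutive-distinctness of the $x_i$'s and $y_i$'s, which I expect to be the main obstacle. For each sub-half within a block, I would apply the shellability property (Theorem~\ref{thm:shellability}) to the reversed reflection order $\vtl^{\ast}$ to deduce that the sub-half, being a nonempty $\vtl^{\ast}$-label-increasing path in $\QBG(W)$, cannot be a loop; this gives $u_{m_a} \ne u_{n_a}$ and $u_{n_a} \ne u_{m_{a+1}}$ whenever the relevant sub-halves are nonempty. However, this does not directly yield $u_{m_a} \ne u_{m_{a+1}}$ (for $x$-distinctness) nor $u_{n_{b-1}} \ne u_{n_b}$ (for $y$-distinctness), since these sub-paths cross a block boundary and thereby mix $\Dp{\lambda}$- and $\Dn{\lambda}$-labels, so finite-level shellability cannot be applied directly. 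I would lift the argument to the affine level, using that the corresponding affine coroots form a $<$-increasing subsequence of the suitable $\lambda$-chain, and invoke an affine analog of the shellability property to exclude loops.
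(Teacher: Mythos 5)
The parts you carry out match the paper's own proof, which is in fact much shorter than your write-up: the paper takes the block decomposition, the trichotomy at $n_a$, and the label-sign/label-ordering statements as already established in the construction of $\Xi$, and its proof of the lemma consists solely of the integrality computation $(1-d_{j_{m_a}})\pair{\lambda}{|\gamma^{\sL}_{b}|^{\vee}} = \pm\bigl(\pair{\lambda}{\overline{(\beta^{\sL}_{b})^{\vee}}}-\deg((\beta^{\sL}_{b})^{\vee})\bigr) \in \BZ$, which is exactly your computation. So on everything the paper actually proves, your argument is the same.

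The genuine problem is your last step. First, the affine lift you invoke is not needed for the $x$-distinctness: if $u_{m_a} = u_{m_{a+1}}$ in a mixed block, concatenate the two halves in the opposite order, i.e.\ go from $u_{n_a}$ through $u_{m_{a+1}} = u_{m_a}$ back to $u_{n_a}$; since under $\vtl^{\ast}$ every element of $\Dp{\lambda}$ precedes every element of $\Dn{\lambda}$ and each half is $\vtl^{\ast}$-increasing, this is a nonempty label-increasing loop, contradicting Theorem~\ref{thm:shellability} (the same rotation handles the case where a half is empty). Second, and more seriously, no argument -- affine or otherwise -- can prove the $y$-distinctness, because it fails in the image of $\Xi$: in type $A_2$ take $\lambda = -2\vpi_1 + 4\vpi_2$, $\vtl\colon \alpha_1 \vtl \alpha_1+\alpha_2 \vtl \alpha_2$, $w = s_1$, and let $A$ consist of the two indices of $\Gamma_{\vtl}(\lambda)$ whose affine coroots are $-\alpha_1^{\vee}+\wti{\delta}$ (with $d = 1/2$) and $\alpha_2^{\vee}+3\wti{\delta}$ (with $d = 3/4$); then $\bp(A)\colon s_1 \xrightarrow{\alpha_1} e \xrightarrow{\alpha_2} s_2$ is a directed path in $\QBG(W)$, so $A \in \CA(s_1,\Gamma_{\vtl}(\lambda))$, the first block is of type (3) and the second of type (1), and the construction gives $\Xi(A) = (s_2, e, s_1;\, e, e;\, 0, 1/4, 1/2, 1)$, with $y_1 = y_2 = e$. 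So the condition $y_i \neq y_{i+1}$ of Definition~\ref{def:iQLS} is not preserved by $\Xi$; you were right to single out distinctness as the delicate point, but your plan of ``invoking an affine analog of shellability'' cannot close it, since the $y$-statement is false as written (the paper's proof is silent on both distinctness requirements and verifies only integrality, so this is a mismatch between the construction and the stated definition rather than something a better proof could repair).
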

\begin{proof}
Let $A = \{j_{1}, \ldots, j_{t}\}$. 
It suffices to show that $(1-d_{j_{m_{a}}}) \pair{\lambda}{|\gamma_{b}^{\sL}|^{\vee}} \in \BZ$ for all $2 \le a \le t$ and $m_{a-1}+1 \le b \le m_{a}$. 
This follows from the following computation: 
\begin{align}
(1-d_{j_{m_{a}}}) \pair{\lambda}{|\gamma_{b}^{\sL}|^{\vee}} &= (1-d_{b})\pair{\lambda}{|\gamma_{b}^{\sL}|^{\vee}} \\
&= \frac{\pair{\lambda}{\overline{(\beta_{b}^{\sL})^{\vee}}}-\deg((\beta_{b}^{\sL})^{\vee})}{\pair{\lambda}{\overline{(\beta_{b}^{\sL})^{\vee}}}} \cdot \pair{\lambda}{|\overline{(\beta_{b}^{\sL})^{\vee}}|} \\
&= \pm (\pair{\lambda}{\overline{(\beta_{b}^{\sL})^{\vee}}}-\deg((\beta_{b}^{\sL})^{\vee})) \in \BZ. 
\end{align}
Thus we conclude that $\Xi(A) \in \IQLS(\lambda)$. This proves the lemma. 
\end{proof}

Finally, we define the forgetful map, which is a generalization of the map $\Pi^{*}$ in \cite[Section~6.1]{LNSSS2} and the map $\Xi$ in \cite[Section~3.3]{NNS}. 


\begin{definition} \label{def:forgetful}
The \textit{forgetful map} $\widetilde{\Xi}: \CA(w, \Gamma_{\vtl}(\lambda)) \rightarrow \IQLS(\lambda) \times W$ 
is defined by $\widetilde{\Xi}(A) := (\Xi(A), \ed(A))$ for $A \in \CA(w, \Gamma_{\vtl}(\lambda))$. 
\end{definition}

\begin{example} \label{ex:forgetful}
Assume that $\Fg$ is of type $A_{2}$. Let $\lambda = -\vpi_{1} + 2\vpi_{2}$, 
and take ${\vtl} \in \RO(\lambda, \Delta^{+})$ given by: $\alpha_{1} \vtl \alpha_{1}+\alpha_{2} \vtl \alpha_{2}$. 
Then we have 
\begin{equation}
(\beta_{1}^{\sL})^{\vee} = \alpha_{2}^{\vee}, \quad (\beta_{2}^{\sL})^{\vee} = \alpha_{1}^{\vee}+\alpha_{2}^{\vee}, \quad (\beta_{3}^{\sL})^{\vee} = \alpha_{2}^{\vee}+\wti{\delta}, \quad (\beta_{4}^{\sL})^{\vee} = -\alpha_{1}^{\vee} + \wti{\delta}, 
\end{equation}
with $\Daf{-\vpi_{1}+2\vpi_{2}} = \{(\beta_{1}^{\sL})^{\vee}, (\beta_{2}^{\sL})^{\vee}, (\beta_{3}^{\sL})^{\vee}, (\beta_{4}^{\sL})^{\vee} \}$, and 
\begin{equation}
\Gamma_{\vtl}(\lambda) = (\alpha_{2}, \alpha_{1}+\alpha_{2}, \alpha_{2}, -\alpha_{1}). 
\end{equation}
Let us compute $\Xi(A)$ for $A = \{2, 3, 4\} \in \CA(s_{1}, \Gamma_{\vtl}(\lambda))$. 
First, we see that 
\begin{equation}
\bp(A): s_{1} \xrightarrow{\alpha_{1}+\alpha_{2}} s_{2}s_{1} \xrightarrow{\alpha_{2}} s_{1}s_{2}s_{1} \xrightarrow{\alpha_{1}} s_{1}s_{2}; 
\end{equation}
note that $d_{2} = 0$, $d_{3} = 1/2$, and $d_{4} = 1$. 
Therefore, in the definition of $\Xi(A)$, we have $m_{1} = 1$ and $m_{t} = 2$ (with $t = 2$). 
Let us concentrate on the directed path $s_{2}s_{1} (= u_{1}) \xrightarrow{\alpha_{2}} s_{1}s_{2}s_{1} (= u_{2})$ in $\bp(A)$; 
note that $\alpha_{2} \in \Dp{\lambda}$. 
Thus we deduce that $n_{1} = m_{1} = 1$, and hence that
\begin{equation}
\Xi(A) = (u_{2}, u_{1}; u_{1}; 0, 1-d_{2}, 1) = (s_{1}s_{2}s_{1}, s_{2}s_{1}; s_{2}s_{1}; 0, 1/2, 1). 
\end{equation}

In Table~\ref{tab:forgetful}, we give the list of $\tXi(A)$ for all $A \in \CA(w, \Gamma_{\vtl}(\lambda))$. 
\begin{table}[ht]
\centering
\caption{The list of $\tXi(A)$ for all $A \in \CA(s_{1}, \Gamma_{\vtl}(\lambda))$}
\label{tab:forgetful}
\begin{tabular}{|c||c|} \hline
$A$ & $\tXi(A) = (\Xi(A), \ed(A))$ \\ \hline 
$\emptyset$ & $((s_{1}; ; 0, 1), s_{1})$  \\ 
$\{1\}$ & $((s_{1}s_{2}; ; 0,1), s_{1}s_{2})$ \\ 
$\{2\}$ & $((s_{2}s_{1}; ; 0, 1), s_{2}s_{1})$ \\ 
$\{3\}$ & $((s_{1}s_{2}, s_{1}; s_{1}; 0, 1/2, 1), s_{1}s_{2})$ \\
$\{4\}$ & $((s_{1}; ; 0, 1), e)$ \\ 
$\{1, 3\}$ & $((s_{1}, s_{1}s_{2}; s_{1}s_{2}; 0, 1/2, 1), s_{1})$ \\ 
$\{1, 4\}$ & $((s_{1}s_{2}; ; 0, 1), s_{1}s_{2}s_{1})$ \\ 
$\{2, 3\}$ & $((s_{1}s_{2}s_{1}, s_{2}s_{1}; s_{2}s_{1}; 0, 1/2, 1), s_{1}s_{2}s_{1})$ \\ 
$\{2, 4\}$ & $((s_{2}s_{1}; ; 0, 1), s_{2})$ \\ 
$\{3, 4\}$ & $((s_{1}s_{2}, s_{1}; s_{1}; 0, 1/2, 1), s_{1}s_{2}s_{1})$ \\ 
$\{1, 3, 4\}$ & $((s_{1}, s_{1}s_{2}; s_{1}s_{2}; 0, 1/2, 1), e)$ \\ 
$\{2, 3, 4\}$ & $((s_{1}s_{2}s_{1}, s_{2}s_{1}; s_{2}s_{1}; 0, 1/2, 1), s_{1}s_{2})$ \\ \hline
\end{tabular}
\end{table}
\end{example}

\subsection{The image of the forgetful map}

The image $\im(\tXi)$ of the forgetful map $\tXi$ is described by the following.


\begin{theorem} \label{thm:im_forgetful}
We have the following equality: 
\begin{equation}
\im(\tXi) = \left\{ (\eta, u) \in \IQLS(\lambda) \times W \ \middle| \begin{array}{ll} w \xRightarrow{(\lambda, +)} \kappa(\eta) \\ \iota(\eta) \xRightarrow{(\lambda, -)} u \end{array} \right\}. \label{eq:image_forgetful} 
\end{equation}
\end{theorem}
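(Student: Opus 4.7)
The plan is to establish the two set-theoretic inclusions separately, translating between the combinatorics of $\Daf{\lambda}$ (as organized by the order $<$) on one side and the level structure of $\eta \in \IQLS(\lambda)$ on the other.

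For the forward inclusion $\im(\tXi) \subseteq$ RHS, I would fix $A = \{j_1 < \cdots < j_s\} \in \CA(w, \Gamma_{\vtl}(\lambda))$ and isolate from $\bp(A)$ the two boundary segments appearing in the construction of $\Xi$: the initial segment consisting of the positions $k$ with $d_{j_k} = 0$, yielding the subpath $w = u_0 \to \cdots \to u_{m_1} = \kappa(\Xi(A))$, and the final segment consisting of the positions with $d_{j_k} = 1$, yielding $\iota(\Xi(A)) = u_{m_t} \to \cdots \to u_s = \ed(A)$. Lemma~\ref{lem:map_for_forgetful_map2} combined with the description of $\Daf{\lambda}$ in \eqref{eq:Inv} shows that the labels of the first subpath lie in $\Dp{\lambda}$ and those of the second in $\Dn{\lambda}$, while the definition of the order $<$ on $\Daf{\lambda}$, together with the relation between $\prec^{\ast}$ and the $\vtr$-orderings on $\Dp{\lambda}$ and $\Dn{\lambda}$ obtained by unwinding \eqref{eq:def_RO_2}, forces these labels to appear in $\vtr$-decreasing order. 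Since $\sigma = 1$ trivializes the integrality condition, both relations $w \xRightarrow{(\lambda, +)} \kappa(\Xi(A))$ and $\iota(\Xi(A)) \xRightarrow{(\lambda, -)} \ed(A)$ follow.

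For the reverse inclusion, given $(\eta, u)$ with $\eta = (x_1, \ldots, x_s; y_1, \ldots, y_{s-1}; \sigma_0, \ldots, \sigma_s)$ lying in the right-hand side of \eqref{eq:image_forgetful}, I would splice the given $\xRightarrow{}$-paths end to end to obtain one long directed path in $\QBG(W)$ of the form
\begin{equation*}
w \xRightarrow{(\lambda,+)} x_s \xRightarrow{(\lambda,-)}_{\sigma_{s-1}} y_{s-1} \xRightarrow{(\lambda,+)}_{\sigma_{s-1}} x_{s-1} \cdots y_1 \xRightarrow{(\lambda,+)}_{\sigma_1} x_1 \xRightarrow{(\lambda,-)} u,
\end{equation*}
and then assign to each labeling root $\gamma$ the unique affine coroot $\beta^{\vee} \in \Daf{\lambda}$ with $\Phi(\beta^{\vee}) = (1-\sigma, \overline{\beta})$, where $\sigma$ is the level of the edge (with $\sigma = 1$ for the two outer boundary paths) and $\overline{\beta} = \gamma$ or $-\gamma$ according to whether $\gamma \in \Dp{\lambda}$ or $\gamma \in \Dn{\lambda}$. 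The integrality hypotheses in Definition~\ref{def:integrality}, combined with \eqref{eq:Inv}, ensure that each such $\beta^{\vee}$ really lies in $\Daf{\lambda}$. The resulting collection of affine coroots is then in $<$-increasing order: between distinct $\sigma$-levels this is immediate from the inequalities $0 < 1-\sigma_{s-1} < \cdots < 1-\sigma_1 < 1$, while within a single level I would use Proposition~\ref{prop:bij_RO} to translate the $\vtr$-decreasing orderings that Definition~\ref{def:integrality} imposes on the $\xRightarrow{(\lambda,-)}_{\sigma_i}$ and $\xRightarrow{(\lambda,+)}_{\sigma_i}$ segments into the $\prec^{\ast}$-increasing order on $(-\Dn{\lambda}) \sqcup \Dp{\lambda}$ dictated by \eqref{eq:def_RO_2}. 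Under the bijection $\Phi$ this exhibits a subset $A \subset \{1, \ldots, r\}$ whose associated $\bp(A)$ is exactly the concatenated path above; hence $A$ is $w$-admissible, and a direct comparison of the $d$-group structure of $A$ with the level structure of $\eta$ gives $\Xi(A) = \eta$ and $\ed(A) = u$, so that $\tXi(A) = (\eta, u)$.

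The technical heart of the argument is the within-level ordering check in the reverse inclusion: one must simultaneously handle both flavors of $\xRightarrow{}$-paths and verify that, after passage to affine coroots, the $\xRightarrow{(\lambda,-)}_{\sigma_i}$ segment (landing in $-\Dn{\lambda}$) precedes and the $\xRightarrow{(\lambda,+)}_{\sigma_i}$ segment (landing in $\Dp{\lambda}$) follows, both in $\prec^{\ast}$-increasing order. This ultimately rests on the careful use of \eqref{eq:def_RO_2} and the identity $-\alpha \prec^{\ast} -\beta \Leftrightarrow \alpha \vtr \beta$ for $\alpha, \beta \in \Dn{\lambda}$. A secondary bookkeeping point is that the non-degeneracy condition $x_i \neq x_{i+1}$ of Definition~\ref{def:iQLS} is an assumption on $\eta$ and so transfers to $\Xi(A)$ automatically, while in the opposite direction it is guaranteed by the convention that sets $m_1 = 0$ or $m_t = s$ precisely when the corresponding boundary group is empty.
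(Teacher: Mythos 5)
Your overall route is the same as the paper's: for the inclusion $\im(\tXi)\subset\mathrm{RHS}$ you read off the degree-$0$ and degree-$1$ groups of $\bp(A)$ and use \eqref{eq:Inv} together with the compatibility of $\prec^{*}$ with $\vtl$ to get $w \xRightarrow{(\lambda,+)} \kappa(\Xi(A))$ and $\iota(\Xi(A)) \xRightarrow{(\lambda,-)} \ed(A)$; for the reverse inclusion you splice the paths furnished by the hypotheses and by Definition~\ref{def:iQLS} into one long path and lift each label to an element of $\Daf{\lambda}$ via $\Phi$, checking that the lifts are $<$-increasing. This is exactly the paper's argument. There is, however, one concrete error in your lifting recipe: you assign to an edge at level $\sigma$ the coroot with $\Phi(\beta^{\vee})=(1-\sigma,\overline{\beta})$ and declare ``$\sigma=1$ for the two outer boundary paths.'' For the initial segment $w \xRightarrow{(\lambda,+)} \kappa(\eta)$ this gives first coordinate $0$, which is correct; but for the final segment $\iota(\eta) \xRightarrow{(\lambda,-)} u$ it also gives first coordinate $0$, and no element of $\Daf{\lambda}$ has degree $0$ with classical part in $-\Dn{\lambda}$: by \eqref{eq:Inv}, if $\chi(\alpha^{\vee})=1$ the admissible degrees are $1\le k\le \pair{\lambda}{\alpha^{\vee}}$. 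So the ``unique affine coroot'' you invoke does not exist for that segment; and even setting membership aside, a first coordinate $0$ there would collide with the initial segment and destroy the strict increase across levels that you need (your own chain $0<1-\sigma_{s-1}<\cdots<1-\sigma_{1}<1$ tacitly places the final segment at $1$, contradicting your parenthetical).

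The correct assignment, which is what the paper makes, is $d_{i}:=1-\sigma_{i}$ indexed by the level of $\eta$, so the final segment is lifted at $d_{0}=1-\sigma_{0}=1$ (the relevant parameter is $\sigma_{0}=0$, not the notational ``$\sigma=1$'' coming from the unsubscripted arrow, whose integrality condition is vacuous in both cases): for a label $\gamma\in\Dn{\lambda}$ of that segment one takes $\beta^{\vee} = -\gamma^{\vee} + \pair{\lambda}{-\gamma^{\vee}}\,\wti{\delta}$, which lies in $\Daf{\lambda}$ by \eqref{eq:Inv} and has first coordinate $1$, hence comes last in the order $<$. With this fix your membership check (the three cases $i=s$, $1\le i\le s-1$, $i=0$), your between-level and within-level ordering discussion, and the identification $\Xi(A)=\eta$, $\ed(A)=u$ all go through and coincide with the paper's proof; the forward inclusion as you wrote it is fine as is.
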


\begin{remark}
Let $\lambda \in P^{+}$ be a regular dominant weight. 
Under the identification $\IQLS(\lambda)$ and $\QLS(\lambda)$ explained in Remark~\ref{rem:IQLS=QLS}, we see that the map $\Xi$ is identical to the forgetful map constructed in \cite[Proposition~28]{LNS}. 
In addition, the right-hand side of \eqref{eq:image_forgetful} is identified with $\IQLS(\lambda) = \QLS(\lambda)$ as follows: 
Since $\Dp{\lambda} = \Delta^{+}$, there exists a path $w \xRightarrow{(\lambda, +)} \kappa(\eta)$ for all $\eta \in \IQLS(\lambda)$ by the shellability (Theorem~\ref{thm:shellability}). 
Also, since $\Dn{\lambda} = \emptyset$, all paths of the form $\iota(\eta) \xRightarrow{(\lambda, -)} u$ for $\eta \in \IQLS(\lambda)$ and $u \in W$ are trivial. 
Hence 
\begin{equation}
\text{(RHS of \eqref{eq:image_forgetful})} = \{ (\eta, \iota(\eta)) \mid \eta \in \IQLS(\lambda) \} \simeq \IQLS(\lambda) = \QLS(\lambda). 
\end{equation}
Under this identification, $\tXi$ is identical to $\Xi$ and hence to the forgetful map constructed in \cite[Proposition~28]{LNS}.
\end{remark}

\begin{proof}[Proof of Theorem~\ref{thm:im_forgetful}]
Let $\mathcal{S}$ denote the right-hand side of \eqref{eq:image_forgetful}. 
First, we prove the inclusion $\im(\tXi) \subset \CS$. 
Let $A = \{ j_{1}, \ldots, j_{s} \} \in \CA(w, \Gamma_{\vtl}(\lambda))$. 
It suffices to show that $w \xRightarrow{(\lambda, +)} \kappa(\Xi(A))$ and $\iota(\Xi(A)) \xRightarrow{(\lambda, -)} \ed(A)$. 

Recall that $\kappa(\Xi(A)) = u_{m_{1}}$, and that there exists a directed path 
\begin{equation}
w \xrightarrow{|\gamma_{j_{1}}^{\sL}|} u_{1} \xrightarrow{|\gamma_{j_{2}}^{\sL}|} \cdots \xrightarrow{|\gamma_{j_{m_{1}}}^{\sL}|} u_{m_{1}}
\end{equation}
in $\QBG(W)$; 
note that $d_{j_{1}} = \cdots = d_{j_{m_{1}}} = 0$. 
Let $a = j_{1}, j_{2}, \ldots, j_{m_{1}}$. Then, we see that $\deg((\beta_{a}^{\sL})^{\vee}) = 0$ since 
\begin{equation}
d_{a} = \frac{\deg((\beta_{a}^{\sL})^{\vee})}{\pair{\lambda}{\overline{(\beta_{a}^{\sL})^{\vee}}}} = 0. 
\end{equation}
By \eqref{eq:Inv}, we have $\gamma_{a}^{\sL} = \overline{\beta_{a}^{\sL}} \in \Delta^{+}$. 
This implies that $|\gamma_{a}^{\sL}| = \gamma_{a}^{\sL} \in \Dp{\lambda}$. 
Since $\gamma_{j_{1}}^{\sL} \succ \cdots \succ \gamma_{j_{m_{1}}}^{\sL}$, 
it follows that $\gamma_{j_{1}}^{\sL} \vtr \cdots \vtr \gamma_{j_{m_{1}}}^{\sL}$. 
Therefore, we deduce that $w \xRightarrow{(\lambda, +)} \kappa(\Xi(A))$, as desired. 

Next, we show that $\iota(\Xi(A)) \xRightarrow{(\lambda, -)} \ed(A)$. 
Recall that $\iota(\Xi(A)) = u_{m_{t}}$, and that there exists a directed path 
\begin{equation}
u_{m_{t}} \xrightarrow{|\gamma_{j_{m_{t}}+1}^{\sL}|} u_{m_{t}+1} \xrightarrow{|\gamma_{j_{m_{t}}+2}^{\sL}|} \cdots \xrightarrow{|\gamma_{j_{s}}^{\sL}|} u_{s} = \ed(A) 
\end{equation}
in $\QBG(W)$; 
note that $d_{j_{m_{t}+1}} = \cdots = d_{j_{s}} = 1$. 
Let $a = j_{m_{t}+1}, j_{m_{t}+2}, \ldots, j_{s}$. 
Then, we see that $\deg((\beta_{a}^{\sL})^{\vee}) = \pair{\lambda}{\overline{(\beta_{a}^{\sL})^{\vee}}}$ since 
\begin{equation}
d_{a} = \frac{\deg((\beta_{a}^{\sL})^{\vee})}{\pair{\lambda}{\overline{(\beta_{a}^{\sL})^{\vee}}}} = 1. 
\end{equation}
Here, \eqref{eq:Inv} implies that $\gamma_{a}^{\sL} = \overline{\beta_{a}^{\sL}} \in \Delta^{-}$, 
and hence that $|\gamma_{a}^{\sL}| = -\gamma_{a}^{\sL} \in \Dn{\lambda}$. 
Since $\gamma_{j_{m_{t}}+1}^{\sL} \succ \cdots \succ \gamma_{j_{s}}^{\sL}$, 
it follows that $|\gamma_{j_{m_{t}}+1}^{\sL}| \vtr \cdots \vtr |\gamma_{j_{s}}^{\sL}|$. 
Therefore, we deduce that $\iota(\Xi(A)) \xRightarrow{(\lambda, -)} \ed(A)$, as desired. 
This proves the inclusion $\im(\tXi) \subset \CS$. 

We will prove the opposite inclusion $\CS \subset \im(\tXi)$. 
Let $(\eta, u) \in \CS$. Then we have $w \xRightarrow{(\lambda, +)} \kappa(\eta)$ and $\iota(\eta) \xRightarrow{(\lambda, -)} u$. 
We will construct $A \in \CA(w, \Gamma_{\vtl}(\lambda))$ such that $\Xi(A) = \eta$ and $\ed(A) = u$. 
We write $\eta$ as $(x_{1}, \ldots, x_{s}; y_{1}, \ldots, y_{s-1}; \sigma_{0}, \ldots, \sigma_{s})$. 
Since $w \xRightarrow{(\lambda, +)} \kappa(\eta) = x_{s}$, 
there exists a directed path 
\begin{equation} \label{eq:inv_forgetful1}
u \xrightarrow{|\gamma_{j_{s, 1}}^{\sL}|} \cdots \xrightarrow{|\gamma_{j_{s, m_{s}}}^{\sL}|} x_{s}
\end{equation}
in $\QBG(W)$ such that $\gamma_{j_{s,1}}^{\sL} \succ \cdots \succ \gamma_{j_{s,m_{s}}}^{\sL}$, and $\gamma_{j_{s, p}}^{\sL} \in \Dp{\lambda}$ for all $p = 1, \ldots, m_{s}$. 
Also, by the definition of interpolated QLS paths, for each $i = 1, \ldots, s-1$, we have the following directed paths 
\begin{equation} \label{eq:inv_forgetful2}
x_{i+1} \xrightarrow{|\gamma_{j_{i, 1}}^{\sL}|} \cdots \xrightarrow{|\gamma_{j_{i, n_{i}}}^{\sL}|} y_{i}, 
\end{equation}
\begin{equation} \label{eq:inv_forgetful3}
y_{i} \xrightarrow{|\gamma_{j_{i, n_{i}+1}}^{\sL}|} \cdots \xrightarrow{|\gamma_{j_{i, m_{i}}}^{\sL}|} x_{i}
\end{equation}
in $\QBG(W)$ such that $\gamma_{j_{i,1}}^{\sL} \succ \cdots \succ \gamma_{j_{i, m_{i}}}^{\sL}$, and such that $\gamma_{j_{i, p}}^{\sL} \in -\Dn{\lambda}$ for $p = 1, \ldots, n_{i}$, 
$\gamma_{j_{i, p}}^{\sL} \in \Dp{\lambda}$ for $p = n_{i}+1, \ldots, m_{i}$. 
In addition, since $\iota(\eta) = x_{1} \xRightarrow{(\lambda, -)} u$, there exists a directed path 
\begin{equation} \label{eq:inv_forgetful4}
x_{1} \xrightarrow{|\gamma_{j_{0, 1}}^{\sL}|} \cdots \xrightarrow{|\gamma_{j_{0, m_{0}}}^{\sL}|} u 
\end{equation}
in $\QBG(W)$ such that $\gamma_{j_{0,1}}^{\sL} \succ \cdots \succ \gamma_{j_{0,m_{0}}}^{\sL}$, and $\gamma_{j_{0, p}}^{\sL} \in -\Dn{\lambda}$ for $p = 1, \ldots, m_{0}$. 

For $i = 0, \ldots, s$, we set $d_{i} := 1 - \sigma_{i}$. 
Also, for $i = 0, \ldots, s$ and $p = 1, \ldots, m_{i}$, we set 
$\beta_{j_{i, p}}^{\vee} := (\gamma_{j_{i, p}}^{\sL})^{\vee} + d_{i} \pair{\lambda}{(\gamma_{j_{i, p}}^{\sL})^{\vee}} \widetilde{\delta}$. 
We claim that $\beta_{j_{i, p}}^{\vee} \in \Daf{\lambda}$. 

Assume that $i = s$. Then we have $d_{s} = 1 - \sigma_{s} = 0$. 
Since $\gamma_{j_{s, p}}^{\sL} \in \Delta^{+}$, \eqref{eq:Inv} implies that $\beta_{j_{i, p}}^{\vee} \in \Daf{\lambda}$. 
Next, assume that $1 \le i \le s-1$. In this case, we have $0 < \sigma_{i} < 1$. 
Since $0 < d_{i} \pair{\lambda}{(\gamma_{j_{i, p}}^{\sL})^{\vee}} < \pair{\lambda}{(\gamma_{j_{i, p}}^{\sL})^{\vee}}$ (note that $\pair{\lambda}{(\gamma_{j_{i, p}}^{\sL})^{\vee}} > 0$), 
and since $0 \le \chi((\gamma_{j_{i, p}}^{\sL})^{\vee}) \le 1$, 
\eqref{eq:Inv} implies that $\beta_{j_{i, p}}^{\vee} \in \Daf{\lambda}$. 
Finally, assume that $i = 0$. Then we have $d_{0} = 1 - \sigma_{0} = 1$. 
Since $\gamma_{j_{0, p}}^{\sL} \in \Delta^{-}$ (or equivalently, $\chi((\gamma_{j_{0, p}}^{\sL})^{\vee}) = 1$), 
\eqref{eq:Inv} implies that $\beta_{j_{0, p}}^{\vee} \in \Daf{\lambda}$. 
Therefore, we conclude that $\beta_{j_{i, p}}^{\vee} \in \Daf{\lambda}$ for all $i = 0, \ldots, s$ and $p = 1, \ldots, m_{i}$, as desired. 

Now, recall that $\gamma_{j_{i, 1}}^{\sL} \succ \cdots \succ \gamma_{j_{i, m_{i}}}^{\sL}$ for all $i = 0, \ldots, s$, and that 
$0 = d_{s} < \cdots < d_{1} < d_{0} = 1$. 
Also, we have 
\begin{equation}
\Phi(\beta_{j_{i, p}}^{\vee}) = \left( \frac{\deg(\beta_{j_{i, p}}^{\vee})}{\pair{\lambda}{\overline{\beta_{j_{i, p}}^{\vee}}}}, \overline{\beta_{j_{i, p}}} \right) = (d_{i}, \gamma_{j_{i, p}}^{\sL}) 
\end{equation}
for $i = 0, \ldots, s$ and $p = 1, \ldots, m_{i}$. 
Therefore, by the definition of $<$ on $\Daf{\lambda}$, we deduce that 
$\beta_{j_{s, 1}}^{\vee} < \cdots < \beta_{j_{s, m_{s}}}^{\vee} < \beta_{j_{s-1, 1}}^{\vee} < \cdots < \beta_{j_{s-1, m_{s-1}}}^{\vee} 
< \cdots < \beta_{j_{0, 1}}^{\vee} < \cdots < \beta_{j_{0, m_{0}}}^{\vee}$. 
Thus we obtain 
\begin{equation}
A = \{ j'_{s, 1}, \ldots, j'_{s, m_{s}}, j'_{s-1, 1}, \ldots, j'_{s-1, m_{s-1}}, \ldots, j'_{0, 1}, \ldots, j'_{0, m_{0}} \}
\end{equation}
such that $(\beta_{j'_{i, p}}^{\sL})^{\vee} = \beta_{j_{i, p}}^{\vee}$ for all $i = 0, \ldots, s$ and $p = 1, \ldots, m_{i}$. 
It is obvious that $A \in \CA(w, \Gamma_{\vtl}(\lambda))$, and that $\Xi(A) = \eta$, $\ed(A) = u$. 
This proves that $(\eta, u) \in \im(\tXi)$, and hence that $\im(\tXi) = \CS$, as desired. This completes the proof of Theorem~\ref{thm:im_forgetful}. 
\end{proof}

\begin{example}
Assume that $\Fg$ is of type $A_{2}$. For $\lambda = -\vpi_{1}+2\vpi_{2}$ and $w = s_{1}$, we computed $\im(\tXi)$ in Example~\ref{ex:forgetful}. 
Also, recall the set $\IQLS(\lambda)$ from Example~\ref{ex:IQLS_n=2}. 
From Table~\ref{tab:image} below, we see the following for all $\eta \in \IQLS(\lambda)$: 
\begin{itemize}
\item whether $s_{1} \xRightarrow{(\lambda, +)} \kappa(\eta)$ or not, and 
\item which $u \in W$ satisfies $\iota(\eta) \xRightarrow{(\lambda, -)} u$ 
when $s_{1} \xRightarrow{(\lambda, +)} \kappa(\eta)$. 
\end{itemize}
\begin{table}[ht]
\centering
\caption{Conditions on the RHS of equation~\eqref{eq:image_forgetful}}
\label{tab:image}
\begin{tabular}{|c||c|c|} \hline 
$\eta \in \IQLS(\lambda)$ & $s_{1} \xRightarrow{(\lambda, +)} \kappa(\eta)$? & $u \in W$ such that $\iota(\eta) \xRightarrow{(\lambda, -)} u$ \\ \hline 
$(e; ; 0, 1)$ & $\times$ & --- \\ 
$(s_{1}; ; 0, 1)$ & $\bigcirc$ & $s_{1}$, $e$ \\ 
$(s_{2}; ; 0, 1)$ & $\times$ & --- \\ 
$(s_{1}s_{2}; ; 0, 1)$ & $\bigcirc$ & $s_{1}s_{2}$, $s_{1}s_{2}s_{1}$ \\ 
$(s_{2}s_{1}; ; 0, 1)$ & $\bigcirc$ & $s_{2}s_{1}$, $s_{2}$ \\ 
$(s_{1}s_{2}s_{1}; ; 0, 1)$ & $\times$ & --- \\ 
$(s_{2}, e; e; 0, 1/2, 1)$ & $\times$ & --- \\ 
$(s_{1}s_{2}, s_{1}; s_{1}; 0, 1/2, 1)$ & $\bigcirc$ & $s_{1}s_{2}$, $s_{1}s_{2}s_{1}$ \\ 
$(e, s_{2}; s_{2}; 0, 1/2, 1)$ & $\times$ & --- \\ 
$(s_{1}, s_{1}s_{2}; s_{1}s_{2}; 0, 1/2, 1)$ & $\bigcirc$ & $s_{1}$, $e$ \\ 
$(s_{1}s_{2}s_{1}, s_{2}s_{1}; s_{2}s_{1}; 0, 1/2, 1)$ & $\bigcirc$ & $s_{1}s_{2}s_{1}$, $s_{1}s_{2}$ \\ 
$(s_{2}s_{1}, s_{1}s_{2}s_{1}; s_{1}s_{2}s_{1}; 0, 1/2, 1)$ & $\times$ & --- \\ \hline
\end{tabular}
\end{table}
By comparing Table~\ref{tab:image} and Table~\ref{tab:forgetful}, we can verify that 
\begin{equation}
\im(\tXi) = \left\{ (\eta, u) \in \IQLS(\lambda) \times W \ \middle| \begin{array}{ll} s_{1} \xRightarrow{(\lambda, +)} \kappa(\eta) \\ \iota(\eta) \xRightarrow{(\lambda, -)} u \end{array} \right\}.
\end{equation}
\end{example}


\begin{theorem} \label{thm:inj_forgetful}
The forgetful map $\tXi$ is injective. 
\end{theorem}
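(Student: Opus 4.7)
The plan is to recover $A$ uniquely from $\tXi(A) = (\eta, u)$ by combining the shellability property of $\QBG(W)$ (Theorem~\ref{thm:shellability}) with the injectivity of the map $\Phi: \Daf{\lambda} \to \BQ_{\ge 0} \times ((\Dp{\lambda}) \sqcup (-\Dn{\lambda}))$ used to build the affine reflection order $<$. Writing $\eta = (x_{1}, \ldots, x_{s}; y_{1}, \ldots, y_{s-1}; \sigma_{0}, \ldots, \sigma_{s})$ and setting $d_{i} := 1 - \sigma_{i}$, the construction of $\Xi$ shows that the values $d_{j_k}$ attached to the indices of $A$ (read in increasing $<$-order) are non-decreasing and take exactly the values $0 = d_{s} < d_{s-1} < \cdots < d_{0} = 1$. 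Hence the level of each index of $A$ is prescribed by $\eta$ alone.

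Next I would use the fact that within the level-$d_i$ block, the subpath of $\bp(A)$ splits, thanks to the structure of $\prec \in \RO(\lambda, w(\lambda)\Delta^+)$, into a ``negative'' portion with labels in $\Dn{\lambda}$ followed by a ``positive'' portion with labels in $\Dp{\lambda}$, realizing the arrows $x_{i+1} \xRightarrow{(\lambda, -)} y_i$ and $y_i \xRightarrow{(\lambda, +)} x_i$ respectively; similarly the extreme blocks realize $w \xRightarrow{(\lambda, +)} x_s$ and $x_1 \xRightarrow{(\lambda, -)} u$. The split point in each block is pinned down by $y_i$, so both the endpoints and the sign-type of every sub-segment are determined by $(\eta, u)$.

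Then I would apply Theorem~\ref{thm:shellability} to the reverse reflection order $\vtl^{\ast}$: within every such sub-segment the labels satisfy $\gamma_1 \vtr \cdots \vtr \gamma_r$, that is, they are label-increasing with respect to $\vtl^{\ast}$; since the label-$\vtl^{\ast}$-increasing path in $\QBG(W)$ between two prescribed endpoints is unique, the full sequence of labels $\gamma^{\sL}$ along every sub-segment is forced by $(\eta, u)$. Finally, each label $\gamma^{\sL}$ paired with its level $d_i$ is a point of $\BQ_{\ge 0} \times ((\Dp{\lambda}) \sqcup (-\Dn{\lambda}))$, and since $\Phi$ is injective it lifts to a unique element $(\beta_k^{\sL})^{\vee} \in \Daf{\lambda}$, i.e.\ to a unique index $k \in \{1, \ldots, r\}$. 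The union of these indices over all sub-segments reconstructs $A$, so $\tXi(A) = \tXi(A')$ forces $A = A'$.

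The only delicate point I anticipate is checking that the segmentation of $\bp(A)$ prescribed by $(\eta, u)$ really does match, sub-segment by sub-segment, the arrows that appear in $\eta$ together with the two outer arrows produced by $w$ and $u$; i.e.\ that the ``split index'' $n_a$ in the block at level $d_i$ corresponds precisely to $y_i$. This is essentially bookkeeping, following the decomposition already laid out in the definition of $\Xi$ and the reconstruction step in the proof of Theorem~\ref{thm:im_forgetful}. Once this identification is in place, the remainder is a direct application of shellability and of the injectivity of $\Phi$, and no further obstacle is expected.
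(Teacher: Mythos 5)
Your proposal is correct and follows essentially the same route as the paper: the paper proves injectivity by observing that the reconstruction of $A$ from $(\eta,u)$ carried out in the proof of Theorem~\ref{thm:im_forgetful} is forced, because by the shellability property (Theorem~\ref{thm:shellability}) the directed paths \eqref{eq:inv_forgetful1}--\eqref{eq:inv_forgetful4} are unique, so that this reconstruction defines an inverse map on $\im(\tXi)$. Your explicit use of the injectivity of $\Phi$ to pass from (level, label) pairs back to indices, and your remark that the extreme blocks (at levels $0$ and $1$) may be empty, are just more detailed renderings of the same argument and cause no difficulty.
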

\begin{proof}
In the proof of Theorem~\ref{thm:im_forgetful}, we constructed an admissible subset $A \in \CA(w, \Gamma_{\vtl}(\lambda))$ for each $(\eta, u) \in \CS = \im(\tXi)$. By the shellability property (Theorem~\ref{thm:shellability}), there uniquely exist the directed paths \eqref{eq:inv_forgetful1}, \eqref{eq:inv_forgetful2}, \eqref{eq:inv_forgetful3}, and \eqref{eq:inv_forgetful4} uniquely exist. 
Therefore, we obtain a (well-defined) map given by: $\im(\tXi) \rightarrow \CA(w, \Gamma_{\vtl}(\lambda)); \ (\eta, u) \mapsto A$, 
which is the inverse map of $\tXi$. Hence $\tXi$ is injective. 
This proves the theorem. 
\end{proof}


\subsection{Statistics for admissible subsets in terms of interpolated QLS paths}\label{subsec:forgetful_statistics}
We rewrite the statistics defined for admissible subsets in terms of interpolated QLS paths. 

\begin{proposition}
For $A \in \CA(w, \Gamma_{\vtl}(\lambda))$, we have $n(A) = \nega(\Xi(A)) + \ell(\iota(\Xi(A)) \Rightarrow \ed(A))$. 
\end{proposition}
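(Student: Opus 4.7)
The plan is to decompose $A = \{j_1 < \cdots < j_s\}$ according to the groups used in the definition of $\Xi(A)$ and then match the count $n(A)$ of negative-root labels directly against the lengths of the shortest-path segments appearing in $\nega(\Xi(A))$ and $\ell(\iota(\Xi(A)) \Rightarrow \ed(A))$. Using the notation $0 = d_{j_1} = \cdots = d_{j_{m_1}} < d_{j_{m_1+1}} = \cdots = d_{j_{m_2}} < \cdots < d_{j_{m_t+1}} = \cdots = d_{j_s} = 1$ from the definition of $\Xi(A)$, together with Lemma~\ref{lem:map_for_forgetful_map2}, I would first observe that $\gamma^{\sL}_{j_p} \in -\Delta^+$ if and only if $\gamma^{\sL}_{j_p} \in -\Dn{\lambda}$, so the negative-root labels of $\bp(A)$ are precisely the labels lying in the ``negative portion'' $[m_a+1, n_a]$ of each group $a$. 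Since the $d=0$ group contains only labels in $\Dp{\lambda}$ (contributing $0$) and the $d=1$ group contains only labels in $-\Dn{\lambda}$ (contributing $s - m_t$), this gives
\[
n(A) = \sum_{a=1}^{t-1} (n_a - m_a) + (s - m_t).
\]

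Next, I would read off $\iota(\Xi(A)) = u_{m_t}$ and, for $k = 1, \ldots, t-1$, the pair $(x'_{k+1}, y'_k) = (u_{m_{t-k}}, u_{n_{t-k}})$ from
\[
\Xi(A) = (u_{m_t}, \ldots, u_{m_1};\, u_{n_{t-1}}, \ldots, u_{n_1};\, 0, 1-d_{j_{m_t}}, \ldots, 1-d_{j_{m_2}}, 1),
\]
and set $a = t - k$ to get $\nega(\Xi(A)) = \sum_{a=1}^{t-1} \ell(u_{m_a} \Rightarrow u_{n_a})$. For each such $a$, the subpath of $\bp(A)$ from $u_{m_a}$ to $u_{n_a}$ is a directed path in $\QBG(W)$ of length $n_a - m_a$ whose labels $|\gamma^{\sL}_{j_p}|$ all lie in $\Dn{\lambda}$. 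By the construction of $\prec$ in \eqref{eq:def_RO_2}, the map $\alpha \mapsto -\alpha$ sends the $\vtl$-order on $\Dn{\lambda}$ onto the $\prec$-order on $-\Dn{\lambda}$, so the $\prec$-decreasing chain $\gamma^{\sL}_{j_{m_a+1}} \succ \cdots \succ \gamma^{\sL}_{j_{n_a}}$ (which holds by definition of $<$ on $\Daf{\lambda}$ together with $\beta^{\sL}_1 < \cdots < \beta^{\sL}_r$) translates to $|\gamma^{\sL}_{j_{m_a+1}}| \vtr \cdots \vtr |\gamma^{\sL}_{j_{n_a}}|$. This is label-increasing with respect to the reversed reflection order $\vtl^{*}$ on $\Delta^+$, so Theorem~\ref{thm:shellability} (applied to $\vtl^{*}$) identifies this subpath as the unique label-increasing directed path from $u_{m_a}$ to $u_{n_a}$ with respect to $\vtl^{*}$, and hence of minimum length in $\QBG(W)$. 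Therefore $\ell(u_{m_a} \Rightarrow u_{n_a}) = n_a - m_a$, and an identical argument applied to the $d=1$ group yields $\ell(\iota(\Xi(A)) \Rightarrow \ed(A)) = \ell(u_{m_t} \Rightarrow u_s) = s - m_t$.

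Combining these gives
\[
\nega(\Xi(A)) + \ell(\iota(\Xi(A)) \Rightarrow \ed(A)) = \sum_{a=1}^{t-1}(n_a - m_a) + (s - m_t) = n(A),
\]
which is the desired equality. The hard step is the one invoking shellability: one must carefully track the correspondence between the $\vtl$-order on $\Delta^+$ and the $\prec$-order on $w(\lambda)\Delta^+$ in order to see that the negative-label portions of $\bp(A)$, though $\prec$-decreasing at the level of $w(\lambda)\Delta^+$, become strictly increasing under the reverse reflection order $\vtl^{*}$ on $\Delta^+$, so that Theorem~\ref{thm:shellability} identifies them as minimum-length paths in $\QBG(W)$.
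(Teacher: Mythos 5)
Your argument is correct and follows essentially the same route as the paper's (much terser) proof: the paper likewise identifies $n(A)$ with the number of negative labels $\gamma^{\sL}_{j_k}$ in $\bp(A)$ and matches these against the $(\lambda,-)$-segments of $\Xi(A)$ together with the segment from $\iota(\Xi(A))$ to $\ed(A)$. Your explicit appeal to shellability with respect to the reversed order $\vtl^{*}$, showing that these label-monotone segments are shortest paths so that their lengths equal $\ell(x_{k+1} \Rightarrow y_k)$ and $\ell(\iota(\Xi(A)) \Rightarrow \ed(A))$, is precisely the point the paper leaves implicit, and you handle it correctly.
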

\begin{proof}
Let $A = \{ j_{1}, \ldots, j_{s} \} \in \CA(w, \Gamma_{\vtl}(\lambda))$. 
Then, $n(A)$ counts the number of roots $\gamma_{j_{k}}^{\sL}$, $k = 1, \ldots, s$, such that $\gamma_{j_{k}}^{\sL} \in \Delta^{-}$. 
Therefore, the desired equality follows since $u \xRightarrow{(\lambda, +)} \kappa(\Xi(A))$ and $\iota(\Xi(A)) \xRightarrow{(\lambda, -)} \ed(A)$. This proves the proposition. 
\end{proof}

\begin{corollary}\label{cor:forgetful_n}
For $A \in \CA(w, \Gamma_{\vtl}(\lambda))$, we have $n(A) \equiv \nega(\Xi(A)) + \ell(\ed(A)) - \ell(\iota(\Xi(A))) \mod 2$. 
\end{corollary}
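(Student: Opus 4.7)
The plan is to derive the corollary directly from the preceding proposition by applying the parity statement for path lengths in the quantum Bruhat graph. Recall from the discussion of $\QBG(W)$ in Section~\ref{sec:notation} that $\ell(v \Rightarrow w) \equiv \ell(w) - \ell(v) \mod 2$ for all $v, w \in W$; this follows from the observation that a Bruhat edge changes the length by $\pm 1$ while a quantum edge changes it by an odd number, $1 - 2 \pair{\rho}{\alpha^{\vee}}$, so each edge of a shortest directed path flips the parity.

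First, I would invoke the immediately preceding proposition to write $n(A) = \nega(\Xi(A)) + \ell(\iota(\Xi(A)) \Rightarrow \ed(A))$. Then, applying the parity relation above with $v = \iota(\Xi(A))$ and $w = \ed(A)$, I obtain
\begin{equation}
\ell(\iota(\Xi(A)) \Rightarrow \ed(A)) \equiv \ell(\ed(A)) - \ell(\iota(\Xi(A))) \mod 2.
\end{equation}
Substituting this into the proposition yields the congruence asserted in Corollary~\ref{cor:forgetful_n}.

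There is no real obstacle: the work has already been done in the proposition, and the corollary is a one-line parity reduction using a standard (and explicitly recalled) property of the quantum Bruhat graph. The only thing to be careful about is that the parity statement applies to $\ell(v \Rightarrow w)$ as defined for a shortest path; since the proposition gives us exactly this quantity, no additional argument is needed.
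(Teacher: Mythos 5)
Your proposal is correct and matches the paper's (implicit) argument: the corollary is stated without proof precisely because it follows from the preceding proposition $n(A) = \nega(\Xi(A)) + \ell(\iota(\Xi(A)) \Rightarrow \ed(A))$ together with the parity fact $\ell(v \Rightarrow w) \equiv \ell(w) - \ell(v) \bmod 2$ recalled after Theorem~\ref{thm:shellability}. Your justification of that parity fact (each Bruhat or quantum edge changes the length by an odd amount) is also the standard one, so nothing further is needed.
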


Next, we rewrite $\wt(A)$ in terms of the weight of $\Xi(A)$. 

\begin{proposition}\label{prop:forgetful_wt}
For $A \in \CA(w, \Gamma_{\vtl}(\lambda))$, we have $\wt(A) = \wt(\Xi(A))$. 
\end{proposition}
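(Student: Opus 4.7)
The plan is to reduce both sides to the common telescoping expression $\sum_{b=1}^{t} (v_b - v_{b-1})\, u_{m_b}\lambda$, where $v_b$ denotes the common value of $d_{j_k}$ on the $b$-th block $[m_b+1, m_{b+1}]$ of indices (with conventions $m_0 := 0$, $m_{t+1} := s := |A|$, $v_0 := 0$, $v_t := 1$). First I would derive a closed-form expression for $\wt(A)$. Each affine reflection admits the decomposition $s_{\beta,-l} = t_{-l\beta}\, s_\beta$, and the conjugation identity $s_\beta\, t_\mu\, s_\beta^{-1} = t_{s_\beta\mu}$ allows me to collect all translations on the left:
\begin{equation}
s_{\beta_{j_1},-l_{j_1}}\cdots s_{\beta_{j_s},-l_{j_s}} = t_{-\sum_{k=1}^{s} l_{j_k}\, s_{\beta_{j_1}}\cdots s_{\beta_{j_{k-1}}}\beta_{j_k}}\, s_{\beta_{j_1}}\cdots s_{\beta_{j_s}}.
\end{equation}
Applying this operator to $-\lambda$, multiplying by $-w$, and using $u_k = w s_{\beta_{j_1}}\cdots s_{\beta_{j_k}}$ (valid since $s_{|\beta|} = s_\beta$), the formula collapses to $\wt(A) = u_s\lambda + \sum_{k=1}^{s} l_{j_k}\, u_{k-1}\beta_{j_k}$.

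Next I would turn each summand into a telescoping difference. From $\beta_{j_k}^{\vee} = \sgn(\beta_{j_k})|\beta_{j_k}|^{\vee}$ one gets $\pair{\lambda}{\beta_{j_k}^{\vee}}\beta_{j_k} = \pair{\lambda}{|\beta_{j_k}|^{\vee}}|\beta_{j_k}|$; combining this with $l_{j_k} = d_{j_k}\pair{\lambda}{\beta_{j_k}^{\vee}}$ (immediate from the definitions of $l_{j_k}$ and $d_{j_k}$) and $\pair{\lambda}{|\beta_{j_k}|^{\vee}}|\beta_{j_k}| = \lambda - s_{|\beta_{j_k}|}\lambda$ gives $l_{j_k}\, u_{k-1}\beta_{j_k} = d_{j_k}(u_{k-1}-u_k)\lambda$, so
\begin{equation}
\wt(A) = u_s\lambda + \sum_{k=1}^{s} d_{j_k}\,(u_{k-1}-u_k)\lambda.
\end{equation}
Grouping the inner sum by constant-$d$ blocks and telescoping within each block produces $(u_{m_b}-u_{m_{b+1}})\lambda$; the $b=0$ contribution drops because $v_0 = 0$, and a single Abel summation absorbs the leading $u_s\lambda$ (using $v_t = 1$ and $m_{t+1} = s$) to give $\wt(A) = \sum_{b=1}^{t}(v_b - v_{b-1})\, u_{m_b}\lambda$.

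Finally, I would expand $\wt(\Xi(A)) = \sum_{k=1}^{t}(\sigma_k - \sigma_{k-1})x_k\lambda$ directly using $x_k = u_{m_{t-k+1}}$ and $\sigma_k = 1 - d_{j_{m_{t-k+1}}}$ (with $\sigma_0 = 0$, $\sigma_t = 1$); the substitution $b = t-k+1$ then shows $(\sigma_k - \sigma_{k-1})x_k\lambda = (v_b - v_{b-1})\, u_{m_b}\lambda$, matching the expression for $\wt(A)$. The only substantive step is the first one, the affine-reflection factorization; the remaining manipulations are bookkeeping via the sign identity for $\beta_{j_k}^{\vee}$ and double telescoping, and the edge cases $m_1 = 0$ or $m_t = s$ are handled uniformly by the conventions $v_0 = 0$ and $v_t = 1$.
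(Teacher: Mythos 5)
Your argument is correct, and it reaches the conclusion by a genuinely different organization than the paper's. The paper proceeds by induction on $|A|$: its Step~1 uses exactly the affine identities you invoke ($s_{\gamma,k}=t_{k\gamma}s_{|\gamma|}$, $s_{\gamma}t_{\nu}=t_{s_{\gamma}\nu}s_{\gamma}$) to show that appending the last index $j_{s}$ changes $\wt(A)$ by $\bigl(\deg((\beta_{j_{s}}^{\sL})^{\vee})-\pair{\lambda}{\overline{(\beta_{j_{s}}^{\sL})^{\vee}}}\bigr)\ed(B)\,\overline{\beta_{j_{s}}^{\sL}}$, and its Step~2 verifies, via a three-case analysis ($d_{j_{s-1}}<d_{j_{s}}<1$, $d_{j_{s-1}}=d_{j_{s}}<1$, $d_{j_{s}}=1$) comparing $\Xi(A)$ with $\Xi(B)$, that $\wt(\Xi(A))$ changes by the same amount. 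You instead collect all translations at once to get the closed form $\wt(A)=u_{s}\lambda+\sum_{k}l_{j_{k}}u_{k-1}\gamma_{j_{k}}^{\sL}=u_{s}\lambda+\sum_{k}d_{j_{k}}(u_{k-1}-u_{k})\lambda$ (legitimate, since for the suitable chain $l_{j_{k}}=\deg((\beta_{j_{k}}^{\sL})^{\vee})$, hence $l_{j_{k}}=d_{j_{k}}\pair{\lambda}{(\gamma_{j_{k}}^{\sL})^{\vee}}$, and $\pair{\lambda}{\beta^{\vee}}\beta=\pair{\lambda}{|\beta|^{\vee}}|\beta|$), then telescope within the constant-$d$ blocks (these are consecutive because $d_{j_{1}}\le\cdots\le d_{j_{s}}$ for a suitable chain) and Abel-sum to land on $\sum_{b=1}^{t}(v_{b}-v_{b-1})u_{m_{b}}\lambda$; expanding $\wt(\Xi(A))$ from the definition of $\Xi$ with $x_{k}=u_{m_{t-k+1}}$ and $\sigma_{k}=1-v_{t-k}$ and reindexing $b=t-k+1$ gives the same expression, and your conventions $m_{0}=0$, $m_{t+1}=s$, $v_{0}=0$, $v_{t}=1$ correctly absorb the paper's boundary conventions $m_{1}:=0$ and $m_{t}:=s$. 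What your route buys is the elimination of both the induction and the case analysis, since $\wt(\Xi(A))$ is computed directly rather than through its increment; what the paper's route buys is locality, in that each inductive step only involves the last root and the comparison of $\Xi(A)$ with $\Xi(B)$, with no global block bookkeeping. I checked each of your asserted identities and the edge cases, and I see no gap.
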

\begin{proof}
We prove the proposition by induction on $|A|$. 
If $|A| = 0$, i.e., $A = \emptyset$, then by the definition of $\wt(A)$, we have $\wt(A) = w\lambda$. 
In this case, we see that $\Xi(A) = (w; ; 0, 1)$, and hence that $\wt(\Xi(A)) = w\lambda$. 
Thus we obtain $\wt(A) = \wt(\Xi(A))$. 

Let $A = \{ j_{1}, \ldots, j_{s} \} \in \CA(w, \Gamma_{\vtl}(\lambda)))$, with $s \ge 1$. 
Set $B := \{ j_{1}, \ldots, j_{s-1} \}$. 
By the induction hypothesis, we have $\wt(B) = \wt(\Xi(B))$. 

\paragraph{\underline{Step~1: $\wt(A) = \wt(B) + (-\pair{\lambda}{\overline{(\beta_{j_{s}}^{\sL})^{\vee}}} + \deg((\beta_{j_{s}}^{\sL})^{\vee})) \ed(B) \overline{\beta_{j_{s}}^{\sL}}$}.} \ \\ 

For $\nu \in \Fh_{\BR}^{\ast}$, we define $t_{\nu}: \Fh_{\BR}^{\ast} \rightarrow \Fh_{\BR}^{\ast}$ by $t_{\nu}(\xi) := \xi + \nu$. 
We have 
$s_{\gamma} t_{\nu} = t_{s_{\gamma}(\nu)} s_{\gamma}$ for $\nu \in \Fh_{\BR}^{\ast}$ and $\gamma \in \Delta^{+}$, 
and $t_{\nu_{1}}t_{\nu_{2}} = t_{\nu_{1}+\nu_{2}}$ for $\nu_{1}, \nu_{2} \in \Fh_{\BR}^{\ast}$. 
Also, for $\gamma \in \Delta$ and $k \in \BZ$, we have $t_{k\gamma} s_{|\gamma|} = s_{\gamma, k}$. 

By the definition of $\wt(A)$, we see that 
\begin{align}
\wt(A) &= -w s_{\gamma_{j_{1}}^{\sL}, -\deg((\beta_{j_{1}}^{\sL})^{\vee})} \cdots s_{\gamma_{j_{s-1}}^{\sL}, -\deg((\beta_{j_{s-1}}^{\sL})^{\vee})} s_{\gamma_{j_{s}}^{\sL}, -\deg((\beta_{j_{s}}^{\sL})^{\vee})} (-\lambda) \\ 
\begin{split}
&= -w s_{\gamma_{j_{1}}^{\sL}, -\deg((\beta_{j_{1}}^{\sL})^{\vee})} \cdots s_{\gamma_{j_{s-1}}^{\sL}, -\deg((\beta_{j_{s-1}}^{\sL})^{\vee})} (-\lambda \\ 
& \hspace{60mm} -(\pair{-\lambda}{(\gamma_{j_{s}}^{\sL})^{\vee}} + \deg((\beta_{j_{s}}^{\sL})^{\vee})) \gamma_{j_{s}}^{\sL})
\end{split} \\ 
\begin{split}
&= -w s_{\gamma_{j_{1}}^{\sL}, -\deg((\beta_{j_{1}}^{\sL})^{\vee})} \cdots s_{\gamma_{j_{s-1}}^{\sL}, -\deg((\beta_{j_{s-1}}^{\sL})^{\vee})} (-\lambda \\ 
& \hspace{60mm} -(-\pair{\lambda}{\overline{(\beta_{j_{s}}^{\sL})^{\vee}}} + \deg((\beta_{j_{s}}^{\sL})^{\vee})) \overline{\beta_{j_{s}}^{\sL}}). 
\end{split}  
\end{align}
We set $d := -\pair{\lambda}{\overline{(\beta_{j_{s}}^{\sL})^{\vee}}} + \deg((\beta_{j_{s}}^{\sL})^{\vee})$. 
By repeated application of the identity $t_{k\gamma} s_{|\gamma|} = s_{\gamma, k}$ for $\gamma \in \Delta$ and $k \in \BZ$, 
we deduce that there exists $\xi \in \Fh_{\BR}^{\ast}$ (in fact, $\xi \in Q$) such that 
\begin{equation}
w s_{\gamma_{j_{1}}^{\sL}, -\deg((\beta_{j_{1}}^{\sL})^{\vee})} \cdots s_{\gamma_{j_{s-1}}^{\sL}, -\deg((\beta_{j_{s-1}}^{\sL})^{\vee})} = w s_{|\gamma_{j_{1}}^{\sL}|} \cdots s_{|\gamma_{j_{s-1}}^{\sL}|} t_{\xi}. 
\end{equation}
Therefore, we compute: 
\begin{align}
\wt(A) &= -w s_{|\gamma_{j_{1}}^{\sL}|} \cdots s_{|\gamma_{j_{s-1}}^{\sL}|} t_{\xi} (-\lambda - d\overline{\beta_{j_{s}}^{\sL}}) \\ 
&= -w s_{|\gamma_{j_{1}}^{\sL}|} \cdots s_{|\gamma_{j_{s-1}}^{\sL}|} (-\lambda - d\overline{\beta_{j_{s}}^{\sL}} + \xi) \\ 
&= -w s_{|\gamma_{j_{1}}^{\sL}|} \cdots s_{|\gamma_{j_{s-1}}^{\sL}|} (-\lambda + \xi) + d w s_{|\gamma_{j_{1}}^{\sL}|} \cdots s_{|\gamma_{j_{s-1}}^{\sL}|} (\overline{\beta_{j_{s}}^{\sL}}) \\ 
&= -w s_{|\gamma_{j_{1}}^{\sL}|} \cdots s_{|\gamma_{j_{s-1}}^{\sL}|}t_{\xi} (-\lambda) + d w s_{|\gamma_{j_{1}}^{\sL}|} \cdots s_{|\gamma_{j_{s-1}}^{\sL}|} (\overline{\beta_{j_{s}}^{\sL}}) \\ 
\begin{split}
&= \underbrace{-w s_{\gamma_{j_{1}}^{\sL}, -\deg((\beta_{j_{1}}^{\sL})^{\vee})} \cdots s_{\gamma_{j_{s-1}}^{\sL}, -\deg((\beta_{j_{s-1}}^{\sL})^{\vee})}(-\lambda)}_{\wt(B)} \\ 
& \hspace{50mm} + d \underbrace{w s_{|\gamma_{j_{1}}^{\sL}|} \cdots s_{|\gamma_{j_{s-1}}^{\sL}|}}_{\ed(B)} (\overline{\beta_{j_{s}}^{\sL}}) 
\end{split} \\ 
& = \wt(B) + (-\pair{\lambda}{\overline{(\beta_{j_{s}}^{\sL})^{\vee}}} + \deg((\beta_{j_{s}}^{\sL})^{\vee})) \ed(B) (\overline{\beta_{j_{s}}^{\sL}}), 
\end{align}
as desired. 

\paragraph{\underline{Step~2: $\wt(\Xi(A)) = \wt(\Xi(B)) + (-\pair{\lambda}{\overline{(\beta_{j_{s}}^{\sL})^{\vee}}} + \deg((\beta_{j_{s}}^{\sL})^{\vee})) \ed(B) \overline{\beta_{j_{s}}^{\sL}}$.}} \ \\ 

We have the following three cases: 
\begin{enu}
\item $d_{j_{s-1}} < d_{j_{s}} < 1$; 
\item $d_{j_{s-1}} = d_{j_{s}} < 1$; 
\item $d_{j_{s}} = 1$. 
\end{enu}

Assume case (1). Then we have $m_{t-1} = s-1$ and $m_{t} = s$. 
Hence 
\begin{equation}
\Xi(B) = (u_{m_{t-1}}, \ldots, u_{m_{1}}; u_{n_{t-2}}, \ldots, u_{n_{1}}; 0, 1-d_{j_{m_{t-1}}}, \ldots, 1-d_{j_{m_{2}}}, 1), 
\end{equation}
and 
\begin{equation}
\Xi(A) = (u_{m_{t}}, \ldots, u_{m_{1}}; u_{n_{t-1}}, \ldots, u_{n_{1}}; 0, 1-d_{j_{m_{t}}}, \ldots, 1-d_{j_{m_{2}}}, 1). 
\end{equation}
Note that $u_{m_{t}} = u_{m_{t-1}} s_{|\gamma_{j_{s}}^{\sL}|}$. 
Also, note that $\ed(B) = u_{m_{t-1}}$. 
Therefore, by the definition of weights, we compute: 
\begin{align}
& \wt(\Xi(A)) \\
\begin{split}
&= \wt(\Xi(B)) - (1-d_{j_{m_{t-1}}}) u_{m_{t-1}} \lambda \\ 
& \hspace{30mm} + ((1-d_{j_{m_{t-1}}}) - (1-d_{j_{m_{t}}})) u_{m_{t-1}} \lambda + (1-d_{j_{m_{t}}}) u_{m_{t}} \lambda 
\end{split} \\
&= \wt(\Xi(B)) - (1-d_{j_{m_{t}}}) u_{m_{t-1}} \lambda + (1-d_{j_{m_{t}}}) u_{m_{t}} \lambda \\
&= \wt(\Xi(B)) - (1-d_{j_{m_{t}}}) u_{m_{t-1}} \lambda + (1-d_{j_{m_{t}}}) u_{m_{t-1}} s_{|\gamma_{j_{s}}^{\sL}|} \lambda \\
&= \wt(\Xi(B)) - (1-d_{j_{m_{t}}}) u_{m_{t-1}} \lambda + (1-d_{j_{m_{t}}}) u_{m_{t-1}} (\lambda - \pair{\lambda}{|\gamma_{j_{s}}^{\sL}|^{\vee}} |\gamma_{j_{s}}^{\sL}|) \\
&= \wt(\Xi(B)) - (1-d_{j_{m_{t}}}) \pair{\lambda}{|\gamma_{j_{s}}^{\sL}|^{\vee}} u_{m_{t-1}} |\gamma_{j_{s}}^{\sL}| \\
&= \wt(\Xi(B)) - (1-d_{j_{m_{t}}}) \pair{\lambda}{(\gamma_{j_{s}}^{\sL})^{\vee}} u_{m_{t-1}} \gamma_{j_{s}}^{\sL} \\
&= \wt(\Xi(B)) - \left( 1-\frac{\deg((\beta_{j_{s}}^{\sL})^{\vee})}{\pair{\lambda}{\overline{(\beta_{j_{s}}^{\sL})^{\vee}}}} \right) \pair{\lambda}{\overline{(\beta_{j_{s}}^{\sL})^{\vee}}} \ed(B) \overline{\beta_{j_{s}}^{\sL}} \\
&= \wt(\Xi(B)) + (-\pair{\lambda}{\overline{(\beta_{j_{s}}^{\sL})^{\vee}}} + \deg((\beta_{j_{s}}^{\sL})^{\vee})) \ed(B) \overline{\beta_{j_{s}}^{\sL}}, 
\end{align}
as desired. 

Next, assume case (2). Observe that 
\begin{equation}
\Xi(B) = (u_{m_{t}-1}, u_{m_{t-1}}, \ldots, u_{m_{1}}; z, u_{n_{t-2}}, \ldots, u_{n_{1}}; 0, 1-d_{j_{m_{t}}}, \ldots, 1-d_{j_{m_{2}}}, 1), 
\end{equation}
where 
\begin{equation}
z = \begin{cases} u_{n_{t-1}}, & \text{if $u_{n_{t-1}} \not= u_{m_{t}}$}, \\ u_{m_{t}-1}, & \text{if $u_{n_{t-1}} = u_{m_{t}}$}, \end{cases}
\end{equation}
and that 
\begin{equation}
\Xi(A) = (u_{m_{t}}, \ldots, u_{m_{1}}; u_{n_{t-1}}, \ldots, u_{n_{1}}; 0, 1-d_{j_{m_{t}}}, \ldots, 1-d_{j_{m_{2}}}, 1). 
\end{equation}
Note that $m_{t} = s$, $m_{t-1} < s-1$, and that $u_{m_{t}} = u_{m_{t}-1} s_{|\gamma_{j_{s}}^{\sL}|}$. 
Also, note that $\ed(B) = u_{m_{t}-1}$. 
Therefore, we compute: 
\begin{align}
\wt(\Xi(A)) &= \wt(\Xi(B)) - (1-d_{j_{m_{t}}}) u_{m_{t}-1} \lambda + (1-d_{j_{m_{t}}}) u_{m_{t}} \lambda \\
&= \wt(\Xi(B)) - (1-d_{j_{m_{t}}}) u_{m_{t}-1} \lambda + (1-d_{j_{m_{t}}}) u_{m_{t}-1} s_{|\gamma_{j_{s}}^{\sL}|} \lambda \\
\begin{split}
&= \wt(\Xi(B)) - (1-d_{j_{m_{t}}}) u_{m_{t}-1} \lambda \\ 
& \hspace{30mm} + (1-d_{j_{m_{t}}}) u_{m_{t}-1} (\lambda - \pair{\lambda}{|\gamma_{j_{s}}^{\sL}|^{\vee}} |\gamma_{j_{s}}^{\sL}|) 
\end{split} \\
&= \wt(\Xi(B)) - (1-d_{j_{m_{t}}}) \pair{\lambda}{|\gamma_{j_{s}}^{\sL}|^{\vee}} u_{m_{t}-1} |\gamma_{j_{s}}^{\sL}| \\
&= \wt(\Xi(B)) - (1-d_{j_{m_{t}}}) \pair{\lambda}{(\gamma_{j_{s}}^{\sL})^{\vee}} u_{m_{t}-1} \gamma_{j_{s}}^{\sL} \\
&= \wt(\Xi(B)) - \left( 1- \frac{\deg((\beta_{j_{s}}^{\sL})^{\vee})}{\pair{\lambda}{\overline{(\beta_{j_{s}}^{\sL})^{\vee}}}} \right) \pair{\lambda}{\overline{(\beta_{j_{s}}^{\sL})^{\vee}}} \ed(B) \overline{\beta_{j_{s}}^{\sL}} \\
&= \wt(\Xi(B)) + (-\pair{\lambda}{\overline{(\beta_{j_{s}}^{\sL})^{\vee}}} + \deg((\beta_{j_{s}}^{\sL})^{\vee})) \ed(B) \overline{\beta_{j_{s}}^{\sL}}, 
\end{align}
as desired.

Finally, in case (3), we see that $\Xi(A) = \Xi(B)$. Hence $\wt(\Xi(A)) = \wt(\Xi(B))$. 
Since $d_{j_{s}} = \deg((\beta_{j_{s}}^{\sL})^{\vee})/\pair{\lambda}{\overline{(\beta_{j_{s}}^{\sL})^{\vee}}} = 1$, 
we have $-\pair{\lambda}{\overline{(\beta_{j_{s}}^{\sL})^{\vee}}} + \deg((\beta_{j_{s}}^{\sL})^{\vee}) = 0$. 
This proves the desired equality. 

\paragraph{\underline{Step~3: $\wt(A) = \wt(\Xi(A))$}} \ \\ 

By Steps~1 and 2, we deduce that 
\begin{align}
\wt(A) &= \wt(B) + (-\pair{\lambda}{\overline{(\beta_{j_{s}}^{\sL})^{\vee}}} + \deg((\beta_{j_{s}}^{\sL})^{\vee})) \ed(B) \overline{\beta_{j_{s}}^{\sL}} \\
&= \wt(\Xi(B)) + (-\pair{\lambda}{\overline{(\beta_{j_{s}}^{\sL})^{\vee}}} + \deg((\beta_{j_{s}}^{\sL})^{\vee})) \ed(B) \overline{\beta_{j_{s}}^{\sL}} \\
&= \wt(\Xi(A)), 
\end{align}
where we have used the induction hypothesis for the 2nd equality. 
This completes the proof of the proposition. 
\end{proof}

\begin{example}
Assume that $\Fg$ is of type $A_{2}$. Let $\lambda = -\vpi_{1}+2\vpi_{2}$ and $w = s_{1}$. 
Table~\ref{tab:admissible} gives the list of $\wt(A)$ for all $A \in \CA(s_{1}, \Gamma_{\vtl}(\lambda))$. 
Also, we computed $\wt(\eta)$ for all $\eta \in \IQLS(\lambda)$ in Example~\ref{ex:IQLS_n=2}. 
Hence, by comparing Table~\ref{tab:admissible} and Table~\ref{tab:forgetful}, we can verify that $\wt(A) = \wt(\Xi(A))$ for all $A \in \CA(s_{1}, \Gamma_{\vtl}(\lambda))$. 

\end{example}

\subsection{The ``\texorpdfstring{$q = 0$}{q=0}'' counterpart of the forgetful map}
If $A \in \CA|_{q=0}(w, \Gamma_{\vtl}(\lambda))$, then $\bp(A)$ is a directed path in $\BG(W)$. 
Therefore, by the definition of $\ILS(\lambda)$, we obtain the following. 


\begin{corollary} \label{cor:forgetful_q=0}
We have $\Xi(\CA|_{q=0}(w, \Gamma_{\vtl}(\lambda))) \subset \ILS(\lambda)$. 
Moreover, we have 
%
%
\begin{equation}
\tXi(\CA|_{q=0}(w, \Gamma_{\vtl}(\lambda))) = \left\{ (\eta, u) \in \ILS(\lambda) \times W \ \middle| \begin{array}{ll} w \xRightarrow{(\lambda, +, q=0)} \kappa(\eta) \\ \iota(\eta) \xRightarrow{(\lambda, -, q=0)} u \end{array} \right\}. \label{eq:image_forgetful_q=0} 
\end{equation}
\end{corollary}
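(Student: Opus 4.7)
The plan is to proceed by adapting the proof of Theorem~\ref{thm:im_forgetful} almost verbatim, tracking carefully that the restriction from $\QBG(W)$ to $\BG(W)$ is preserved at every step of both the forward construction $\Xi$ and the backward construction used in the proof of surjectivity onto the image.

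First I would establish the inclusion $\Xi(\CA|_{q=0}(w, \Gamma_{\vtl}(\lambda))) \subset \ILS(\lambda)$. Fix $A = \{j_{1} < \cdots < j_{s}\} \in \CA|_{q=0}(w, \Gamma_{\vtl}(\lambda))$, so that $\bp(A)$ is by definition a directed path in $\BG(W)$. The construction of $\Xi(A)$ proceeds by cutting $\bp(A)$ along the level sets of the function $k \mapsto d_{j_{k}}$; since every edge of $\bp(A)$ is a Bruhat edge, every sub-path used to define the consecutive components $x_{i+1} \to \cdots \to y_{i}$ and $y_{i} \to \cdots \to x_{i}$ of $\Xi(A)$ lies in $\BG(W)$. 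Combined with the integrality computation already carried out in Section~\ref{sec:forgetful}, this yields precisely the $q=0$ conditions $x_{i+1} \xRightarrow{(\lambda, -, q=0)}_{\raisebox{1ex}{\scriptsize{$\sigma_{i}$}}} y_{i}$ and $y_{i} \xRightarrow{(\lambda, +, q=0)}_{\raisebox{1ex}{\scriptsize{$\sigma_{i}$}}} x_{i}$ required for membership in $\ILS(\lambda)$.

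Next I would prove the inclusion $\tXi(\CA|_{q=0}(w, \Gamma_{\vtl}(\lambda))) \subset$ (RHS of \eqref{eq:image_forgetful_q=0}). Repeating the first half of the proof of Theorem~\ref{thm:im_forgetful}, the two distinguished sub-paths $w \to \cdots \to \kappa(\Xi(A))$ (labels in $\Dp{\lambda}$) and $\iota(\Xi(A)) \to \cdots \to \ed(A)$ (labels in $-\Dn{\lambda}$, so that $|{\cdot}|$ lies in $\Dn{\lambda}$) are again segments of $\bp(A)$; since $\bp(A) \in \BG(W)$, these segments are also Bruhat. This upgrades the conclusions $w \xRightarrow{(\lambda, +)} \kappa(\Xi(A))$ and $\iota(\Xi(A)) \xRightarrow{(\lambda, -)} \ed(A)$ to their $q=0$ analogues.

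For the reverse inclusion, given $(\eta, u)$ in the right-hand side of \eqref{eq:image_forgetful_q=0}, I would recycle the construction from the second half of the proof of Theorem~\ref{thm:im_forgetful}. The only information used there consists of the directed paths \eqref{eq:inv_forgetful1}--\eqref{eq:inv_forgetful4}; by hypothesis each of these now lies in $\BG(W)$. The concatenation of these paths is, up to reindexing by the correspondence $\Phi$, exactly the directed path $\bp(A)$ of the resulting admissible subset $A$, so $\bp(A)$ is itself a path in $\BG(W)$ and hence $A \in \CA|_{q=0}(w, \Gamma_{\vtl}(\lambda))$. Because Theorem~\ref{thm:im_forgetful} already guarantees $\tXi(A) = (\eta, u)$, this completes the equality. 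I do not foresee a genuine obstacle: the only point worth verifying is that the conversion between quantum edges and the quantum-edge-free setting does not interact with the lexicographic ordering on $\Daf{\lambda}$ or with the map $\Phi$, which it does not, since both are determined purely by the labels $\gamma^{\sL}$ and the degrees $\deg((\beta^{\sL})^{\vee})$ and are insensitive to whether an edge of $\QBG(W)$ is Bruhat or quantum.
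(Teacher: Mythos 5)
Your proposal is correct and follows the same route as the paper: the paper treats this as an immediate corollary of the construction of $\Xi$ and of the proof of Theorem~\ref{thm:im_forgetful}, simply noting that for $A \in \CA|_{q=0}(w, \Gamma_{\vtl}(\lambda))$ the path $\bp(A)$ lies in $\BG(W)$, so all sub-paths (and, conversely, the witnessing paths \eqref{eq:inv_forgetful1}--\eqref{eq:inv_forgetful4} in the reverse construction) consist of Bruhat edges, which is exactly what you spell out.
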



\section{Application}\label{sec:equivariant}
As an application of the forgetful map, 
we rewrite an identity of Chevalley type for the graded characters of level-zero Demazure submodules, given in \cite{KLN}. 
In this section, we take and fix an arbitrary $\lambda \in P$. Let us take a reflection order ${\vtl} \in \RO(\lambda, \Delta^{+})$, and the suitable $\lambda$-chain $\Gamma_{\vtl}(\lambda)$ for $\vtl$. 

Let $U_{\q}(\Fg_{\af})$ denote the quantum affine algebra associated to $\Fg_{\af}$ with Chevalley generators $E_{i}, F_{i} \in U_{\q}(\Fg_{\af})$, $i \in I_{\af} = I \sqcup \{0\}$, 
where $\q$ is an indeterminate. 
We denote by $U_{\q}^{-}(\Fg_{\af}) := \langle F_{i} \rangle_{i \in I_{\af}} \subset U_{\q}(\Fg_{\af})$ 
the subalgebra of $U_{\q}(\Fg_{\af})$ generated by $\{F_{i} \mid i \in I_{\af}\}$. 

For each $\lambda \in P^{+}$, we denote by $V(\lambda)$ 
the \emph{level-zero extremal weight module} of extremal weight $\lambda$ over $U_{\q}(\Fg_{\af})$, 
which is equipped with a family $\{v_{x}\}_{x \in W_{\af}} \subset V(\lambda)$ of extremal weight vectors, 
where $v_{x} \in V(\lambda)$, $x \in W_{\af}$, is an extremal weight vector of weight $x\lambda$ (see \cite[Proposition~8.2.2]{Kas}). 
For $x \in W_{\af}$ and $\lambda \in P^{+}$, the \emph{Demazure submodule} $V_{x}^{-}(\lambda)$ of $V(\lambda)$ is defined by $V_{x}^{-}(\lambda) := U_{\q}^{-}(\Fg_{\af})v_{x}$. 
The module $V_{x}^{-}(\lambda)$ has a weight space decomposition with respect to the Cartan subalgebra $\Fh_{\af}$: 
\begin{equation}
V_{x}^{-}(\lambda) = \bigoplus_{\gamma \in Q, k \in \BZ} V_{x}^{-}(\lambda)_{\lambda + \gamma + k\delta}
\end{equation}
with finite-demensional (over $\BC(\q)$) weight spaces $V_{x}^{-}(\lambda)_{\lambda + \gamma + k\delta}$, $\gamma \in Q$ and $k \in \BZ$. 
As in \cite[(2.21)]{KNS}, we define the \emph{graded character} $\gch V_{x}^{-}(\lambda)$ of $V_{x}^{-}(\lambda)$ by 
\begin{equation}
\gch V_{x}^{-}(\lambda) := \sum_{\gamma \in Q, k \in \BZ} \dim(V_{x}^{-}(\lambda)_{\lambda + \gamma + k\delta}) q^{k} e^{\lambda + \gamma} \in \BZ[P]\pra{q^{-1}}, 
\end{equation}
where $q$ is an indeterminate (not to be confused with $\q$). 
If $x = w t_{\xi}$ with $w \in W$ and $\xi \in Q^{\vee}$, then we know that $\gch V_{x}^{-}(\lambda) \in \mathbb{Z}[P]\bra{q^{-1}}q^{-\pair{\lambda}{\xi}}$ (see \cite[(2.22)]{KNS}); 
in fact, we have $\gch V_{w}^{-}(\lambda) \in \mathbb{Z}\bra{q^{-1}}[P]$ for $w \in W$. 

To describe the identity of Chevalley type for graded characters, we need some notation for partitions. 
Let $\lambda \in P$, and write it as $\lambda = \sum_{i \in I} m_{i}\vpi_{i}$. 
We define the set $\bPar(\lambda)$ by 
%
%
\begin{equation}\label{eq:def_Par}
\bPar(\lambda) := \left\{ \bchi = (\chi^{(i)})_{i \in I} \ \middle| \ \parbox{15em}{$\chi^{(i)}$ is a partition whose length is less than or equal to $\max\{m_{i}, 0\}$} \right\}. 
\end{equation}
For $\bchi = (\chi^{(i)})_{i \in I} \in \bPar(\lambda)$, we write it as $\chi^{(i)} = (\chi_{1}^{(i)} \ge \chi_{2}^{(i)} \ge \cdots \ge \chi_{l_{i}}^{(i)} > 0)$, 
where $0 \le l_{i} \le \max\{m_{i}, 0\}$ and $\chi_{1}^{(i)}, \ldots, \chi_{l_{i}}^{(i)} \in \BZ$, and set 
%
%
\begin{equation}\label{eq:def_statistics_par}
|\bchi| := \sum_{i \in I} \sum_{k = 1}^{l_{i}} \chi_{k}^{(i)}, \quad \iota(\bchi) := \sum_{i \in I} \chi_{1}^{(i)} \alpha_{i}^{\vee}; 
\end{equation}
if $\chi^{(i)} = \emptyset$, then we understand that $l_{i} = 0$ and $\chi_{1}^{(i)} = 0$. 

The following is the \emph{identity of Chevalley type} for graded characters, given in \cite{KLN}; we should mention that it is a ``representation-theoretic'' analog of the Chevalley formula for the equivariant $K$-group of semi-infinite flag manifolds, given in \cite{LNS}. 
%
%
\begin{theorem}[{\cite[Theorem~5.16]{KLN}}]\label{thm:PC-type_formula}
Let $x \in W_{\af}$, and write it 
as $x = w t_{\xi}$, with $w \in W$ and $\xi \in Q^{\vee}$. 
Let $\mu \in P^{+}$ and $\lambda \in P$ be such that $\mu + \lambda \in P^{+}$, 
and let $\Gamma$ be an arbitrary reduced $\lambda$-chain. 
Then we have 
\begin{equation}
\begin{split}\label{eq:PC-type_formula}
& \gch V_{x}^{-}(\mu + \lambda) = \\ 
& \sum_{A \in \CA(w, \Gamma)} \sum_{\bchi \in \bPar(\lambda)} (-1)^{n(A)} q^{-\height(A) - \pair{\lambda}{\xi} - |\bchi|} e^{\wt(A)} \gch V_{\ed(A)t_{\xi + \down(A) + \iota(\bchi)}}^{-}(\mu). 
\end{split}
\end{equation}
\end{theorem}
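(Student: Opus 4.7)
The plan is to derive this identity from its geometric counterpart, the Chevalley formula for the torus-equivariant $K$-group of the semi-infinite flag manifold proved in \cite{LNS}, by transporting the statement through the canonical isomorphism between (an appropriate completion of) that $K$-group and the $\BZ[P]\pra{q^{-1}}$-module spanned by the graded characters $\gch V_{x}^{-}(\mu)$, $x \in \Waf$. In other words, the representation-theoretic formula should be a direct shadow of a multiplication identity on the semi-infinite Schubert calculus side.

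First I would invoke the identification of Kato-Naito-Sagaki type, under which the Schubert-variety class on the semi-infinite flag manifold labelled by $x = wt_{\xi}$ corresponds (up to a fixed sign and $q$-shift) to $\gch V_{x}^{-}(\mu)$, and multiplication by the line-bundle class associated with $\lambda$ on the geometric side corresponds to shifting the dominant parameter from $\mu$ to $\mu + \lambda$ on the character side. This step requires a careful normalization so that the $q$-grading induced by loop rotation on the geometric side matches the grading coming from the degree operator $d \in \Fh_{\af}$ on the representation side.

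Next I would apply the geometric Chevalley formula of \cite{LNS}, which asserts that, for an arbitrary reduced $\lambda$-chain $\Gamma$, the product of the line-bundle class by the Schubert class expands in the Schubert basis as a sum indexed by pairs $(A, \bchi) \in \CA(w, \Gamma) \times \bPar(\lambda)$, with the coefficients $(-1)^{n(A)} q^{-\height(A) - \pair{\lambda}{\xi} - |\bchi|} e^{\wt(A)}$ and end-point labelled by $\ed(A)\, t_{\xi + \down(A) + \iota(\bchi)}$. Applying the identification above to both sides of the geometric identity then produces the desired formula. Two technical subtleties deserve care here: the matching between $K$-classes and graded characters is most natural when $\lambda \in P^{+}$, whereas here $\lambda$ is arbitrary subject only to $\mu + \lambda \in P^{+}$, and the partition decoration $\bchi \in \bPar(\lambda)$, whose length bound $\max\{m_{i}, 0\}$ isolates the dominant part of $\lambda$, is precisely the combinatorial device that accommodates the non-dominant directions; also the $q$-shift $-\pair{\lambda}{\xi}$ comes from the loop-rotation action of $t_{\xi}$ and must be tracked carefully through the isomorphism.

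The main obstacle is the establishment of the geometric Chevalley formula of \cite{LNS} together with the Kato-Naito-Sagaki identification in the required generality; those are both substantial theorems whose proofs involve delicate analysis of semi-infinite Demazure operators and the ind-scheme structure of the semi-infinite flag manifold. Granted those inputs, however, the translation into graded characters is essentially formal, and the genuinely new content contributed by the present paper is not in proving Theorem~\ref{thm:PC-type_formula} itself but in rewriting its right-hand side through the forgetful map $\tXi: \CA(w, \Gamma_{\vtl}(\lambda)) \to \IQLS(\lambda) \times W$ constructed above, together with the statistics identities of Section~\ref{subsec:forgetful_statistics}, to obtain the reformulation announced as Theorem~\ref{thm:gch_Chevalley}.
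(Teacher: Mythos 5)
The paper does not prove this statement at all---it is imported verbatim as \cite[Theorem~5.16]{KLN}---and your proposal correctly treats it as an external input, sketching its derivation from the geometric Chevalley formula of \cite{LNS} combined with the identification, in the spirit of \cite{KNS}, of semi-infinite Schubert classes with graded characters of level-zero Demazure submodules, which is consistent with how the identity is established in the cited literature. You also rightly note that the genuinely new content of the present paper is not this identity but its reformulation via the forgetful map in Theorem~\ref{thm:gch_Chevalley}, so there is no internal proof to compare against and no gap to flag.
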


\begin{remark}\label{rhs-cancel}
The right-hand side of \eqref{eq:PC-type_formula} is zero if $\mu + \lambda \notin P^{+}$; 
see \cite[Appendix~B]{KLN}. 
\end{remark}

The aim of this section is to rewrite the identity above in terms of interpolated QLS paths. 
We introduce some additional statistics for interpolated QLS paths; 
in particular, the degree function $\Deg_{w}$ is a generalization of that for QLS paths (see \cite[Section~4]{LNSSS2}). 

\begin{definition}
Let $\eta = (x_{1}, \ldots, x_{s}; y_{1}, \ldots, y_{s-1}; \sigma_{0}, \ldots, \sigma_{s}) \in \IQLS(\lambda)$. Let $w, u \in W$ 
be such that $w \xRightarrow{(\lambda, +)} \kappa(\eta)$ and $\iota(\eta) \xRightarrow{(\lambda, -)} u$. 
We define $\xi(u, \eta, w)$ by 
\begin{equation}
\begin{split}
& \xi(u, \eta, w) := \\ 
& \wt(w \Rightarrow \kappa(\eta)) + \sum_{k = 1}^{s-1} (\wt(x_{k+1} \Rightarrow y_{k}) + \wt(y_{k} \Rightarrow x_{k})) + \wt(\iota(\eta) \Rightarrow u). 
\end{split}
\end{equation}
\end{definition}

\begin{definition}
Let $\eta = (x_{1}, \ldots, x_{s}; y_{1}, \ldots, y_{s-1}; \sigma_{0}, \ldots, \sigma_{s}) \in \IQLS(\lambda)$, and let $w \in W$ 
be such that $w \xRightarrow{(\lambda, +)} \kappa(\eta)$. We define $\Deg_{w}(\eta)$ by 
\begin{equation}
\Deg_{w}(\eta) := - \pair{\lambda}{\wt(w \Rightarrow \kappa(\eta))} - \sum_{k = 1}^{s-1} \sigma_{k} \pair{\lambda}{\wt(x_{k+1} \Rightarrow y_{k}) + \wt(y_{k} \Rightarrow x_{k})}. 
\end{equation}
\end{definition}

\begin{example}
Assume that $\Fg$ is of type $A_{2}$. Let $\lambda = -\vpi_{1} + 2\vpi_{2}$, $w = s_{1}$. 
Let $\eta = (s_{1}, s_{1}s_{2}; s_{1}s_{2}; 0, 1/2, 1) \in \IQLS(\lambda)$. 
We see that $s_{1} \xRightarrow{(\lambda, +)} \kappa(\eta) = s_{1}s_{2}$ and $\iota(\eta) = s_{1} \xRightarrow{(\lambda, -)} e$. 
Let us compute $\xi(e, \eta, s_{1})$ and $\Deg_{s_{1}}(\eta)$. 

First, observe that the directed path $s_{1} \Rightarrow \kappa(\eta) = s_{1}s_{2}$ is just the edge $s_{1} \xrightarrow{\alpha_{2}} s_{1}s_{2}$. 
Since this is a Bruhat edge, we have $\wt(s_{1} \Rightarrow \kappa(\eta)) = 0$. 
Also, since the directed path $s_{1}s_{2} \Rightarrow s_{1}s_{2}$ is the trivial one, we have $\wt(s_{1}s_{2} \Rightarrow s_{1}s_{2}) = 0$. 
Next, the directed path $s_{1}s_{2} \Rightarrow s_{1} = \iota(\eta)$ is just the quantum edge $s_{1}s_{2} \xrightarrow{\alpha_{2}} s_{1}$, and hence 
$\wt(s_{1}s_{2} \Rightarrow s_{2}) = \alpha_{2}^{\vee}$. 
In addition, the directed path $\iota(\eta) = s_{1} \Rightarrow e$ is just the quantum edge $s_{1} \xrightarrow{\alpha_{1}} e$, and hence 
$\wt(\iota(\eta) \Rightarrow e) = \alpha_{1}^{\vee}$. 
From these, we see that 
\begin{align}
\xi(e, \eta, s_{1}) &= \wt(s_{1} \Rightarrow \kappa(\eta)) + (\wt(s_{1}s_{2} \Rightarrow s_{1}s_{2}) + \wt(s_{1}s_{2} \Rightarrow s_{1})) + \wt(\iota(\eta) \Rightarrow e) \\ 
&= \alpha_{1}^{\vee} + \alpha_{2}^{\vee}, \\ 
\Deg_{w}(\eta) &= -\pair{\lambda}{\wt(s_{1} \Rightarrow \kappa(\eta))} - \frac{1}{2} \pair{\lambda}{\wt(s_{1}s_{2} \Rightarrow s_{1}s_{2}) + \wt(s_{1}s_{2} \Rightarrow s_{1})} \\ 
&= -\frac{1}{2} \pair{-\vpi_{1}+2\vpi_{2}}{\alpha_{2}^{\vee}} \\ 
&= -1. 
\end{align}

\end{example}

We will describe the relation between the statistics above for interpolated QLS paths and those for admissible subsets. 
The following lemma is obvious by the definitions of $\down(A)$ and $\xi(u, \eta, w)$. 


\begin{lemma} \label{lem:down_to_xi}
For $A \in \CA(w, \Gamma_{\vtl}(\lambda))$, we write $\tXi(A) = (\eta, u)$. 
Then, we have $\down(A) = \xi(u, \eta, w)$. 
\end{lemma}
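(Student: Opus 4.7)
The plan is to reduce $\down(A) = \xi(u, \eta, w)$ to the additivity of $\wt(\bp)$ on directed paths in $\QBG(W)$ under concatenation. Recall from the paragraph defining $\down(A)$ that $\down(A) = \wt(\bp(A))$, where $\wt(\bp) = \sum_{k} \beta_k^{\vee}$ summed over the quantum edges of $\bp$. Hence it suffices to decompose $\bp(A)$ into the directed sub-paths whose weights are summed in the definition of $\xi(u, \eta, w)$, and to evaluate each sub-path weight as a well-defined $\wt(\cdot \Rightarrow \cdot)$.

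Using the notation of the construction of $\Xi(A)$, I split $\bp(A) : w = u_0 \to u_1 \to \cdots \to u_N = \ed(A) = u$ into consecutive sub-paths: the initial segment $w \to u_{m_1}$ (all labels in $\Dp{\lambda}$, corresponding to the $d = 0$ block), followed by pairs $u_{m_a} \to u_{n_a}$ (labels in $-\Dn{\lambda}$) and $u_{n_a} \to u_{m_{a+1}}$ (labels in $\Dp{\lambda}$) for $a = 1, \ldots, t-1$, and finally the segment $u_{m_t} \to u_N$ (all labels in $-\Dn{\lambda}$, corresponding to the $d = 1$ block). Under the index reversal $x_k = u_{m_{t+1-k}}$, $y_k = u_{n_{t-k}}$ (with $s = t$) used in defining $\Xi(A)$, these sub-paths become precisely $w \to \kappa(\eta)$, $x_{k+1} \to y_k$, $y_k \to x_k$ for $k = 1, \ldots, s-1$, and $\iota(\eta) \to u$, which is exactly the pattern of summands in the definition of $\xi(u, \eta, w)$.

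The main step is to verify that each such sub-path realizes a $\xRightarrow{(\lambda, +)}$- or $\xRightarrow{(\lambda, -)}$-path in the sense of Definition~\ref{def:integrality}. As observed already in the proof of Theorem~\ref{thm:im_forgetful}, the $<$-increasing order of the $\beta_{j_k}^{\sL}$, together with the explicit description of $<$ on $\Daf{\lambda}$ and the definition of $\prec_{\vtl}$ on $w(\lambda)\Delta^{+}$, forces the labels $|\gamma_{j_k}^{\sL}|$ within each sub-path to lie either in $\Dp{\lambda}$ or in $\Dn{\lambda}$ and to appear in $\vtl$-decreasing order. Hence each sub-path is a label-increasing directed path in $\QBG(W)$ with respect to the reflection order $\vtl^{\ast}$ on $\Delta^{+}$; by the shellability property (Theorem~\ref{thm:shellability}) applied to $\vtl^{\ast}$, it is a shortest directed path between its endpoints, so its weight coincides with the well-defined quantity $\wt(\cdot \Rightarrow \cdot)$ joining those endpoints.

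Summing the weights of all sub-paths then yields
$$\down(A) = \wt(\bp(A)) = \wt(w \Rightarrow \kappa(\eta)) + \sum_{k=1}^{s-1}\bigl(\wt(x_{k+1} \Rightarrow y_k) + \wt(y_k \Rightarrow x_k)\bigr) + \wt(\iota(\eta) \Rightarrow u) = \xi(u, \eta, w),$$
as desired. No genuine obstacle is expected: the only delicate point is the bookkeeping required by the index reversal between $\bp(A)$ and $\Xi(A)$, and the substantive combinatorial content (identifying each sub-path with a $\xRightarrow{(\lambda, \pm)}$-path through the ordering on $\Daf{\lambda}$) has essentially been established in the proof of Theorem~\ref{thm:im_forgetful}, so the proof will be a brief invocation of that argument combined with the additivity of $\wt$ on $\QBG(W)$-paths.
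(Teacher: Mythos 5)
Your argument is correct, and it is essentially the reasoning the paper leaves implicit: the paper offers no proof, stating only that the lemma is ``obvious by the definitions'' of $\down(A)$ and $\xi(u,\eta,w)$. Your write-up supplies exactly the intended justification—$\down(A)=\wt(\bp(A))$, the block decomposition of $\bp(A)$ matching the summands of $\xi(u,\eta,w)$ after the index reversal, and the identification of each block (label-increasing for the reversed reflection order) as a shortest path via Theorem~\ref{thm:shellability}, so that its weight is the well-defined $\wt(\cdot\Rightarrow\cdot)$—so there is nothing to add.
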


Next, we consider the relation between $\height(A)$ and $\Deg_{w}(\eta)$. 


\begin{lemma} \label{lem:height_to_deg}
Let $w \in W$. 
For $A \in \CA(w, \Gamma_{\vtl}(\lambda))$, we write $\Xi(A) = \eta$. 
Then, we have $\height(A) = - \Deg_{w}(\eta)$. 
\end{lemma}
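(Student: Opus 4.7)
The plan is to prove the equality by a direct computation, matching $\height(A)$ term-by-term with $-\Deg_w(\Xi(A))$ using the chunk decomposition of $\bp(A)$ that is implicit in the construction of $\Xi(A)$ in Section~\ref{sec:forgetful}. An inductive proof on $|A|$ parallel to Proposition~\ref{prop:forgetful_wt} is also available (case-split on whether $d_{j_s} = d_{j_{s-1}}$ and whether $d_{j_s} = 1$), but the direct approach is more transparent because both $\height(A)$ and $-\Deg_w(\Xi(A))$ can be expressed as the same sum indexed by the quantum edges of $\bp(A)$.

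The first step is to rewrite $\wti{l_{j_k}}$ in terms of the ratios $d_{j_k}$: directly from the definitions, $\wti{l_{j_k}} = \pair{\lambda}{\overline{(\beta_{j_k}^{\sL})^\vee}} - \deg((\beta_{j_k}^{\sL})^\vee) = (1 - d_{j_k})\pair{\lambda}{\overline{(\beta_{j_k}^{\sL})^\vee}}$. Combined with the identity $|\gamma|^\vee = \sgn(\gamma)\gamma^\vee$, this yields
\begin{equation}
\height(A) = \sum_{j_k \in A^-} (1 - d_{j_k})\pair{\lambda}{|\gamma_{j_k}^{\sL}|^\vee}.
\end{equation}
Next, I would exploit the chunk decomposition of $\bp(A)$. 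With the conventions $m_0 := 0$ and $m_{t+1} := s$, the path $\bp(A)$ splits into $t+1$ contiguous subpaths, where chunk $a$ (for $a = 0, 1, \ldots, t$) runs from $u_{m_a}$ to $u_{m_{a+1}}$ and all its labels have common ratio $D_a := d_{j_{m_a + 1}}$, with $D_0 = 0$ and $D_t = 1$. By the definition of the affine reflection order $<$ on $\Daf{\lambda}$, the labels within each chunk appear in $\vtr$-increasing order, so by the shellability property (Theorem~\ref{thm:shellability}) each chunk subpath is a shortest path in $\QBG(W)$ between its endpoints; consequently its $\wt$-weight (sum of coroots over quantum edges) coincides with $\wt$ of the corresponding $\Rightarrow$-arrow in the interpolated QLS path $\Xi(A)$.

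Finally, using $\sigma_k = 1 - D_{t-k}$ and noting that the initial term $-\pair{\lambda}{\wt(w \Rightarrow \kappa(\Xi(A)))}$ in $\Deg_w$ carries the implicit coefficient $1 - D_0 = 1$, a straightforward reindexing yields
\begin{equation}
-\Deg_w(\Xi(A)) = \sum_{a = 0}^{t} (1 - D_a) \sum_{\substack{j_k \in A^- \\ m_a + 1 \le k \le m_{a+1}}} \pair{\lambda}{|\gamma_{j_k}^{\sL}|^\vee},
\end{equation}
where the chunk $a = t$ contributes zero because $1 - D_t = 0$. Aggregating over chunks reproduces exactly the expression for $\height(A)$ derived above, completing the proof. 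The main bookkeeping obstacle will be the careful handling of the boundary conventions (the implicit coefficient on the initial term of $\Deg_w$, and the absence of the $a = t$ chunk in the $\sigma_k$-sum) as well as the degenerate cases $m_1 = 0$ and $m_t = s$, all of which follow the conventions already fixed in the construction of $\Xi(A)$.
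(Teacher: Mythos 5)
Your proposal is correct and follows essentially the same route as the paper's proof: both rewrite $\height(A)$ via $\wti{l_{j_k}} = (1-d_{j_k})\pair{\lambda}{\overline{(\beta_{j_k}^{\sL})^{\vee}}}$ and match it chunk-by-chunk against the definition of $\Deg_{w}(\Xi(A))$, with the ratio-$1$ chunk contributing zero because of the factor $1-D_{t}=0$. The only point to tighten is that shellability should be applied to the two label-monotone segments of each chunk separately (from $u_{m_a}$ to $u_{n_a}$ with labels in $\Dn{\lambda}$, then from $u_{n_a}$ to $u_{m_{a+1}}$ with labels in $\Dp{\lambda}$), since the whole chunk is not label-monotone and it is these two $\Rightarrow$-arrows, not the chunk as a single path, that enter $\Deg_{w}$ --- exactly as in the paper's displayed paths.
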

\begin{proof}
We use the notation of Section~\ref{sec:forgetful}. 
In particular, recall that 
\begin{equation}
\begin{split}
\Xi(A) &= (u_{m_{t}}, u_{m_{t-1}}, \ldots, u_{m_{1}}; u_{n_{t-1}}, u_{n_{t-2}}, \ldots, u_{n_{1}}; \\ 
& \hspace{8mm} 0, 1-d_{j_{m_{t}}}, 1-d_{j_{m_{t-1}}}, \ldots, 1-d_{j_{m_{2}}}, 1). 
\end{split}
\end{equation}
Then, by the definition of $\Deg_{w}(\eta)$, we have 
\begin{equation}
\begin{split}
\Deg_{w}(\eta) &= - \pair{\lambda}{\wt(w \Rightarrow u_{m_{1}})} \\
& \hspace{5mm} - \sum_{k = 1}^{t-1} (1-d_{j_{m_{k+1}}}) \pair{\lambda}{\wt(u_{m_{k}} \Rightarrow u_{n_{k}}) + \wt(u_{n_{k}} \Rightarrow u_{m_{k+1}})}. 
\end{split}
\end{equation}
Let $k = 1, \ldots, t-1$, and consider the following directed paths in $\QBG(W)$: 
\begin{equation}
u_{m_{k}} \xrightarrow{|\gamma_{j_{m_{k}+1}}^{\sL}|} u_{m_{k}+1} \xrightarrow{|\gamma_{j_{m_{k}+2}}^{\sL}|} \cdots \xrightarrow{|\gamma_{j_{n_{k}}}^{\sL}|} u_{n_{k}}, \label{eq:path1} 
\end{equation}
\begin{equation}
u_{n_{k}} \xrightarrow{|\gamma_{j_{n_{k}+1}}^{\sL}|} u_{n_{k}+1} \xrightarrow{|\gamma_{j_{n_{k}+2}}^{\sL}|} \cdots \xrightarrow{|\gamma_{j_{m_{k+1}}}^{\sL}|} u_{m_{k+1}}. \label{eq:path2} 
\end{equation}
It follows that 
\begin{align}
\wt(u_{m_{k}} \Rightarrow u_{n_{k}}) &= \sum_{\substack{m_{k}+1 \le p \le n_{k} \\ u_{p-1} \rightarrow u_{p} \text{ is a quantum edge}}} |\gamma_{j_{p}}^{\sL}|^{\vee}, \\
\wt(u_{n_{k}} \Rightarrow u_{m_{k+1}}) &= \sum_{\substack{n_{k}+1 \le p \le m_{k+1} \\ u_{p-1} \rightarrow u_{p} \text{ is a quantum edge}}} |\gamma_{j_{p}}^{\sL}|^{\vee}. 
\end{align}
For $m_{k}+1 \le p \le m_{k+1}$, we see that 
\begin{align}
(1 - d_{j_{m_{k+1}}}) \pair{\lambda}{|\gamma_{j_{p}}^{\sL}|^{\vee}} &= \sgn(\gamma_{j_{p}}^{\sL}) (1 - d_{j_{m_{k+1}}}) \pair{\lambda}{(\gamma_{j_{p}}^{\sL})^{\vee}} \\ 
&= \sgn(\gamma_{j_{p}}^{\sL}) \left( 1 - \frac{\deg((\beta_{j_{p}}^{\sL})^{\vee})}{\pair{\lambda}{\overline{(\beta_{j_{p}}^{\sL})^{\vee}}}} \right) \pair{\lambda}{\overline{(\beta_{j_{p}}^{\sL})^{\vee}}} \\ 
&= \sgn(\gamma_{j_{p}}^{\sL}) (\pair{\lambda}{\overline{(\beta_{j_{p}}^{\sL})^{\vee}}} - \deg((\beta_{j_{p}}^{\sL})^{\vee})). 
\end{align}
Hence we deduce that 
\begin{align}
& \sum_{k=1}^{t-1} (1 - d_{j_{m_{k+1}}}) \pair{\lambda}{\wt(u_{m_{k}} \Rightarrow u_{n_{k}}) + \wt(u_{n_{k}} \Rightarrow u_{m_{k+1}})} \\ 
&= \sum_{\substack{m_{1}+1 \le p \le m_{t} \\ u_{p-1} \rightarrow u_{p} \text{ is a quantum edge}}} (1 - d_{j_{m_{k+1}}}) \pair{\lambda}{|\gamma_{j_{p}}^{\sL}|^{\vee}} \\ 
&= \sum_{\substack{m_{1}+1 \le p \le m_{t} \\ u_{p-1} \rightarrow u_{p} \text{ is a quantum edge}}} \sgn(\gamma_{j_{p}}^{\sL}) (\pair{\lambda}{(\gamma_{j_{p}}^{\sL})^{\vee}} - \deg((\beta_{j_{p}}^{\sL})^{\vee})). 
\end{align}

Also, consider the following directed path in $\QBG(W)$:
\begin{equation}
w \xrightarrow{|\gamma_{j_{1}}^{\sL}|} u_{1} \xrightarrow{|\gamma_{j_{2}}^{\sL}|} \cdots \xrightarrow{|\gamma_{j_{m_{1}}}^{\sL}|} u_{m_{1}}. 
\end{equation}
Since $w \xRightarrow{(\lambda, +)} u_{m_{1}} = \kappa(\eta)$, 
it follows that $\sgn(\gamma_{j_{p}}^{\sL}) = 1$ and $\pair{\lambda}{|\gamma_{j_{p}}^{\sL}|^{\vee}} = \pair{\lambda}{(\gamma_{j_{p}}^{\sL})^{\vee}}$ for $1 \le p \le m_{1}$. 
Also, we see that $\deg((\beta_{j_{p}}^{\sL})^{\vee}) = 0$ for $1 \le p \le m_{1}$. Hence we deduce that 
\begin{align}
& \pair{\lambda}{\wt(w \Rightarrow \kappa(\eta))} \\ 
&= \sum_{\substack{1 \le p \le m_{1} \\ u_{p-1} \rightarrow u_{p} \text{ is a quantum edge}}} \pair{\lambda}{|\gamma_{j_{p}}^{\sL}|^{\vee}}\\ 
&= \sum_{\substack{1 \le p \le m_{1} \\ u_{p-1} \rightarrow u_{p} \text{ is a quantum edge}}} \sgn(\gamma_{j_{p}}^{\sL}) (\pair{\lambda}{(\gamma_{j_{p}}^{\sL})^{\vee}} - \deg((\beta_{j_{p}}^{\sL})^{\vee})). 
\end{align}

Finally, observe that $\pair{\lambda}{(\gamma_{j_{p}}^{\sL})^{\vee}} - \deg((\beta_{j_{p}}^{\sL})^{\vee}) = 0$ for $m_{t}+1 \le p \le s$. 
Therefore, we conclude that 
\begin{align}
\Deg_{w}(\eta) &= -\sum_{\substack{1 \le p \le m_{t} \\ u_{p-1} \rightarrow u_{p} \text{ is a quantum edge}}} \sgn(\gamma_{j_{p}}^{\sL}) (\pair{\lambda}{(\gamma_{j_{p}}^{\sL})^{\vee}} - \deg((\beta_{j_{p}}^{\sL})^{\vee})) \\ 
&= -\sum_{\substack{1 \le p \le s \\ u_{p-1} \rightarrow u_{p} \text{ is a quantum edge}}} \sgn(\gamma_{j_{p}}^{\sL}) (\pair{\lambda}{(\gamma_{j_{p}}^{\sL})^{\vee}} - \deg((\beta_{j_{p}}^{\sL})^{\vee})) \\ 
&= -\height(A), 
\end{align}
as desired. This proves the lemma. 
\end{proof}

\begin{example}
Assume that $\Fg$ is of type $A_{2}$. Let $\lambda = -\vpi_{1}+2\vpi_{2}$ and $w = s_{1}$. In Table~\ref{tab:xi_deg} below, we have given the list of $\xi(\ed(A), \Xi(A), s_{1})$ and $\Deg_{s_{1}}(\Xi(A))$ for all $A \in \CA(s_{1}, \Gamma_{\vtl}(\lambda))$. 
\begin{table}[ht]
\centering
\caption{The list of $\xi(\ed(A), \Xi(A), s_{1})$ and $\Deg_{s_{1}}(\Xi(A))$ for all $A \in \CA(s_{1}, \Gamma_{\vtl}(\lambda)$}
\label{tab:xi_deg}
\begin{tabular}{|c||cc|} \hline
$A$ & $\xi(\ed(A), \Xi(A), s_{1})$ & $\Deg_{s_{1}}(\Xi(A))$ \\ \hline 
$\emptyset$ & 0 & $0$ \\ 
$\{1\}$ & $0$ & $0$ \\ 
$\{2\}$ & $0$ & $0$ \\ 
$\{3\}$ & $0$ & $0$ \\
$\{4\}$ & $\alpha_{1}^{\vee}$ & $0$ \\ 
$\{1, 3\}$ & $\alpha_{2}^{\vee}$ & $-1$ \\ 
$\{1, 4\}$ & $0$ & $0$ \\ 
$\{2, 3\}$ & $0$ & $0$ \\ 
$\{2, 4\}$ & $\alpha_{1}^{\vee}$ & $0$ \\ 
$\{3, 4\}$ & $0$ & $0$ \\ 
$\{1, 3, 4\}$ & $\alpha_{1}^{\vee} + \alpha_{2}^{\vee}$ & $-1$ \\ 
$\{2, 3, 4\}$ & $\alpha_{1}^{\vee}$ & $0$ \\ \hline
\end{tabular}
\end{table}

By comparing Table~\ref{tab:xi_deg} and Table~\ref{tab:admissible}, we see that $\xi(\ed(A), \Xi(A), s_{1}) = \down(A)$ and $\Deg_{s_{1}}(\Xi(A)) = -\height(A)$ for all $A \in \CA(s_{1}, \Gamma_{\vtl}(\lambda))$. 
\end{example}

Finally, we obtain the following description of the identity of Chevalley type for graded characters 
in terms of interpolated QLS paths. 

\begin{theorem} \label{thm:gch_Chevalley}
Let $x \in W_{\af}$, and write it 
as $x = w t_{\xi}$, with $w \in W$ and $\xi \in Q^{\vee}$. 
Let $\mu \in P^{+}$ and $\lambda \in P$ be such that $\mu + \lambda \in P^{+}$. 
Then, the follwing equality holds: 
\begin{equation}
\begin{split}
& \gch V_{x}^{-}(\mu + \lambda) =\\ 
& \sum_{\substack{\eta \in \IQLS(\lambda) \\ w \xRightarrow{(\lambda, +)} \kappa(\eta)}} \sum_{\substack{u \in W \\ \iota(\eta) \xRightarrow{(\lambda, -)} u}} \sum_{\bchi \in \bPar(\lambda)} (-1)^{\nega(\eta) + \ell(u) - \ell(\iota(\eta))} q^{\Deg_{w}(\eta) - \pair{\lambda}{\xi} - |\bchi|} \\ 
& \hspace*{60mm} \times e^{\wt(\eta)} \gch V_{ut_{\xi + \xi(u, \eta, w) + \iota(\bchi)}}^{-}(\mu). 
\end{split}
\end{equation}
\end{theorem}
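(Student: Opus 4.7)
My plan is to apply Theorem~\ref{thm:PC-type_formula} with the suitable $\lambda$-chain $\Gamma = \Gamma_{\vtl}(\lambda)$ associated to the chosen reflection order ${\vtl} \in \RO(\lambda, \Delta^{+})$, and then to translate the resulting sum over admissible subsets into a sum over interpolated QLS paths via the forgetful map $\tXi$ constructed in Section~\ref{sec:forgetful}. Since $\Gamma_{\vtl}(\lambda)$ is by construction a reduced $\lambda$-chain, Theorem~\ref{thm:PC-type_formula} directly yields the expansion
\begin{equation*}
\gch V_{x}^{-}(\mu + \lambda) = \sum_{A \in \CA(w, \Gamma_{\vtl}(\lambda))} \sum_{\bchi \in \bPar(\lambda)} (-1)^{n(A)} q^{-\height(A) - \pair{\lambda}{\xi} - |\bchi|} e^{\wt(A)} \gch V_{\ed(A) t_{\xi + \down(A) + \iota(\bchi)}}^{-}(\mu),
\end{equation*}
which I would take as the starting point.

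Next, I would reindex the outer sum through the forgetful map. By Theorem~\ref{thm:inj_forgetful}, $\tXi$ is injective, and by Theorem~\ref{thm:im_forgetful} its image coincides with the set $\CS$ of pairs $(\eta, u) \in \IQLS(\lambda) \times W$ subject to $w \xRightarrow{(\lambda, +)} \kappa(\eta)$ and $\iota(\eta) \xRightarrow{(\lambda, -)} u$. Hence the assignment $A \mapsto (\Xi(A), \ed(A))$ realizes a bijection $\CA(w, \Gamma_{\vtl}(\lambda)) \xrightarrow{\sim} \CS$, enabling the outer sum to be rewritten as a double sum over $\eta \in \IQLS(\lambda)$ and $u \in W$ constrained precisely as in the statement of the theorem.

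The last step is to translate each of the statistics appearing under the summation into its counterpart for $(\eta, u) = \tXi(A)$, using the dictionary assembled in Section~\ref{subsec:forgetful_statistics}: namely, $\ed(A) = u$ holds by Definition~\ref{def:forgetful}; $\wt(A) = \wt(\eta)$ by Proposition~\ref{prop:forgetful_wt}; $\down(A) = \xi(u, \eta, w)$ by Lemma~\ref{lem:down_to_xi}; $-\height(A) = \Deg_{w}(\eta)$ by Lemma~\ref{lem:height_to_deg}; and finally $(-1)^{n(A)} = (-1)^{\nega(\eta) + \ell(u) - \ell(\iota(\eta))}$ by Corollary~\ref{cor:forgetful_n}. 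Substituting these equalities term by term into \eqref{eq:PC-type_formula} transforms the right-hand side into exactly the expression asserted in the theorem.

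Since every ingredient has already been established, no genuine obstacle remains; the proof is essentially a mechanical reindexing combined with a substitution of statistics. The only delicate points are that only the parity of $n(A)$ enters in the sign, so the congruence supplied by Corollary~\ref{cor:forgetful_n} is sufficient (rather than an on-the-nose equality), and that the resulting identity must be independent of the auxiliary choice of ${\vtl} \in \RO(\lambda, \Delta^{+})$, which is automatic because neither side of the claimed equality involves $\vtl$.
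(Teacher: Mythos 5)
Your proposal is correct and follows essentially the same route as the paper: apply Theorem~\ref{thm:PC-type_formula} with $\Gamma = \Gamma_{\vtl}(\lambda)$, reindex via the bijection of Theorems~\ref{thm:im_forgetful} and \ref{thm:inj_forgetful}, and substitute the statistics using Corollary~\ref{cor:forgetful_n}, Proposition~\ref{prop:forgetful_wt}, and Lemmas~\ref{lem:down_to_xi} and \ref{lem:height_to_deg}. Your remark that only the parity of $n(A)$ is needed for the sign is exactly why the congruence in Corollary~\ref{cor:forgetful_n} suffices, as in the paper's own computation.
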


\begin{proof}
By Theorems~\ref{thm:im_forgetful} and \ref{thm:inj_forgetful}, Corollary~\ref{cor:forgetful_n}, Proposition~\ref{prop:forgetful_wt}, Lemmas~\ref{lem:down_to_xi} and \ref{lem:height_to_deg}, 
and Theorem~\ref{thm:PC-type_formula}, 
we can compute as follows: 
\begin{align}
\begin{split}
& \gch V_{x}^{-}(\mu + \lambda) \\ 
&= \sum_{A \in \CA(w, \Gamma)} \sum_{\bchi \in \bPar(\lambda)} (-1)^{n(A)} q^{-\height(A) - \pair{\lambda}{\xi} - |\bchi|} e^{\wt(A)} \gch V_{\ed(A)t_{\xi + \down(A) + \iota(\bchi)}}^{-}(\mu) 
\end{split} \\ 
\begin{split}
&= \sum_{A \in \CA(w, \Gamma)} \sum_{\bchi \in \bPar(\lambda)} (-1)^{\nega(\Xi(A))) + \ell(\iota(\Xi(A))) - \ell(\iota(\Xi(A)))} q^{\Deg_{w}(\Xi(A)) - \pair{\lambda}{\xi} - |\bchi|} \\ 
& \hspace{60mm} \times e^{\wt(\Xi(A))} \gch V_{\ed(A)t_{\xi + \xi(\ed(A), \Xi(A), w) + \iota(\bchi)}}^{-}(\mu) 
\end{split} \\ 
\begin{split}
& =\sum_{\substack{\eta \in \IQLS(\lambda) \\ w \xRightarrow{(\lambda, +)} \kappa(\eta)}} \sum_{\substack{u \in W \\ \iota(\eta) \xRightarrow{(\lambda, -)} u}} \sum_{\bchi \in \bPar(\lambda)} (-1)^{\nega(\eta) + \ell(u) - \ell(\iota(\eta))} q^{\Deg_{w}(\eta) - \pair{\lambda}{\xi} - |\bchi|} \\ 
& \hspace*{60mm} \times e^{\wt(\eta)} \gch V_{ut_{\xi + \xi(u, \eta, w) + \iota(\bchi)}}^{-}(\mu), 
\end{split}
\end{align}
as desired. This proves the theorem. 
\end{proof}


\begin{thebibliography}{99}
\bibitem{BB}
A. Bj\"{o}rner and F. Brenti, 
Combinatorics of Coxeter Groups, 
volume 231 of Graduate Texts in Mathematics, Springer, New York, 2005. 

\bibitem{BFP}
F. Brenti, S. Fomin, and A. Postnikov, 
Mixed Bruhat operators and Yang-Baxter equations for Weyl groups, 
\textit{Int. Math. Res. Not.} \textbf{1999} (1999), no. 8, 419--441. 

\bibitem{D}
M. J. Dyer, 
Hecke algebras and shellings of Bruhat intervals, 
\textit{Compos. Math.} \textbf{89} (1993), no. 1, 91--115. 

\bibitem{Kac}
V. G. Kac, 
Infinite Dimensional Lie Algebras, 3rd Edition, 
Cambridge University Press, Cambridge, 1990. 

\bibitem{Kas}
M. Kashiwara, 
Crystal bases of modified quantized enveloping algebra, 
\textit{Duke Math. J.} \textbf{73} (1994), no. 2, 383--413. 

\bibitem{Kou}
T. Kouno, 
Combinatorics of the quantum alcove model, Ph.D. thesis, Tokyo Institute of Technology, 2021. 

\bibitem{KLN}
T. Kouno, C. Lenart, and S. Naito, 
New structure on the quantum alcove model with applications to representation theory and Schubert calculus, 
\textit{J. Comb. Algebra} \textbf{7} (2023), no. 3/4, 347--400. 

\bibitem{KNS}
S. Kato, S. Naito, and D. Sagaki, 
Equivariant $K$-theory of semi-infinite flag manifolds and the Pieri-Chevalley formula, 
\textit{Duke Math. J.} \textbf{169} (2020), 2421--2500. 

\bibitem{LL1}
C. Lenart and A. Lubovsky, 
A generalization of the alcove model and its applications, 
\textit{J. Algebr. Comb.} \textbf{41} (2015), no. 3, 751--783.

\bibitem{LL2}
C. Lenart and A. Lubovsky,
A uniform realization of the combinatorial $R$-matrix for column shape Kirillov-Reshetikhin crystals, 
\textit{Adv. Math.} \textbf{334} (2018), 151--183.

\bibitem{LNS}
C. Lenart, S. Naito, and D. Sagaki, 
A general Chevalley formula for semi-infinite flag manifolds and quantum $K$-theory, 
arXiv:2010.06143, to appear in \textit{Selecta Math. (N.S.)}. 

\bibitem{LNSSS1}
C. Lenart, S. Naito, D. Sagaki, A. Schilling, and M. Shimozono, 
A uniform model for Kirillov-Reshetikhin crystals I: Lifting the parabolic quantum Bruhat graph, 
\textit{Int. Math. Res. Not.} \textbf{2015} (2015), no. 7, 1848--1901. 

\bibitem{LNSSS2}
C. Lenart, S. Naito, D. Sagaki, A. Schilling, and M. Shimozono, 
A uniform model for Kirillov-Reshetikhin crystals II: Alcove model, path model, and $P=X$, 
\textit{Int. Math. Res. Not.} \textbf{2017} (2017), no. 14, 4259--4319. 

\bibitem{LP1}
C. Lenart and A. Postnikov, 
Affine Weyl groups in $K$-theory and representation theory, 
\textit{Int. Math. Res. Not.} \textbf{2007} (2007), no. 12 Art. ID rnm038, 65. 

\bibitem{LS}
C. Lenart and M. Shimozono, 
Equivariant $K$-Chevalley rules for Kac-Moody flag manifolds, 
\textit{Amer. J. Math.} \textbf{136} (2014), 1--39. 

\bibitem{Mac}
I. G. Macdonald, 
Affine Hecke Algebras and Orthogonal Polynomials, 
volume 157 of Cambridge Tracts in Mathematics, 
Cambridge University Press, Cambridge, 2003. 

\bibitem{NNS}
S. Naito, F. Nomoto, and D. Sagaki, 
Specialization of nonsymmetric Macdonald polynomials at $t = \infty$ and Demazure submodules of level-zero extremal weight modules, 
\textit{Trans. Amer. Math. Soc.} \textbf{370} (2018), no. 4, 2739--2783. 

\bibitem{Papi}
P. Papi, 
A characterization of a special ordering in a root system, 
\textit{Proc. Amer. Math. Soc.} \textbf{120} (1994), no. 3, 661--665. 

\bibitem{P}
A. Postnikov, 
Quantum Bruhat graph and Schubert polynomials, 
\textit{Proc. Amer. Math. Soc.} \textbf{133} (2005), no. 3, 699--709. 

\end{thebibliography}
\end{document}